\documentclass[a4paper,11pt]{article}

\usepackage[margin=1in]{geometry}
\usepackage{amsmath,amssymb,amsthm,mathtools,mathrsfs,bm}
\usepackage{enumitem}
\usepackage{microtype}
\usepackage[pagebackref=true, colorlinks, linkcolor=blue, anchorcolor=red,citecolor=blue]{hyperref}

\numberwithin{equation}{section}

\newtheorem{theorem}{Theorem}[section]
\newtheorem{proposition}[theorem]{Proposition}
\newtheorem{lemma}[theorem]{Lemma}
\newtheorem{corollary}[theorem]{Corollary}

\theoremstyle{remark}
\newtheorem{remark}[theorem]{Remark}

\newcommand{\R}{\mathbb{R}}

\newcommand{\Sd}{\mathbb{S}^{d-1}}
\newcommand{\cP}{\mathcal{P}}
\newcommand{\cL}{\mathcal{L}}
\newcommand{\cD}{\mathcal{D}}
\newcommand{\cR}{\mathcal{R}}
\newcommand{\cA}{\mathcal{A}}

\newcommand{\Var}{\operatorname{Var}}
\newcommand{\Cov}{\operatorname{Cov}}
\newcommand{\Tr}{\operatorname{Tr}}
\newcommand{\Lip}{\operatorname{Lip}}
\newcommand{\dist}{\operatorname{dist}}
\newcommand{\esssup}{\operatorname*{ess\,sup}}

\newcommand{\dd}{\,\mathrm{d}}
\newcommand{\Law}{\mathrm{Law}}
\newcommand{\Wick}[1]{\mathopen{:}#1\mathclose{:}}

\begin{document}

\title{\textbf{Rigidity and Quantitative Stability of the Sliced Wasserstein Deficit}}
\author{Bang-Xian Han\thanks{School of Mathematics, Shandong University, Jinan, 250100, China. Email: hanbx@sdu.edu.cn.}}
\date{\today}

\maketitle

\begin{abstract}
The sliced Wasserstein distance $SW_2(\mu,\nu)$ compares high-dimensional probability measures by averaging one-dimensional optimal transport distances over linear projections. Although sliced Wasserstein distances are now standard computational tools in statistics, imaging, and machine learning, the rigidity and stability questions behind the elementary comparison
\[
 SW_2^2(\mu,\nu)\leq \frac1d W_2^2(\mu,\nu)
\]
have not been systematically studied.

For $\mu,\nu\in\cP_2(\R^d)$ with $\mu\ll\cL^d$,   $d\ge2$,  and the sliced Wasserstein deficit 
\[
 \cD(\mu,\nu):=\frac1d W_2^2(\mu,\nu)-SW_2^2(\mu,\nu),
\]
we prove that $\cD(\mu,\nu)=0$ if and only if the Brenier map $T=\nabla\varphi$ from $\mu$ to $\nu$ is homothetic affine. We also prove an analogous rigidity theorem for general Grassmannian projections.

For quantitative stability, we introduce the sliced Poincar\'e--Korn (SPK) constant. Under a positive SPK spectral gap, we prove a stability estimate for the sliced Wasserstein deficit, up to a one-dimensional Lipschitz scale for the projected monotone transports. For Gaussian measures we compute the sharp SPK constant.
We also show by an example that anisotropic Gaussians give an obstruction: neither a Bakry--\'Emery lower curvature bound nor a usual Poincar\'e inequality alone is sufficient for a global sliced Poincar\'e--Korn inequality.
\end{abstract}

\textbf{Keywords}: sliced Wasserstein distance; optimal transport; rigidity; stability; spectral gap; sliced Poincar\'e--Korn inequality

\textbf{MSC 2020}: 49Q22, 52A40, 60E15 

\tableofcontents
\section{Introduction}
\label{sec:intro}
\subsection{Background}
For probability measures $\mu,\nu\in\cP_2(\R^d)$, their quadratic Wasserstein distance (see, e.g., \cite{Santambrogio2015}) is defined by
\[
 W_2^2(\mu,\nu)
 :=\inf_{\gamma\in\Pi(\mu,\nu)}
 \int_{\R^d\times\R^d}|x-y|^2\,\dd\gamma(x,y)
\]
where $\Pi(\mu,\nu)$ denotes the set of transport plans, which are probability measures on $\R^d\times \R^d$ with marginals $\mu$ and $\nu$. For $\theta\in\Sd$, write
\[
 \pi_\theta(x)=\theta\cdot x,
 \qquad
 \mu_\theta=(\pi_\theta)_\#\mu.
\]
The sliced Wasserstein distance (see \cite{BonneelRabinPeyrePfister2015}) is defined via
\[
 SW_2^2(\mu,\nu)
 :=\int_{\Sd} W_2^2(\mu_\theta,\nu_\theta)\,\dd\sigma(\theta),
\]
where $\sigma$ is the normalized surface measure on $\Sd$. 

Note that
\[
 \int_{\Sd}\theta\otimes\theta\,\dd\sigma(\theta)=\frac1d I_d,
 \qquad
 \int_{\Sd}|\theta\cdot z|^2\,\dd\sigma(\theta)=\frac1d|z|^2.
\]
For every coupling $\gamma\in\Pi(\mu,\nu)$, $(\pi_\theta,\pi_\theta)_\#\gamma$ is also a coupling in $\Pi(\mu_\theta,\nu_\theta)$, so
\[
 W_2^2(\mu_\theta,\nu_\theta)
 \leq
 \int |\theta\cdot(x-y)|^2\,\dd\gamma(x,y).
\]
Averaging in $\theta$ and optimizing over $\gamma$ gives the elementary inequality
\begin{equation*}
\label{eq:deficit-def}
 \cD(\mu,\nu)
 :=
 \frac1d W_2^2(\mu,\nu)-SW_2^2(\mu,\nu)
 \geq0, \tag{SWD}
\end{equation*}
and we call $\cD(\mu,\nu)$ the \emph{sliced Wasserstein deficit}.

If $\mu\ll\cL^d$ and $T=\nabla\varphi$ is the Brenier map from $\mu$ to $\nu$, then
\begin{equation}
\label{eq:deficit-integral}
 \cD(\mu,\nu)
 =
 \int_{\Sd}
 \left[
 \int_{\R^d}|\theta\cdot(T(x)-x)|^2\,\dd\mu(x)
 -W_2^2(\mu_\theta,\nu_\theta)
 \right]
 \dd\sigma(\theta).
\end{equation}
Thus $\cD$ is an average of non-negative one-dimensional optimality gaps.

\medskip
Sliced Wasserstein distances were introduced as computationally tractable
variants of classical Wasserstein distances, by reducing the comparison of high-dimensional
measures to one-dimensional projections. Early developments include the work
of Rabin--Peyr\'e--Delon--Bernot on barycenters and texture mixing
\cite{RabinPeyreDelonBernot2011}; Bonnotte's thesis \cite{Bonnotte2013},
where he proved that $SW_p$ is a metric equivalent to $W_p$ on compactly
supported measures and initiated the study of gradient flows of
\(\mu\mapsto\frac12SW_2^2(\mu,\sigma)\); and the Radon/sliced barycenter
framework of Bonneel--Rabin--Peyr\'e--Pfister
\cite{BonneelRabinPeyrePfister2015}.

\medskip

Subsequent work includes Bayraktar and Guo~\cite{BayraktarGuo2021} about equivalences between Wasserstein-type
metrics and max-sliced Wasserstein distances.
Park and Slep\v{c}ev~\cite{Parkslepcev2023} proved that the sliced Wasserstein
space is complete, and characterized its tangent space. Kitagawa and Takatsu~\cite{kitagawa2024}
introduced a broader family of sliced Monge--Kantorovich metrics,
proved their completeness, separability, and duality, and showed
that these spaces are not geodesic and not bi-Lipschitz equivalent
to the classical Wasserstein space. On the gradient-flow side,
Cozzi and Santambrogio~\cite{cozzi2024} proved
long-time asymptotics for the sliced-Wasserstein flow, including
convergence when the target is Gaussian and showing that the flow
map does not converge to the optimal transport map in general.
More recently, Carlier, Figalli, M\'erigot and Wang~\cite{carlier2025} proved sharp
quantitative comparisons between $SW_1$ and $W_1$ via Radon-transform
techniques, and determined the sharp exponent for the metric
equivalence in any dimension.

\medskip

However, we are not aware of a systematic study of the equality and stability
 in this elementary comparison \eqref{eq:deficit-def}. We
ask whether equality, or approximate equality, in the averaged
one-dimensional optimality gaps forces the high-dimensional Brenier map to be
rigid. This paper proves that zero deficit characterizes homothetic affine
Brenier maps, introduces the SPK inequality needed for stability, and
computes the sharp Gaussian constant.

\medskip

This question is also related to recent work on Poincar\'e--Korn
inequalities. Carrapatoso,
Dolbeault, H\'erau, Mischler and Mouhot \cite{CarrapatosoDolbeaultHerauMischlerMouhot2022} introduced weighted Korn and
Poincar\'e--Korn inequalities, motivated in part by
hypocoercivity and kinetic theory.  Courtade and Fathi \cite{CourtadeFathi2025PK} proved
Gaussian optimality and stability for the Poincar\'e--Korn constant under
moment constraints, showing that near-optimal measures are quantitatively
close to the standard Gaussian. The SPK inequality \eqref{eq:spk-condition}
has a different form: its defect is not the symmetric gradient, but an
averaged ridge-projection conditional variance over one-dimensional
projections. Its null space is the homothetic affine family
\(x\mapsto \lambda x+b\).

\subsection{Main results}

The first theorem identifies equality cases in \eqref{eq:deficit-def}.

\begin{theorem}[Rigidity]
\label{thm:rigidity-main}
Let $d\ge2$, let $\mu,\nu\in\cP_2(\R^d)$, assume $\mu\ll\cL^d$, and let $T=\nabla\varphi$ be the Brenier map from $\mu$ to $\nu$. Then
\[
 \cD(\mu,\nu)=0
 \quad\Longleftrightarrow\quad
 T(x)=\lambda x+b\quad\mu\text{-a.e.}
\]
for some $\lambda\ge0$ and $b\in\R^d$.
\end{theorem}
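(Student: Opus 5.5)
We prove both directions using the integral representation \eqref{eq:deficit-integral}. There $\cD$ is written as the $\sigma$-average of the one-dimensional gaps
\[
 g(\theta):=\int|\theta\cdot(T(x)-x)|^2\dd\mu(x)-W_2^2(\mu_\theta,\nu_\theta)\ge0 .
\]
The coupling $(\pi_\theta,\pi_\theta\circ T)_\#\mu\in\Pi(\mu_\theta,\nu_\theta)$ shows that each $g(\theta)$ is nonnegative.

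\emph{Sufficiency} ($\Leftarrow$). If $T(x)=\lambda x+b$ with $\lambda\ge0$, then for every $\theta$ the projected map is
\[
 \theta\cdot T(x)=\lambda(\theta\cdot x)+\theta\cdot b=:t_\theta(\theta\cdot x).
\]
This map factors through $\pi_\theta$, and $t_\theta(s)=\lambda s+\theta\cdot b$ is nondecreasing on $\R$. The coupling $(\pi_\theta,\pi_\theta\circ T)_\#\mu$ is therefore the graph of a monotone map pushing $\mu_\theta$ to $\nu_\theta$. In one dimension such a coupling is optimal, so $g(\theta)=0$ for every $\theta$, and hence $\cD=0$.

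\emph{Necessity} ($\Rightarrow$). Suppose $\cD=0$. Then $g(\theta)=0$ for $\sigma$-a.e.\ $\theta$.

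First we use continuity to upgrade this to every $\theta$. The map $\theta\mapsto\int|\theta\cdot(T-x)|^2\dd\mu$ is a quadratic form in $\theta$. The map $\theta\mapsto W_2(\mu_\theta,\nu_\theta)$ is continuous, since $W_2(\mu_\theta,\mu_{\theta'})\le|\theta-\theta'|\,(\int|x|^2\dd\mu)^{1/2}$. Hence $g$ is continuous, and $g(\theta)=0$ for all $\theta$.

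Fix $\theta$. Since $g(\theta)=0$, the plan $(\pi_\theta,\pi_\theta\circ T)_\#\mu$ is optimal between $\mu_\theta$ and $\nu_\theta$, so its support is monotone. To pass from this to a pointwise statement, set $u=\pi_\theta$ and $v=\theta\cdot T$. We need, for $\mu\otimes\mu$-a.e.\ pair $(x,y)$,
\begin{equation}\label{eq:plan-monotone}
 \bigl(\theta\cdot(x-y)\bigr)\bigl(\theta\cdot(T(x)-T(y))\bigr)\ge0 .
\end{equation}
This does not follow from cyclical monotonicity of the support alone. It holds because the support of an optimal plan on $\R\times\R$ is a monotone set: if two points $(u_1,v_1),(u_2,v_2)$ of the support had $u_1<u_2$ and $v_1>v_2$, swapping them would strictly decrease the cost, which is the standard $c$-cyclical monotonicity with two points. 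The pairs $(u(x),v(x))$ lie in this support for $\mu$-a.e.\ $x$, which gives \eqref{eq:plan-monotone}.

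Next we integrate over $\theta$. Suppose \eqref{eq:plan-monotone} holds for all $\theta$ in a countable dense set and all $(x,y)$ outside a $\mu\otimes\mu$-null set. By continuity in $\theta$ it then holds for every $\theta\in\Sd$. Now fix such a pair $(x,y)$ with $x\ne y$, and write $a=x-y$ and $c=T(x)-T(y)$. The condition says that $(\theta\cdot a)(\theta\cdot c)\ge0$ for all $\theta$.

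This forces $c=\lambda a$ with $\lambda\ge0$. Indeed, if $c$ were not a nonnegative multiple of $a$, we could pick $\theta$ with $\theta\cdot a>0>\theta\cdot c$. This is possible because $d\ge2$: the open half-spaces $\{\theta\cdot a>0\}$ and $\{\theta\cdot c<0\}$ intersect unless $c=\lambda a$ with $\lambda\ge0$.

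So for $\mu\otimes\mu$-a.e.\ $(x,y)$,
\[
 T(x)-T(y)=\lambda(x,y)(x-y),\qquad \lambda(x,y)\ge0 .
\]

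The final step, which we expect to be the main obstacle, is showing that $\lambda$ is constant. Fix a generic base point $y_0$ and write $T(x)=T(y_0)+\lambda(x)(x-y_0)$. Choose a second generic point $y_1$ with $y_1-y_0$ not parallel to $x-y_0$; this is possible since $\mu\ll\cL^d$ and $d\ge2$. Then
\[
 \lambda(x)(x-y_0)-\lambda(y_1)(y_1-y_0)=\lambda(x,y_1)(x-y_1).
\]
Write $x-y_1=(x-y_0)-(y_1-y_0)$. Since $x-y_0$ and $y_1-y_0$ are linearly independent, comparing coefficients gives
\[
 \lambda(x)=\lambda(x,y_1)=\lambda(y_1) .
\]
Hence $\lambda$ is constant $\mu$-a.e. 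The set of $x$ lying on the line through $y_0$ and $y_1$ is $\mu$-null, so this covers $\mu$-a.e.\ $x$. Therefore $T(x)=\lambda x+b$ $\mu$-a.e. with $\lambda\ge0$, which is what we wanted. Setting up these null-set manipulations carefully, in particular keeping the exceptional sets null and the choices of $y_0,y_1$ generic, is where most of the care goes.
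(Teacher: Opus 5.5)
Your proof is correct and follows essentially the same route as the paper. You show that the one-dimensional gaps $g(\theta)$ vanish, use two-point cyclical monotonicity of the optimal projected plan to get the directional sign condition, apply the half-space argument to force $T(x)-T(y)$ to be a nonnegative multiple of $x-y$, and finish with a three-point argument (with fixed generic base points $y_0,y_1$ instead of the paper's a.e.\ triples) to make the factor constant; your upgrade from $\sigma$-a.e.\ $\theta$ to every $\theta$, via continuity of $g$ and a countable dense set, is a valid substitute for the paper's Fubini step.
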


The proof is given in Section~\ref{sec:rigidity}. The argument does not assume smoothness of $T$ and avoids differentiating the Brenier potential. For a random variable $X \sim \mu$, zero deficit says that the projected coupling
 $(\theta\cdot X,\theta\cdot T(X))$
is one-dimensionally optimal for almost every direction. One-dimensional cyclical monotonicity then implies directional monotonicity for almost every $\theta$. A  geometric lemma forces every chord $T(x)-T(y)$ to be a non-negative multiple of $x-y$, and a three-point argument makes this multiple constant.

\medskip

For stability we introduce the ridge defect
\begin{equation}
\label{eq:ridge-def-intro}
 \cR_\mu(u)
 :=
 \int_{\Sd}
 \mathbb E_\mu\bigl[\Var(\theta\cdot u(X)\mid \theta\cdot X)\bigr]
 \,\dd\sigma(\theta)
\end{equation}
for vector fields $u\in L^2(\mu;\R^d)$. Equivalently,
\begin{equation}
\label{eq:ridge-inf-intro}
 \cR_\mu(u)
 =
 \int_{\Sd}
 \inf_{h\in L^2(\mu_\theta)}
 \int_{\R^d}|\theta\cdot u(x)-h(\theta\cdot x)|^2\,\dd\mu(x)
 \dd\sigma(\theta).
\end{equation}

Let $\cA_d$ be the rigidity class from Theorem~\ref{thm:rigidity-main}:
\[
 \cA_d:=\{x\mapsto \lambda x+b:\lambda\in\R,
 \ b\in\R^d\}.
\]
Here and below, following Otto's calculus, define the tangent space at $\mu\in \cP_2(\R^d)$ by
\[
 {\rm Tan}_\mu:=\overline{\{\nabla \varphi: \varphi\in C_c^\infty(\R^d)\}}^{L^2(\mu;\R^d)}.
\]
The \emph{sliced Poincar\'e--Korn constant} is
\begin{equation}
\label{eq:sk-constant-intro}
 \kappa_{\mathrm{SPK}}(\mu)
 :=
 \inf_{u\in {\rm Tan}_\mu}
 \frac{\cR_\mu(u)}{\dist_{L^2(\mu)}^2(u,\cA_d)},
\end{equation}
where terms with zero denominator are omitted. 
This quantity  \(\kappa_{\rm SPK}(\mu)\) measures the failure of a
vector field to look one-dimensional after projection. Its kernel consists
precisely of the fields whose projected components depend only on the
corresponding projected variable. On tangent fields this kernel reduces to
homothetic affine maps, so the SPK inequality is the spectral-gap form of
the rigidity statement.

\medskip

The quantitative estimate under the SPK hypothesis is stated with an explicit
tangent-space defect.  The only  regularity scale
entering the estimate is \emph{projected Lipschitz scale}
\[
\Lambda=\esssup_{\theta\in\Sd}\Lip(\tau_\theta).
\]

\begin{theorem}[Stability under SPK with tangent defect]
\label{thm:spk-main}
Let $\mu\ll\cL^d$, let $T=\nabla\varphi$ be the Brenier map from $\mu$ to $\nu$, and for $\sigma$-a.e. $\theta$ let $\tau_\theta$ be the monotone transport map from $\mu_\theta$ to $\nu_\theta$. 

If $\kappa_{\mathrm{SPK}}(\mu)>0$ and $\Lambda<\infty$, then
\begin{equation}
\label{eq:spk-main-stab}
 \dist_{L^2(\mu)}(T,\cA_d)
 \leq
 \sqrt{\frac{\Lambda}{\kappa_{\mathrm{SPK}}(\mu)}}\,
 \cD(\mu,\nu)^{1/2}
 +
 \left(1+\frac1{\sqrt{d\,\kappa_{\mathrm{SPK}}(\mu)}}\right)
 \dist_{L^2(\mu)}(T,{\rm Tan}_\mu).
\end{equation}
In particular, if $T\in {\rm Tan}_\mu$, then
\begin{equation}
\label{eq:spk-main-stab-tangent}
 \dist_{L^2(\mu)}^2(T,\cA_d)
 \leq
 \frac{\Lambda}{\kappa_{\mathrm{SPK}}(\mu)}\,
 \cD(\mu,\nu).
\end{equation}
\end{theorem}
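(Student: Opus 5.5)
The plan is to combine a pointwise one-dimensional stability bound for each projection with the SPK inequality. The one-dimensional bound will show that each projected optimality gap controls a conditional variance, so that the deficit controls the ridge defect of $T$:
\[
\cD(\mu,\nu)\;\ge\;\frac1\Lambda\,\cR_\mu(T).
\]
The SPK inequality then converts ridge defect into distance to $\cA_d$. The tangent-defect term is handled by a triangle-inequality argument, using that $\cR_\mu^{1/2}$ is a seminorm.

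\emph{Step 1 (one-dimensional gap bound).} Fix $\theta$ and write $s=\theta\cdot X$, $t=\theta\cdot T(X)$ with $X\sim\mu$. Then $s\sim\mu_\theta$ and $t\sim\nu_\theta$. Write $\tau=\tau_\theta=\psi'$ with $\psi$ convex, so that $0\le\psi''\le\Lambda$ and hence the conjugate $\psi^*$ is $(1/\Lambda)$-strongly convex.
\begin{itemize}[leftmargin=*]
\item Since $t$ and $\tau(s)$ have the same law, expanding the squares shows that the gap in \eqref{eq:deficit-integral} equals
\[
G_\theta:=\mathbb E|t-s|^2-\mathbb E|\tau(s)-s|^2=2\,\mathbb E\bigl[s\tau(s)-st\bigr].
\]
\item Fenchel equality gives $s\tau(s)=\psi(s)+\psi^*(\tau(s))$. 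Using again that $t$ and $\tau(s)$ have the same law, $\mathbb E\psi^*(\tau(s))=\mathbb E\psi^*(t)$, so
\[
G_\theta=2\,\mathbb E\bigl[\psi(s)+\psi^*(t)-st\bigr].
\]
\item The integrand is the Fenchel--Young gap at $(s,t)$. It vanishes at $t=\tau(s)=(\psi^*)'^{-1}(s)$, so strong convexity of $\psi^*$ gives $\psi(s)+\psi^*(t)-st\ge\frac1{2\Lambda}|t-\tau(s)|^2$.
\item Since $\tau(s)$ is $\sigma(s)$-measurable, we get
\[
G_\theta\ge\frac1\Lambda\,\mathbb E|t-\tau(s)|^2\ge\frac1\Lambda\,\mathbb E\bigl[\Var(t\mid s)\bigr].
\]
\end{itemize}
Averaging over $\theta$ yields $\cD(\mu,\nu)\ge\Lambda^{-1}\cR_\mu(T)$.

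The delicate points are in this step. One must justify the Fenchel manipulation with possibly infinite values of $\psi^*$ off the support. One must also handle $\Lip(\tau_\theta)$ only holding $\mu_\theta$-a.e.; this can be done by extending $\tau$ to a $\Lambda$-Lipschitz monotone map on $\R$ before conjugating. I expect this to be the main technical obstacle. If the conjugation is awkward, I would instead argue directly: since $\tau$ is monotone and $\Lambda$-Lipschitz, $\tau^{-1}$ is $(1/\Lambda)$-co-monotone. Then $\mathbb E[(s-\tau^{-1}(t))(\tau(s)-t)]\ge\Lambda^{-1}\mathbb E|\tau(s)-t|^2$, and $\mathbb E[\tau^{-1}(t)(\tau(s)-t)]=0$ by equality of laws.

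\emph{Step 2 (tangent case).} If $T\in{\rm Tan}_\mu$, the definition \eqref{eq:sk-constant-intro} gives $\cR_\mu(T)\ge\kappa_{\mathrm{SPK}}(\mu)\,\dist^2_{L^2(\mu)}(T,\cA_d)$. Combining with Step 1 yields \eqref{eq:spk-main-stab-tangent}.

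\emph{Step 3 (general case).} Let $u$ be the $L^2(\mu)$-orthogonal projection of $T$ onto the closed subspace ${\rm Tan}_\mu$, and set $e=\|T-u\|=\dist(T,{\rm Tan}_\mu)$.
\begin{itemize}[leftmargin=*]
\item By \eqref{eq:ridge-inf-intro}, $\cR_\mu^{1/2}$ is the square root of an average of squared distances to closed subspaces. It is therefore a seminorm, and $\cR_\mu(v)\le\int\mathbb E|\theta\cdot v|^2\dd\sigma=\frac1d\|v\|^2$.
\item Hence $\cR_\mu(u)^{1/2}\le\cR_\mu(T)^{1/2}+e/\sqrt d$.
\item Then
\[
\dist(T,\cA_d)\le\dist(u,\cA_d)+e\le\kappa_{\mathrm{SPK}}^{-1/2}\cR_\mu(u)^{1/2}+e.
\]
\item Inserting the previous bound and Step 1 gives exactly \eqref{eq:spk-main-stab}.
\end{itemize}
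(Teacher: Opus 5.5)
Your main line is correct and is essentially the paper's proof. Step~1 is Lemma~\ref{lem:fenchel-gap}, followed by using $h=\tau_\theta$ as an admissible ridge function. Two small differences: you derive $G_\theta=2\,\mathbb E[\psi(s)+\psi^*(t)-st]$ directly from equality of laws instead of quoting Kantorovich duality, and you invoke $\Lambda^{-1}$-strong convexity of $\psi^*$ where the paper evaluates the Taylor upper bound; these give the same pointwise inequality. Step~3 is the paper's triangle-inequality argument via Lemma~\ref{lem:ridge-residual-lipschitz}, with the orthogonal projection onto ${\rm Tan}_\mu$ replacing the infimum over $S\in{\rm Tan}_\mu$.

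One correction: drop the fallback co-monotonicity argument, because it does not work. The identity $\mathbb E[\tau^{-1}(t)(\tau(s)-t)]=0$ does not follow from equality of laws, since it involves the joint law of $(s,t)$. Convexity of $\psi^*$ only gives $\mathbb E[\tau^{-1}(t)(\tau(s)-t)]\le 0$, which is the wrong sign for your purpose. Indeed, $(s-\tau^{-1}(t))(\tau(s)-t)$ is exactly the sum of the Fenchel--Young gaps at $(s,t)$ and at $(\tau^{-1}(t),\tau(s))$, so its expectation overestimates $G_\theta/2$. For a concrete check, take $s$ uniform on $[0,1]$, $\tau=\mathrm{id}$ (so $\Lambda=1$) and $t=1-s$. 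Then $G_\theta=\mathbb E|\tau(s)-t|^2=\tfrac13$, whereas the fallback would yield $G_\theta\ge 2\,\mathbb E|\tau(s)-t|^2=\tfrac23$. The Fenchel route you give first is the right one, and it is sharp in this example.
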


The proof uses a one-dimensional Fenchel-gap inequality in Section~\ref{sec:spk}. The factor $\Lambda$ is a scale parameter and cannot be removed in this form: replacing a non-affine Brenier map by a large multiple multiplies the distance to $\cA_d$ quadratically, while the one-dimensional Fenchel gaps scale only linearly.

The last term in \eqref{eq:spk-main-stab} is a Sobolev-approximation defect:
it measures the distance from the Brenier map to the Otto tangent space.
It vanishes in standard settings where compactly supported smooth gradients
are dense in the corresponding weighted Sobolev gradient norm; see
Lemma~\ref{lem:tan-density-criterion}.

\paragraph*{The Gaussian as the model case.}
The standard Gaussian is the model case for the SPK inequality. Its Hermite
decomposition gives a direct spectral computation and leads to the following
constant.

\begin{theorem}[Sharp Gaussian sliced Poincar\'e--Korn inequality]
\label{thm:gaussian-sk-intro}
For the standard Gaussian measure $\gamma_d=N(0,I_d)$, $d\ge2$,
\begin{equation}
\label{eq:gaussian-sk-intro}
 \cR_{\gamma_d}(u)
 \geq
 \frac{d-1}{d(d+2)}
 \dist_{L^2(\gamma_d)}^2(u,\cA_d)
\end{equation}
for every gradient field $u\in {\rm Tan}_{\gamma_d}$. The coefficient is sharp; equivalently,
\[
 \kappa_{\mathrm{SPK}}(\gamma_d)=\frac{d-1}{d(d+2)},
 \qquad
 \overline\kappa_{\mathrm{SPK}}(\gamma_d)
 =
 \frac{d-1}{d+2}
\]
where the normalized constant is defined as
$
 \overline\kappa_{\mathrm{SPK}}:=d\,\kappa_{\mathrm{SPK}}$.
\end{theorem}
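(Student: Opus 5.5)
The plan is to diagonalize both $\cR_{\gamma_d}$ and $\dist^2_{L^2(\gamma_d)}(\cdot,\cA_d)$ in the Wiener--Hermite chaos decomposition, which reduces \eqref{eq:gaussian-sk-intro} to a family of finite-dimensional extremal problems for symmetric tensors on $\Sd$, one for each chaos degree.

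\textbf{Step 1: chaos decomposition of a gradient field.} By density it is enough to treat $u=\nabla f$ with $f$ a polynomial. Write $f=\sum_k f_k$ with
\[
 f_k(x)=\langle A_k,\Wick{x^{\otimes k}}\rangle ,
\]
where $A_k$ is a symmetric $k$-tensor and $\Wick{\cdot}$ is the Wick (Hermite) power. Then $\nabla f_k$ lies in chaos $k-1$, and
\[
 \|\nabla f_k\|^2_{L^2(\gamma_d)}=k^2(k-1)!\,|A_k|^2 .
\]
The family $\cA_d$ consists exactly of the gradients of $b\cdot x$ (all of chaos $1$) and of $\tfrac{\lambda}{2}(|x|^2-d)$ (the direction $A_2\propto I_d$ in chaos $2$). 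Hence $\dist^2(u,\cA_d)$ equals the sum of $\|\nabla f_k\|^2$ over $k\ge 2$, with $A_2$ replaced by its traceless part. Moreover $\cR(u+a)=\cR(u)$ for $a\in\cA_d$, since homothetic affine fields have zero ridge defect. So I may assume $A_1=0$ and $\Tr A_2=0$.

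\textbf{Step 2: conditional expectations on chaos.} Fix $\theta$ and split $x=(\theta\cdot x)\theta+x_\perp$, where $x_\perp$ is an independent Gaussian. Integrating out $x_\perp$ in the Wick power gives
\[
 \mathbb E\bigl[\Wick{X^{\otimes m}}\mid \theta\cdot X\bigr]=\theta^{\otimes m}\,He_m(\theta\cdot X).
\]
Since $\theta\cdot\nabla f_k=k\langle A_k\theta,\Wick{x^{\otimes(k-1)}}\rangle$, the conditional expectation preserves chaos degree. Therefore
\[
 \cR(u)=\sum_k k^2(k-1)!\int_{\Sd}\Bigl(|A_k\theta|^2-A_k[\theta^{k}]^2\Bigr)\dd\sigma(\theta),
\]
and the cross terms between different chaoses vanish. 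Using $\int|A\theta|^2\dd\sigma=|A|^2/d$, the ratio on chaos $k$ is
\[
 \frac1d-\frac{\int_{\Sd}A[\theta^k]^2\dd\sigma}{|A|^2}.
\]
Consequently
\[
 \kappa_{\mathrm{SPK}}(\gamma_d)=\frac1d-\sup_{k\ge2}c_k,
 \qquad c_k:=\sup_{A}\frac{\int_{\Sd}A[\theta^k]^2\dd\sigma(\theta)}{|A|^2},
\]
where for $k=2$ the supremum is restricted to traceless $A$.

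\textbf{Step 3: the tensor extremal problem.} For $k=2$ and $A$ traceless, the moment identity
\[
 \int(\theta^{\mathsf T}A\theta)^2\dd\sigma=\frac{2\Tr A^2+(\Tr A)^2}{d(d+2)}
\]
gives $c_2=2/(d(d+2))$. For $k=3$, the candidate is the symmetrization of $e_1\otimes I_d$, for which $A[\theta^3]=\theta_1$ on the sphere. This gives
\[
 \frac{\int A[\theta^3]^2\dd\sigma}{|A|^2}=\frac{1/d}{(d+2)/3}=\frac{3}{d(d+2)},
\]
hence $\kappa\le \frac1d-\frac3{d(d+2)}=\frac{d-1}{d(d+2)}$, which is the claimed value. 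This also gives sharpness, with extremal field $u=\nabla\bigl((x_1)(|x|^2-d-2)\bigr)$ up to normalization.

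\textbf{Main obstacle: the upper bound $c_k\le 3/(d(d+2))$ for all $k\ge3$.} For this I would decompose the restricted polynomial $p(\theta)=A[\theta^k]$ into spherical harmonics, $p=\sum_j|\theta|^{2j}h_{k-2j}$. Different harmonic degrees are orthogonal on $\Sd$, and the tensor norm $|A|^2$ also splits orthogonally along this decomposition, with an explicit weight for each pair $(k,j)$. Computing $\|h\|^2_{L^2(\sigma)}$ against $|\mathrm{Sym}(h\otimes I^{\otimes j})|^2$ yields ratios of the form $\binom{k}{2j}$-type products over $d(d+2)\cdots$. I expect these ratios to be maximized at $k=3$, $j=1$ (the harmonic piece $h_1=\theta_1$), and to be at most $3/(d(d+2))$ in every case. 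Verifying this monotonicity in $(k,j)$ is the computational heart of the proof. For the $j=0$ (purely harmonic) part, the bound is immediate from the classical value
\[
 \|h\|^2_{L^2(\sigma)}=\frac{k!\,|A|^2}{d(d+2)\cdots(d+2k-2)}\le \frac{2|A|^2}{d(d+2)} \quad\text{for } k\ge2 .
\]
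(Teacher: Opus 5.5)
Your architecture is the paper's own: reduction to polynomial gradients, the Wick-chaos splitting of both $\cR_{\gamma_d}$ and $\dist^2_{L^2(\gamma_d)}(\cdot,\cA_d)$, the conditional-expectation formula for Wick powers, the reduction to the spherical constants $c_k$, and the extremal field $u=\nabla\bigl(x_1(|x|^2-d-2)\bigr)$ coming from $I\odot\mathcal H_1$ in degree $3$. The facts you state are correct, so the sharpness half, $\kappa_{\mathrm{SPK}}(\gamma_d)\le\frac{d-1}{d(d+2)}$, is complete.

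\textbf{The gap.} The inequality itself is equivalent to $c_k\le\frac{3}{d(d+2)}$ for every $k\ge3$, and you only conjecture this. The one case you verify, the purely harmonic part $j=0$, is the \emph{smallest} component in each degree. It therefore gives no evidence for the binding cases: at fixed degree the component ratios \emph{increase} with the number of traces.

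\textbf{How to close it.} You need the explicit weight of each traced component and a two-step monotonicity argument.

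First, the weight. Take $H\in\mathcal H_m$, $h(x)=H:x^{\otimes m}$, and $A\in\mathrm{Sym}^n(\R^d)$ with $A:x^{\otimes n}=|x|^{2j}h(x)$, where $n=m+2j$. Write $(a)_r=\Gamma(a+r)/\Gamma(a)$.
\begin{itemize}
\item The Fischer identity $\|A\|^2=\frac1{n!}p_A(\partial)p_A$, together with $\Delta^j(|x|^{2j}h)=4^j j!\,(m+\frac d2)_j\,h$, gives $\|A\|^2=\frac{m!\,4^j j!\,(m+d/2)_j}{n!}\|H\|^2$.
\item Combined with $\int_{\Sd}h^2\dd\sigma=\frac{m!}{2^m(d/2)_m}\|H\|^2$, this yields the component ratio $\lambda_{m,j}=\frac{n!}{2^n j!\,(d/2)_{n-j}}$.
\end{itemize}
Since the decomposition is orthogonal both in $L^2(\sigma)$ and in the tensor norm, as you observe, $c_k=\max_{m+2j=k}\lambda_{m,j}$ for $k\ge3$.

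Second, the monotonicity.
\begin{itemize}
\item At fixed $n$, $\lambda_{n-2j-2,j+1}/\lambda_{n-2j,j}=\frac{n-j-1+d/2}{j+1}\ge1$. So the most-traced component dominates.
\item Along the most-traced branches, $\lambda_{0,q+1}/\lambda_{0,q}=\frac{2q+1}{2q+d}\le1$ for $n=2q$, and $\lambda_{1,q+1}/\lambda_{1,q}=\frac{2q+3}{2q+2+d}\le1$ for $n=2q+1$.
\item The even branch must start at $n=4$. At $n=2$ the most-traced mode is the dilation, with ratio $1/d$, which you quotiented out.
\end{itemize}
Hence $\sup_{k\ge3}c_k=\lambda_{1,1}=\lambda_{0,2}=\frac{3}{d(d+2)}$. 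Note that the supremum is also attained in degree $4$ by $A\propto I\odot I$, not only at $k=3$, $j=1$ as you predicted.
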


Combining Theorem~\ref{thm:gaussian-sk-intro} with Theorem~\ref{thm:spk-main} yields Gaussian stability:
\begin{equation}
\label{eq:gaussian-stability-intro}
 \dist_{L^2(\gamma_d)}^2(T,\cA_d)
 \leq
 \frac{d(d+2)}{d-1}\,\Lambda\,
 \cD(\gamma_d,\nu).
\end{equation}
The proof of Theorem~\ref{thm:gaussian-sk-intro} is based on the Hermite decomposition and a Funk--Hecke tensor estimate. The affine trace-free modes have a larger ratio, while a third-order trace component attains the sharp global SPK constant; see Section~\ref{sec:gaussian}.

\medskip

Finally, we show that isotropic normalization is essential.

\begin{proposition}[Anisotropic Gaussian obstruction]
\label{prop:aniso-obstruction-intro}
There exists a family of centered Gaussian measures $\mu_\varepsilon$ on $\R^2$ satisfying a uniform Bakry--\'Emery condition ${\rm BE}(1,\infty)$ and a uniform Poincar\'e inequality, but
\[
 \kappa_{\mathrm{SPK}}(\mu_\varepsilon)\to0
 \qquad~~\text{as}~~ \varepsilon\downarrow0.
\]
Consequently no lower bound for $\kappa_{\mathrm{SPK}}$ can depend only on a Bakry--\'Emery lower curvature bound or on the usual Poincar\'e constant.
\end{proposition}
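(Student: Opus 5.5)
Everything takes place in $d=2$. I would use centered Gaussians $\mu_\varepsilon=N(0,\Sigma_\varepsilon)$ with $\Sigma_\varepsilon=\mathrm{diag}(1,\varepsilon^2)$, with $\varepsilon\le1$.

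\textbf{Uniform functional inequalities.} The density of $\mu_\varepsilon$ is $e^{-V}$ with $\nabla^2V=\Sigma_\varepsilon^{-1}=\mathrm{diag}(1,\varepsilon^{-2})\succeq I$. Hence ${\rm BE}(1,\infty)$ holds uniformly in $\varepsilon$. By Bakry--\'Emery, or directly because the Gaussian Poincar\'e constant is $\|\Sigma_\varepsilon\|_{op}=1$, the Poincar\'e constant is at most $1$ for every $\varepsilon$. This part is routine.

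\textbf{Test field.} I want $u\in{\rm Tan}_{\mu_\varepsilon}$ with $\cR_{\mu_\varepsilon}(u)\to0$ while $\dist^2(u,\cA_2)$ stays bounded below after normalisation. The natural choice is the linear gradient field $u(x)=\nabla\tfrac12(x_1^2-x_2^2)=(x_1,-x_2)$. It lies in ${\rm Tan}_\mu$, since polynomial gradients are $L^2(\mu)$-limits of cut-off smooth gradients for Gaussians.

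\textbf{Ridge defect.} For jointly Gaussian $(\theta\cdot u(X),\theta\cdot X)$ the conditional variance is deterministic, and equals $\Var(A)-\Cov(A,B)^2/\Var(B)$ with $A=\theta\cdot u(X)$ and $B=\theta\cdot X$. Write $\theta=(\cos t,\sin t)$. Then $\Var(A)=\Var(B)=c^2+\varepsilon^2s^2$ and $\Cov(A,B)=c^2-\varepsilon^2s^2$. So the ridge term is
\[
\frac{(c^2+\varepsilon^2s^2)^2-(c^2-\varepsilon^2s^2)^2}{c^2+\varepsilon^2s^2}=\frac{4\varepsilon^2c^2s^2}{c^2+\varepsilon^2s^2}\le 4\varepsilon^2 s^2.
\]
Averaging over $t$ gives $\cR_{\mu_\varepsilon}(u)=O(\varepsilon^2)$.

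\textbf{Distance to $\cA_2$.} Next I compute $\dist^2_{L^2(\mu_\varepsilon)}(u,\cA_2)=\min_{\lambda,b}\mathbb E|u-\lambda X-b|^2$. The field $u$ is centered, so $b=0$. Then
\[
\mathbb E|u-\lambda X|^2=(1-\lambda)^2+\varepsilon^2(1+\lambda)^2,
\]
which is minimised at $\lambda=\frac{1-\varepsilon^2}{1+\varepsilon^2}$ with value $\frac{4\varepsilon^2}{1+\varepsilon^2}$. This is also of order $\varepsilon^2$, so this field gives a ratio of order $1$, not $o(1)$.

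\textbf{Main obstacle: choosing the field.} The test field must have a ridge defect of smaller order than its distance to $\cA_2$. I would take $u(x)=\nabla\psi(x_2)$ with $\psi$ depending only on the thin variable, say $u=(0,g(x_2))$, whose mass lives where $\mu_\varepsilon$ is concentrated. The simplest instance, $u=(0,x_2)$, gives the following.
\begin{itemize}[label={}]
\item The ridge term is $\Var(s^2x_2\mid cx_1+sx_2)=s^4\varepsilon^2\bigl(1-\varepsilon^2s^2/(c^2+\varepsilon^2s^2)\bigr)=s^4\varepsilon^2c^2/(c^2+\varepsilon^2s^2)$. Its average over $t$ is again of order $\varepsilon^2$, with only a small exceptional set of angles.
\item The distance is $\min_\lambda\bigl(\lambda^2+(1-\lambda)^2\varepsilon^2\bigr)\approx\varepsilon^2$.
\end{itemize}
Again the ratio is of order $1$, so a linear $g$ is not enough.

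I would therefore pass to a rescaled nonlinear $g$, namely $g(x_2)=\varepsilon\,h(x_2/\varepsilon)$ with $h$ nonlinear, for instance $h(y)=y^2-1$. Then the distance to $\cA_2$ is of order $\varepsilon^2\|h\|^2$ and cannot be absorbed by affine maps. For the ridge term, note that for most $\theta$ the projection $\theta\cdot X\approx cX_1$ carries almost no information on $X_2$. So the conditional variance is about $s^4\varepsilon^2\Var(h)$, which is comparable again.

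The real decay must come from a direction-dependent balance. The plan is to compute $\cR$ and the distance exactly, by Hermite expansion in the whitened variables $y=\Sigma_\varepsilon^{-1/2}x$. In those variables $\cR_{\mu_\varepsilon}$ becomes an anisotropically weighted version of the isotropic Gaussian quadratic form, with angular weights degenerating as $\varepsilon\to0$. I then expect a degree-two or degree-three mode to give a ratio of order $\varepsilon^2$; this identification, in the spirit of the computation in Section~\ref{sec:gaussian}, is the step I expect to be hardest. Finally, since ${\rm BE}(1,\infty)$ and the Poincar\'e constant are uniform, no lower bound for $\kappa_{\mathrm{SPK}}$ depending only on them can exist.
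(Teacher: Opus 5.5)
\textbf{There is a genuine gap.} The part on ${\rm BE}(1,\infty)$ and the Poincar\'e constant is fine. Your computations for $(x_1,-x_2)$ and $(0,x_2)$ correctly show order-one ratios. There is a small slip: for $u=(0,x_2)$ one has $\theta\cdot u=\sin t\,X_2$, so the factor is $s^2$, not $s^4$. However, you never exhibit a field $u\in{\rm Tan}_{\mu_\varepsilon}$ with $\cR_{\mu_\varepsilon}(u)/\dist^2_{L^2(\mu_\varepsilon)}(u,\cA_2)\to0$, and that is the whole content of the statement. Your last step is only an expectation. The heuristic you follow also points the wrong way. Fields built from the thin variable $x_2$ have both $\cR$ and $\dist^2$ of order $\varepsilon^2$, as you found, so rescaling a nonlinear $h$ cannot change the order of the ratio. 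What you need is order-one $L^2$ mass orthogonal to $\cA_2$, while for $\sigma$-most $\theta$ the component $\theta\cdot u$ is almost a function of $\theta\cdot X\approx\cos t\,X_1$.

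\textbf{The missing test field.} It is the other half of the degree-two family you started from: the off-diagonal mode $u(x)=(x_2,x_1)=\nabla(x_1x_2)$. It is centered and $\mathbb E[X\cdot u(X)]=2\mathbb E[X_1X_2]=0$, so it is orthogonal to $\cA_2$ and $\dist^2_{L^2(\mu_\varepsilon)}(u,\cA_2)=\mathbb E|u|^2=1+\varepsilon^2$. The thick coordinate $X_1$ sits in the second component, where no $\lambda x+b$ can absorb it. Take $\theta=(\cos t,\sin t)$ and your normalization $\Var X_1=1$, $\Var X_2=\varepsilon^2$. The Gaussian formula $\Var(Y\mid Z)=\Var Y-\Cov(Y,Z)^2/\Var Z$ gives
\[
 \Var\bigl(\theta\cdot u(X)\mid\theta\cdot X\bigr)=\frac{\varepsilon^2\cos^2(2t)}{\cos^2t+\varepsilon^2\sin^2t}.
\]
For generic $t$, the term $\sin t\,X_1$ is essentially determined by $\theta\cdot X$, and only $\cos t\,X_2$, with variance $\varepsilon^2$, is left. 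Only an $O(\varepsilon)$-set of angles near $\pm e_2$ produces order-one conditional variance. Averaging,
\[
 \cR_{\mu_\varepsilon}(u)\le\frac1{2\pi}\int_0^{2\pi}\frac{\varepsilon^2\dd t}{\cos^2t+\varepsilon^2\sin^2t}=\varepsilon,
\]
so the ratio is at most $\varepsilon/(1+\varepsilon^2)\to0$. This is exactly the paper's argument, with the roles of the two coordinates swapped. The decay is of order $\varepsilon$, not $\varepsilon^2$, and no Hermite analysis in whitened variables is needed.
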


\medskip

\paragraph{Further research}
The anisotropic Gaussian example suggests that isotropic normalization is
essential. This leaves the question whether isotropic log-concave measures
satisfy a dimension-free SPK bound, in analogy with the KLS \cite{KLS1995}
and slicing problems \cite{KlartagLehec2025}.

The question is not reducible to the KLS or slicing problems. Possible first
cases are unconditional measures, uniformly log-concave measures with
covariance normalization, and heat-flow regularizations. Eldan's stochastic
localization \cite{Eldan2013} and Ciosmak's leaf decomposition
\cite{CiosmakLeaves} may  be relevant tools.

\paragraph{Organization}

Section~\ref{sec:rigidity} proves the zero-deficit rigidity theorems, including a higher-dimensional generalization. Section~\ref{sec:spk} introduces the sliced Poincar\'e--Korn constant and proves stability under the SPK hypothesis with an explicit tangent-space defect, including a discussion of the Lipschitz parameter $\Lambda$. Section~\ref{sec:gaussian} proves the sharp Gaussian SPK inequality and Gaussian stability. Section~\ref{example} gives further examples: bounded Gaussian perturbations, compact classes for fixed measures, and the anisotropic Gaussian counterexample. The appendices contain the linear algebra computations, mainly from the representation theory of the orthogonal group on symmetric spaces.

\section{Rigidity of the Sliced Wasserstein Deficit}
\label{sec:rigidity}

In this section we prove Theorem~\ref{thm:rigidity-main}. The proof has three elementary steps. Zero deficit gives one-dimensional
monotonicity in almost every direction. This forces every chord of \(T\) to be
parallel to the corresponding chord in the source. Finally, a three-point
argument makes the proportionality factor constant.
In Theorem \ref{thm:grassmannian-rigidity} we use a different argument for
Grassmannian projections.

\subsection{Deficit decomposition}

Let $\mu\ll\cL^d$ and let $T=\nabla\varphi$ be the Brenier map from $\mu$ to $\nu$. For every direction $\theta\in\Sd$ define
\[
 g_\theta(T)
 :=
 \int |\theta\cdot(T(x)-x)|^2\,\dd\mu(x)
 -W_2^2(\mu_\theta,\nu_\theta).
\]
Then $g_\theta(T)\ge0$ and
\begin{equation}
\label{eq:deficit-average}
 \cD(\mu,\nu)=\int_{\Sd}g_\theta(T)\,\dd\sigma(\theta).
\end{equation}
Indeed, for any $X\sim \mu$, the projected coupling $(\theta\cdot X,\theta\cdot T(X))$ is an admissible coupling between $\mu_\theta$ and $\nu_\theta$, so $g_\theta\ge0$. Averaging in $\theta$ gives
\[
 \int_{\Sd}\int |\theta\cdot(T(x)-x)|^2\,\dd\mu(x)\dd\sigma(\theta)
 =
 \frac1d\int |T(x)-x|^2\,\dd\mu(x)
 =
 \frac1d W_2^2(\mu,\nu),
\]
which proves \eqref{eq:deficit-average}.

\begin{lemma}[Directional signs force parallelism]
\label{lem:parallel}
Let $a,b\in\R^d$ with $a\ne0$. If
\[
 (\theta\cdot a)(\theta\cdot b)\ge0
\]
for $\sigma$-a.e. $\theta\in\Sd$, then $b=\lambda a$ for some $\lambda\ge0$.
\end{lemma}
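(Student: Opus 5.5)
The plan is to pass from the almost-everywhere statement to an everywhere statement by continuity, and then read off parallelism from a suitable choice of direction. The function $\theta\mapsto(\theta\cdot a)(\theta\cdot b)$ is continuous on $\Sd$. Its strict negativity set is open, and every nonempty open subset of $\Sd$ has positive $\sigma$-measure. So the hypothesis upgrades to
\[
 (\theta\cdot a)(\theta\cdot b)\ge0\qquad\text{for every }\theta\in\Sd,
\]
and by homogeneity for every $\theta\in\R^d$. In other words, the quadratic form $Q(\theta)=\theta^{\top}\tfrac12(a\otimes b+b\otimes a)\theta$ is positive semidefinite.

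Next decompose $b=\lambda a+w$ with $w\perp a$ and $\lambda=(a\cdot b)/|a|^2$, and suppose $w\neq0$. Test the inequality on $\theta=a+tw$ for real $t$:
\[
 \theta\cdot a=|a|^2,\qquad \theta\cdot b=\lambda|a|^2+t|w|^2.
\]
The product $|a|^2(\lambda|a|^2+t|w|^2)$ becomes negative for $t$ sufficiently negative. This contradicts the everywhere inequality, so $w=0$ and $b=\lambda a$.

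Finally take $\theta=a/|a|$, which gives $\lambda|a|^2\ge0$, hence $\lambda\ge0$. This settles $d\ge2$. The orthogonal complement of $a$ is nontrivial there, though the argument only needs $w$ to exist when $w\neq0$, so the dimension plays no real role.

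None of these steps is a genuine obstacle. The only point needing care is the first one: measure-zero exceptional sets must be ruled out using continuity together with full support of $\sigma$. Once the inequality holds everywhere, the perturbation $\theta=a+tw$ is elementary. An equivalent alternative is to note that a rank-$\le2$ symmetric matrix $\tfrac12(a\otimes b+b\otimes a)$ has eigenvalues $\tfrac12\bigl(a\cdot b\pm|a||b|\bigr)$. Positive semidefiniteness then forces $|a\cdot b|=|a||b|$ with $a\cdot b\ge0$, which is the equality case of Cauchy--Schwarz.
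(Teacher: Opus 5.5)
Your proof is correct and is essentially the paper's argument: both rest on the fact that the set where $(\theta\cdot a)(\theta\cdot b)<0$ is open in the sphere, hence of positive $\sigma$-measure as soon as it is nonempty, which happens whenever $b$ has a nonzero component orthogonal to $a$ or $b=\lambda a$ with $\lambda<0$. Your explicit witness $\theta=a+tw$ simply makes concrete the paper's unproved claim that the hemispheres $\{\theta\cdot a>0\}$ and $\{\theta\cdot b<0\}$ meet in a set of positive measure.
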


\begin{proof}
If $a$ and $b$ are not collinear, the two hemispheres $\{\theta:\theta\cdot a>0\}$ and $\{\theta:\theta\cdot b<0\}$ have an intersection of positive surface measure. On this intersection the product is negative. Thus $b=\lambda a$ for some $\lambda\in\R$. The sign condition forces $\lambda\ge0$.
\end{proof}

\begin{lemma}[Constancy from three points]
\label{lem:three-points}
Let \(d\ge2\), let \(\mu \ll\mathcal L^d\) be a probability measure, and let
\(T\in L^0(\mu;\mathbb R^d)\). Suppose that for \(\mu \otimes \mu\)-a.e. pair
\((x,y)\),
\[
 T(x)-T(y)=\lambda(x,y)(x-y)
\]
for some measurable \(\lambda(x,y)\in\mathbb R\). Then
\(\lambda(x,y)=\lambda_0\) for \(\mu\otimes \mu\)-a.e. \((x,y)\), and hence
\[
 T(x)=\lambda_0 x+b
 \qquad m\text{-a.e.}
\]
for some \(b\in\mathbb R^d\).
\end{lemma}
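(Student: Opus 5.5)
The plan is to apply the hypothesis to triples of points and use affine independence. By Fubini, the set of triples $(x,y,z)\in(\R^d)^3$ for which the relation holds simultaneously for all three pairs $(x,y)$, $(y,z)$, $(x,z)$ has full $\mu^{\otimes3}$-measure. Since $\mu\ll\cL^d$ and $d\ge2$, the set of triples with $x,y,z$ collinear is $\cL^{3d}$-null, hence $\mu^{\otimes3}$-null. This is exactly where $d\ge2$ and absolute continuity enter. For a good, non-collinear triple, summing the identities gives
\[
 \lambda(x,y)(x-y)+\lambda(y,z)(y-z)=T(x)-T(z)=\lambda(x,z)(x-z)=\lambda(x,z)\bigl((x-y)+(y-z)\bigr).
\]
The vectors $x-y$ and $y-z$ are linearly independent, so $\lambda(x,y)=\lambda(x,z)=\lambda(y,z)$.

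The next step is to pass from this local equality to a global constant. For $\mu^{\otimes3}$-a.e. $(x,y,z)$ we have $\lambda(x,y)=\lambda(x,z)$. Fubini then gives, for $\mu$-a.e. $x$, a value $c(x)$ such that $\lambda(x,y)=c(x)$ for $\mu$-a.e. $y$. We also have $\lambda(x,y)=\lambda(y,z)$ for a.e. triples. Freezing a good $x$ and applying Fubini in $(y,z)$, this says $c(x)=\lambda(y,z)$ for $\mu\otimes\mu$-a.e. $(y,z)$. Hence $\lambda$ equals a single constant $\lambda_0$ $\mu\otimes\mu$-a.e. Alternatively, the identity $c(x)=\lambda(x,y)=c(y)$ a.e. in $(x,y)$ already shows $c$ is a.e. constant.

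To conclude, fix a point $y_0$ such that $T(x)-T(y_0)=\lambda_0(x-y_0)$ for $\mu$-a.e. $x$; such a $y_0$ exists by Fubini. Setting $b:=T(y_0)-\lambda_0y_0$ gives $T(x)=\lambda_0x+b$ $\mu$-a.e.

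The main obstacle is the measure-theoretic bookkeeping: one must choose null sets so that all three pair relations hold on a common full-measure set of triples, and avoid using $\lambda$ on the diagonal $x=y$, where it is undefined. The diagonal is $\mu\otimes\mu$-null because $\mu$ has no atoms. The measurability of $\lambda$ is given in the hypothesis, so the Fubini steps are legitimate. The non-collinearity null set follows from the fact that, for fixed $x\neq y$, the line through them is $\cL^d$-null when $d\ge2$.
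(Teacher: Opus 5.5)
Your proposal is correct and follows essentially the same route as the paper: pass to $\mu^{\otimes3}$-a.e. non-collinear triples, use linear independence of $x-y$ and $y-z$ to get $\lambda(x,y)=\lambda(y,z)=\lambda(x,z)$, apply Fubini to make $\lambda$ a.e. constant, then fix one good point to define $b$. Your Fubini step (first freezing $x$ to get $c(x)$, then using $\lambda(x,y)=\lambda(y,z)$) is a slightly more explicit version of the paper's terse ``both sides depend only on $y$; symmetry removes the dependence on $y$.''
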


\begin{proof}
For a.e. triple $(x,y,z)$, the vectors $x-y$ and $y-z$ are linearly independent, 
\[
 T(x)-T(y)=\lambda{(x,y)}(x-y),
 \quad
 T(y)-T(z)=\lambda{(y,z)}(y-z),
\]
and
\[
 T(x)-T(z)=\lambda{(x,z)}(x-z)=\lambda{(x, z)}(x-y)+\lambda{(x,z)}(y-z).
\]
Since also $T(x)-T(z)=(T(x)-T(y))+(T(y)-T(z))$, linear independence gives
\[
 \lambda{(x, y)}=\lambda{(y,z)}=\lambda{(x,z)}
\]
for almost every triple. Fubini then implies that $\lambda(x,y)$ is a.e. constant: for a.e. $y$, the equality $\lambda(x,y)=\lambda(y,z)$ holds for almost every $(x,z)$, so both sides must equal a number depending only on $y$; finally symmetry removes the dependence on $y$.  
\end{proof}

\begin{proof}[Proof of Theorem~\ref{thm:rigidity-main}]
Assume first $\cD(\mu,\nu)=0$. Since $g_\theta\ge0$ and $\int g_\theta\,\dd\sigma=0$, we have $g_\theta=0$ for $\sigma$-a.e. $\theta$. Hence, for almost every $\theta$, the projected coupling
$
 (\theta\cdot X,\theta\cdot T(X))
$
is an optimal coupling between $\mu_\theta$ and $\nu_\theta$, and thus
\begin{equation}
\label{eq:directional-monotone}
 (\theta\cdot(x-y))(\theta\cdot(T(x)-T(y)))\ge0
\end{equation}
for $\mu\otimes\mu$-a.e. $(x,y)$ and $\sigma$-a.e. $\theta$. By Fubini, for $\mu\otimes\mu$-a.e. $(x,y)$ the sign condition \eqref{eq:directional-monotone} holds for $\sigma$-a.e. $\theta$. Since $\mu\ll\cL^d$, $x\ne y$ for almost every pair. Lemma~\ref{lem:parallel} gives
\[
 T(x)-T(y)=\lambda(x,y)(x-y),
 \qquad
 \lambda(x,y)\ge0.
\]
Lemma~\ref{lem:three-points} yields $\lambda(x,y)=\lambda_0$ for almost every pair. Fixing one point by Fubini gives $T(x)=\lambda_0 x+b$ for $\mu$-a.e. $x$.

Conversely, suppose $T(x)=\lambda x+b$ with $\lambda\ge0$. Then for every $\theta$,
\[
 \theta\cdot T(x)=\lambda(\theta\cdot x)+\theta\cdot b
\]
is the one-dimensional monotone transport from $\mu_\theta$ to $\nu_\theta$. Therefore $g_\theta=0$ for every $\theta$ and $\cD(\mu,\nu)=0$.
\end{proof}

\begin{remark}[Relation with the ridge formulation]
Zero deficit also gives
\[
 \theta\cdot T(x)=\tau_\theta(\theta\cdot x),
 \qquad \tau_\theta:\mathbb R\to\mathbb R,
\]
for a.e. direction, whenever the one-dimensional optimal map is unique.
If \(T=\nabla\varphi\) were smooth, differentiating in directions
\(v\perp\theta\) would give
\[
 P_{\theta^\perp}D^2\varphi\,\theta=0.
\]
Section~\ref{sec:grassmannian-rigidity} makes this heuristic rigorous at the
level of Hessian measures.
\end{remark}

\subsection{Higher-dimensional rigidity}
\label{sec:grassmannian-rigidity}

The one-dimensional rigidity theorem has a higher-dimensional analogue for
Grassmannian projections. The proof is different: the ridge factorization of
projected Brenier maps forces the Hessian measure of the convex potential to
be scalar.

Let $G_{d,k}$ be the Grassmannian of $k$-dimensional linear subspaces of $\R^d$, let $\pi_{d,k}$ be its Haar probability measure, and let $P_E$ denote the orthogonal projection onto $E$. Define
\[
 SW_{2,k}^2(\mu,\nu)
 :=
 \int_{G_{d,k}}
 W_2^2((P_E)_\#\mu,(P_E)_\#\nu)\,\dd\pi_{d,k}(E).
\]
Since
\[
 \int_{G_{d,k}}|P_Ez|^2\,\dd\pi_{d,k}(E)=\frac{k}{d}|z|^2,
\]
we have
\[
 SW_{2,k}^2(\mu,\nu)\le \frac{k}{d}W_2^2(\mu,\nu),
\]
and we define the Grassmannian deficit
\begin{equation}
\label{eq:grass-deficit}
 \cD_k(\mu,\nu)
 :=
 \frac{k}{d}W_2^2(\mu,\nu)-SW_{2,k}^2(\mu,\nu).
\end{equation}
For $k=1$ this is the original deficit $\cD$.

\begin{theorem}[Grassmannian rigidity]
\label{thm:grassmannian-rigidity}
Let $1\le k\le d-1$. Let $\mu=\rho\cL^d$ be concentrated on a connected open set $\Omega\subset\R^d$, with $\rho>0$ a.e. on $\Omega$, and let $T=\nabla\varphi$ be the Brenier map from $\mu$ to $\nu$. Assume that the Brenier potential $\varphi$ has a finite convex representative on $\Omega$. If
\[
 \cD_k(\mu,\nu)=0,
\]
then
\[
 T(x)=\lambda x+b
 \qquad \mu\text{-a.e.}
\]
for some $\lambda\ge0$ and $b\in\R^d$. Conversely, every map of this form has zero $k$-dimensional deficit.
\end{theorem}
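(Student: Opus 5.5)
The plan is to follow the heuristic in the Remark after Theorem~\ref{thm:rigidity-main}, made rigorous at the level of distributional Hessians. By analogy with \eqref{eq:deficit-average}, write $\cD_k(\mu,\nu)=\int_{G_{d,k}} g_E(T)\,\dd\pi_{d,k}(E)$ with
\[
 g_E(T):=\int |P_E(T(x)-x)|^2\,\dd\mu(x)-W_2^2((P_E)_\#\mu,(P_E)_\#\nu)\ge0.
\]
Zero deficit then gives $g_E=0$ for $\pi_{d,k}$-a.e. $E$. For such $E$, the coupling $(P_EX,P_ET(X))$ with $X\sim\mu$ is optimal between $(P_E)_\#\mu$ and $(P_E)_\#\nu$. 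Since $\mu\ll\cL^d$, we have $(P_E)_\#\mu\ll\cL^k$ on $E$. Brenier's theorem in $E\cong\R^k$ then says this optimal plan is unique and is induced by a map $S_E=\nabla\psi_E$. Hence we obtain the \emph{ridge factorization}
\[
 P_E T(x)=S_E(P_Ex)\qquad\mu\text{-a.e. } x,\ \text{for a.e. } E.
\]

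\textbf{Step 1: the distributional derivative.} Next I would convert the ridge factorization into a distributional identity on $\Omega$. Because $\varphi$ is finite and convex on $\Omega$, it is locally Lipschitz. Hence $\nabla\varphi\in L^\infty_{\rm loc}(\Omega)$, and $D^2\varphi=M$ is a symmetric, positive semidefinite matrix-valued Radon measure on $\Omega$. Since $\rho>0$ a.e. on $\Omega$, the identity $P_E\nabla\varphi=S_E\circ P_E$ holds $\cL^d$-a.e. on $\Omega$. So the field $P_E\nabla\varphi$ depends only on $P_Ex$, and its derivative in any direction $v\in E^\perp$ vanishes in $\mathcal D'(\Omega)$. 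One checks this by testing against $\partial_v\zeta$ and using Fubini along lines parallel to $v$. Lines that leave and re-enter $\Omega$ cause no trouble, because the identity is pointwise a.e. and the derivative is taken locally. This yields
\[
 P_E\,M\,P_{E^\perp}=0\quad\text{as measures, for a.e. } E.
\]
Pairing with a fixed test function shows that the map $E\mapsto \langle P_EMP_{E^\perp},\zeta\rangle$ is continuous. So the identity holds for every $E\in G_{d,k}$.

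\textbf{Step 2: $M$ is scalar.} For $1\le k\le d-1$, let $A=M(B)$ for a Borel set $B$, a symmetric matrix. We know $P_EAP_{E^\perp}=0$ for every $k$-plane $E$, so every $k$-plane is $A$-invariant. Given any unit vector $u$ and any $w\perp u$, choose $E\ni u$ with $w\in E^\perp$; this is possible since $k\le d-1$. Then $w\cdot Au=0$, so every vector is an eigenvector of $A$ and $A=c(B)I$. Thus $M=m\,I$ for a nonnegative Radon measure $m$.

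\textbf{Step 3: identifying $\varphi$; the main obstacle.} This step is where I expect the main difficulty: passing from $D^2\varphi=mI$ to $m=\lambda\cL^d$ on a connected open set. The equations $\partial_i\partial_j\varphi=0$ for $i\ne j$ say that $\partial_j\varphi$ depends only on $x_j$ locally, say $\partial_j\varphi=f_j(x_j)$ on a box. Then $m=f_j'(x_j)$ for every $j$ in the distributional sense. Because $d\ge2$, the distribution $m$ is invariant under translations in $x_i$ (taking $j\ne i$) for every $i$. Hence $m$ is locally a constant multiple of $\cL^d$ on each box. Connectedness of $\Omega$, together with overlapping boxes, makes the constant $\lambda\ge0$ global. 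Then $\nabla(\varphi-\tfrac\lambda2|x|^2)$ has zero distributional derivative on $\Omega$, so it is a constant $b$. Hence $T(x)=\lambda x+b$ $\mu$-a.e.

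\textbf{Converse.} If $T(x)=\lambda x+b$ with $\lambda\ge0$, then $P_ET(x)=\lambda P_Ex+P_Eb$ is the gradient of a convex function on $E$, hence optimal. So $g_E=0$ for every $E$, and $\cD_k(\mu,\nu)=0$.
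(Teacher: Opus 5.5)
Your proof is correct and follows the same skeleton as the paper's proof:
\begin{itemize}
\item the decomposition $\cD_k=\int g_E\,\dd\pi_{d,k}$;
\item Brenier's theorem on $E$, giving the ridge factorization $P_ET=S_E\circ P_E$;
\item the distributional identity $P_E D^2\varphi\,P_{E^\perp}=0$ for a.e.\ $E$;
\item scalarity of the Hessian measure;
\item constancy of the scalar by connectedness.
\end{itemize}

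The one genuine difference is your Step 2. The paper writes $D^2\varphi=M\mathfrak m$ in polar form. It then uses Fubini to get $P_EM(x)(I-P_E)=0$ for a.e.\ $E$ at $\mathfrak m$-a.e.\ $x$, and invokes Lemma~\ref{lem:grassmannian_algebra}. That lemma's proof goes through Schur's lemma and the explicit Grassmannian moment identity with constant $k(d-k)/((d-1)(d+2))$.

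You instead pair with a test function $\zeta$ and upgrade ``a.e.\ $E$'' to ``every $E$'' by continuity of $E\mapsto P_E\langle D^2\varphi,\zeta\rangle P_{E^\perp}$. This step uses that $\pi_{d,k}$ has full support, so a full-measure set of planes is dense; you should state this explicitly. You then conclude with the elementary observation that any unit $u$ and any $w\perp u$ are separated by some $k$-plane. Hence every vector is an eigenvector.

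Your route is self-contained. It avoids both the polar decomposition and the joint measurability needed for the Fubini step. The paper's route works directly with the a.e.\ statement, and its lemma gives a quantitative identity rather than only identifying the kernel.

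Your box argument in Step 3 is equivalent to the paper's one-line commutation $\partial_j\omega=\partial_j\partial_{ii}\varphi=\partial_i\partial_{ij}\varphi=0$. Using that directly would shorten the step.
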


\begin{lemma}[Linear algebra on the Grassmannian]\label{lem:grassmannian_algebra}
Let \(M\) be a symmetric \(d\times d\) matrix and let \(1\le k\le d-1\).
If
\[
P_E M(I-P_E)=0
\]
for \(\pi_{d,k}\)-a.e. \(E\in G_{d,k}\), then \(M=\alpha I\) for some
\(\alpha\in\mathbb R\). Moreover,
\[
\int_{G_{d,k}}\|P_E M(I-P_E)\|_{\mathrm{HS}}^2\,\dd\pi_{d,k}(E)
=
\frac{k(d-k)}{(d-1)(d+2)}
\left\|M-\frac{\operatorname{Tr}M}{d}I\right\|_{\mathrm{HS}}^2.
\]

\end{lemma}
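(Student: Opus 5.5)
Both parts reduce to the trace-free part of $M$. Write $M=\alpha I+M_0$ with $\alpha=\Tr M/d$ and $\Tr M_0=0$. Since $P_E(\alpha I)(I-P_E)=0$, we have $P_EM(I-P_E)=P_EM_0(I-P_E)$, so it suffices to study $M_0$. I will first prove the integral identity and then read off the rigidity statement from it. If the hypothesis holds for a.e. $E$, the left-hand side of the identity vanishes. The constant $\frac{k(d-k)}{(d-1)(d+2)}$ is positive for $1\le k\le d-1$, so $M_0=0$.

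For the identity, expand
\[
\|P_EM_0(I-P_E)\|_{\mathrm{HS}}^2=\Tr\bigl(P_EM_0(I-P_E)M_0P_E\bigr)=\Tr(P_EM_0^2)-\Tr(P_EM_0P_EM_0).
\]
Both terms are averaged over $E$ using rotation invariance of $\pi_{d,k}$. The first is linear in $P_E$, and $\int P_E\,\dd\pi_{d,k}=\frac kd I$, so its average is $\frac kd\|M_0\|_{\mathrm{HS}}^2$. The second is quadratic in $P_E$. The map $M_0\mapsto\int P_EM_0P_E\,\dd\pi_{d,k}(E)$ commutes with conjugation by $O(d)$. It acts on symmetric matrices, which split into the irreducible pieces $\R I$ and the trace-free symmetric matrices. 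By Schur's lemma it is therefore a scalar $c$ on trace-free symmetric matrices, giving $\int\Tr(P_EM_0P_EM_0)\,\dd\pi_{d,k}=c\|M_0\|_{\mathrm{HS}}^2$.

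To find $c$, I test on a single matrix. Take $E$ spanned by the first $k$ vectors of a Haar-random orthonormal frame $U$ and write $P_E=\sum_{i\le k}u_iu_i^T$. Then
\[
\Tr(P_EM_0P_EM_0)=\sum_{i,j\le k}(u_i^TM_0u_j)^2.
\]
I evaluate this on $M_0=e_1e_2^T+e_2e_1^T$, which has $\|M_0\|_{\mathrm{HS}}^2=2$, using the uniform-sphere moments
\[
\mathbb E[u_{i1}^2u_{i2}^2]=\frac1{d(d+2)},\qquad
\mathbb E[u_{i1}^2u_{j2}^2]=\frac{d+1}{(d-1)d(d+2)}\quad(i\ne j),
\]
together with the corresponding cross-moments for $i\ne j$. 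The result should be
\[
c=\frac kd-\frac{k(d-k)}{(d-1)(d+2)}.
\]

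Computing $c$ is the main obstacle, because the $i\ne j$ moments of an orthonormal frame must be handled correctly. As a shortcut I will pin $c$ down by boundary checks. Write $c(k)=\frac kd-\delta(k)$. Then $\delta(d)=0$ since $P_E=I$, and $\delta(k)=\delta(d-k)$ by the symmetry $E\leftrightarrow E^\perp$ (use $\|P_EM(I-P_E)\|=\|(I-P_E)MP_E\|$). The computation is quadratic in the frame vectors, so $\delta$ is a polynomial of degree at most $2$ in $k$. Together with $\delta(0)=0$, these force $\delta(k)=Ck(d-k)$. The constant $C$ comes from the single case $k=1$, which is a plain sphere computation: $\mathbb E\,|M_0\theta|^2-\mathbb E(\theta\cdot M_0\theta)^2=\frac1d\|M_0\|^2-\frac{2}{d(d+2)}\|M_0\|^2=\frac{1}{d+2}\|M_0\|_{\mathrm{HS}}^2$. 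This gives $C(d-1)=\frac1{d+2}$, i.e. $C=\frac1{(d-1)(d+2)}$, which yields the stated constant.
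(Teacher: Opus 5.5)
Your proof is correct. The reduction step is the same as the paper's: you remove the scalar part via $P_EI(I-P_E)=0$, apply Schur's lemma on the irreducible $O(d)$-module of trace-free symmetric matrices, and read off rigidity from positivity of the constant. One cosmetic difference is that you apply Schur to the linear map $M\mapsto\int P_EMP_E\,\dd\pi_{d,k}$ rather than to the quadratic form $Q$. Since that map is self-adjoint for the Hilbert--Schmidt product, the real version of Schur's lemma does give a scalar.

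The real difference is how you find the constant. The paper evaluates $Q$ at $\frac1{\sqrt2}\mathrm{diag}(1,-1,0,\dots,0)$. It does this by computing the $O(d)$-invariant second-moment tensor of the entries $p_{ab}$ of $P_E$, whose two coefficients are fixed by $\Tr P_E=\Tr P_E^2=k$.

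You avoid Grassmannian moments altogether. With $P_E=\sum_{i\le k}u_iu_i^{T}$ built from the first $k$ columns of a Haar orthogonal matrix, exchangeability gives $\int\Tr(P_EM_0P_EM_0)\,\dd\pi_{d,k}=k\alpha+k(k-1)\beta$. Here $\alpha,\beta$ do not depend on $k$, and the formula also holds at $k=0$ and $k=d$. This is the precise content behind your phrase ``quadratic in the frame vectors''; state it that way. It follows that $\delta(0)=\delta(d)=0$ forces $\delta(k)=Ck(d-k)$, and the $k=1$ spherical fourth-moment formula gives $C=\frac1{(d-1)(d+2)}$. The $E\leftrightarrow E^\perp$ symmetry is then redundant.

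The paper's route is fully explicit and yields the moments $\int p_{ii}^2$ and $\int p_{ij}^2$ as by-products. Yours explains the factor $k(d-k)$ structurally and reduces everything to the sphere identity already used in Proposition~\ref{prop:gaussian-linear-exact}.

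The direct frame computation you set aside also closes. The cross moment $\mathbb E[u_{i1}u_{i2}u_{j1}u_{j2}]=-\frac1{d(d-1)(d+2)}$ for $i\ne j$ follows from $\sum_j u_{j1}u_{j2}=0$. With it one gets $c=\frac{2k}{d(d+2)}+\frac{k(k-1)}{(d-1)(d+2)}$, which equals your target value.
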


\begin{proof}
This is proved in Appendix~\ref{app:grassmannian}.
\end{proof}

\medskip

\begin{proof}[Proof of Theorem~\ref{thm:grassmannian-rigidity}]
Let $X\sim\mu$. For $E\in G_{d,k}$ put
\[
 g_E(T)
 :=
 \mathbb E|P_E(T(X)-X)|^2
 -
 W_2^2((P_E)_\#\mu,(P_E)_\#\nu).
\]
Then $g_E(T)\ge0$ and
\[
 \cD_k(\mu,\nu)=\int_{G_{d,k}}g_E(T)\,\dd\pi_{d,k}(E).
\]
Hence $\cD_k(\mu,\nu)=0$ implies $g_E(T)=0$ for a.e. $E$. For such $E$, the coupling
$
 (P_E X,P_E T(X))
$
is an optimal quadratic coupling between $(P_E)_\#\mu$ and $(P_E)_\#\nu$. Since $(P_E)_\#\mu$ is absolutely continuous on $E$, Brenier's theorem gives a convex function $f_E:E\to\R$ such that
\begin{equation}
\label{eq:grass-ridge-factor}
 P_E T(x)=\nabla_E f_E(P_Ex)
 \qquad \mu\text{-a.e. }x.
\end{equation}
Because $\rho>0$ a.e. on $\Omega$, this identity also holds for Lebesgue-a.e. $x\in\Omega$.

Now consider the convex potential  $\varphi$  on $\Omega$ such that $T=\nabla \varphi$.   By Alexandrov's theorem, its distributional Hessian $D^2\varphi$ is a symmetric positive semidefinite matrix-valued Radon measure on $\Omega$. For  a space $E$  satisfying \eqref{eq:grass-ridge-factor} and take $v\in E$, $w\in E^\perp$,    the scalar function
$
 v\cdot T(x)=v\cdot \nabla\varphi(x)
$
depends only on $P_Ex$. Hence its derivative in the direction $w$ vanishes in distributions:
\[
 \partial_w(v\cdot\nabla\varphi)=0.
\]
Equivalently,
\begin{equation}
\label{eq:block-hessian-measure}
 v^T D^2\varphi\,w=0
 \qquad\text{as a signed Radon measure.}
\end{equation}
Thus
\[
 P_E D^2\varphi(I-P_E)=0
\]
as a matrix-valued measure for a.e. $E$.

Let $\mathfrak m:=\Tr(D^2\varphi)$ be a Radon measure and write the polar decomposition
\[
 D^2\varphi=M \mathfrak m,
\]
where $M(x)$ is symmetric positive semidefinite and $\Tr M(x)=1$ for $\mathfrak m$-a.e. $x$. By Fubini, \eqref{eq:block-hessian-measure} implies that for $\mathfrak m$-a.e. $x$,
\[
 P_E M(x)(I-P_E)=0
\]
for $\pi_{d,k}$-a.e. $E$. Lemma~\ref{lem:grassmannian_algebra} gives $M(x)=I/d$ for $\mathfrak m$-a.e. $x$. Consequently
\[
 D^2\varphi=\omega I
\]
for the scalar Radon measure $\omega:=\mathfrak m/d$.

It remains to show that $\omega$ is a constant multiple of Lebesgue measure. In distributions,
\[
 \partial_{ij}\varphi=0\quad (i\ne j),
 \qquad
 \partial_{11}\varphi=\cdots=\partial_{dd}\varphi=\omega.
\]
Fix $j$ and choose $i\ne j$. Then in distributions, 
\[
 \partial_j\omega
 =\partial_j\partial_{ii}\varphi
 =\partial_i\partial_{ji}\varphi
 =0.
\]
Thus $\nabla\omega=0$ in $\mathcal D'(\Omega)$. Since $\Omega$ is connected, $\omega=\lambda\cL^d$ for some constant $\lambda\ge0$. Hence
\[
 D^2\varphi=\lambda I
\]
in distributions on $\Omega$, and therefore
\[
 \varphi(x)=\frac{\lambda}{2}|x|^2+b\cdot x+c
\]
on $\Omega$. Thus $T=\nabla\varphi=\lambda x+b$ $\mu$-a.e.

Conversely, if $T(x)=\lambda x+b$ with $\lambda\ge0$, then for every $E$
\[
 P_ET(x)=\lambda P_Ex+P_Eb
\]
is the Brenier map from $(P_E)_\#\mu$ to $(P_E)_\#\nu$. Hence $g_E(T)=0$ for every $E$ and $\cD_k(\mu,\nu)=0$.
\end{proof}

\begin{remark}[Assumptions in the Grassmannian rigidity theorem]
Theorem~\ref{thm:grassmannian-rigidity} uses stronger assumptions than
Theorem~\ref{thm:rigidity-main}: the source is supported on a connected open
set, the density is positive a.e., and the Brenier potential is finite on that
domain. These assumptions are necessary because we works  with
the Hessian measure \(D^2\varphi\). 
\end{remark}

\section{The SPK inequality and stability}
\label{sec:spk}
In this section we identify the kernel of the ridge
defect on gradient fields,  and show that one-dimensional Fenchel gaps control the ridge defect of
the Brenier map. The SPK inequality then turns this ridge control into
\(L^2\)-stability modulo \(\mathcal A_d\). 
\subsection{Definition and spectral interpretation}

Let $\mu\in\cP_2(\R^d)$ and let
\[
 {\rm Tan}_\mu:=\overline{\{\nabla \varphi: \varphi\in C_c^\infty(\R^d)\}}^{L^2(\mu;\R^d)}.
\]
For $u\in L^2(\mu;\R^d)$ define the ridge defect by \eqref{eq:ridge-def-intro}. The sliced Poincar\'e--Korn constant (or sliced Poincar\'e--Korn spectral gap) is defined as
\[
 \kappa_{\mathrm{SPK}}(\mu)
 :=
 \inf_{
 \substack{u\in {\rm Tan}_\mu\\
 \dist_{L^2(\mu)}(u,\cA_d)>0}}
 \frac{\cR_\mu(u)}{\dist_{L^2(\mu)}^2(u,\cA_d)}.
\]
Note that $\cA_d$ is the image of the finite-dimensional vector space $\R\times\R^d$ under the linear map $(\lambda,b)\mapsto \lambda x+b$ into $L^2(\mu;\R^d)$. So it is closed  and nearest points in $\cA_d$ exist.

Because the average over directions carries a natural factor $1/d$, the normalized quantity
\[
 \overline\kappa_{\mathrm{SPK}}(\mu)=d\,\kappa_{\mathrm{SPK}}(\mu)
\]
is the quantity expected to be dimension-free in isotropic log-concave classes. For example, the sharp Gaussian theorem, Theorem~\ref{thm:gaussian-sk-intro}, gives
\[
 \overline\kappa_{\mathrm{SPK}}(\gamma_d)
 =
 \frac{d-1}{d+2}. 
\]

Equivalently, we say that $\mu$ satisfies sliced Poincar\'e--Korn inequality $\mathrm{SPK}(\kappa)$ if
\begin{equation*}
\label{eq:spk-condition}
 \dist_{L^2(\mu)}^2(u,\cA_d)
 \leq
 \frac1\kappa\cR_\mu(u)
 \qquad
 \forall u\in {\rm Tan}_\mu. \tag{SPK}
\end{equation*}

The terminology ``sliced Poincar\'e--Korn'' is motivated by the analogy with Korn-type inequalities in elasticity and classical Poincar\'e inequality. Classical Korn inequalities control vector fields modulo rigid motions by the symmetric part of the gradient, and are basic tools in linearized elasticity; see, for instance, Ciarlet's text \cite{Ciarlet1988} and Dacorogna's direct-methods treatment \cite{Dacorogna2008}. The theorem of Friesecke--James--M\"uller \cite{FrieseckeJamesMuller2002} gives a quantitative geometric rigidity estimate modulo rotations. In our setting the null space is not the Euclidean rigid-motion space but the homothetic affine family
\[
 \cA_d=\{\lambda x+b\},
\]
and the ``strain'' is replaced by an averaged conditional-variance defect over one-dimensional projections. So we adopt the name ``Poincar\'e--Korn".

Carrapatoso--Dolbeault--H\'erau--Mischler--Mouhot
\cite{CarrapatosoDolbeaultHerauMischlerMouhot2022} introduced related
weighted Poincar\'e--Korn inequalities; Courtade and Fathi
\cite{CourtadeFathi2025PK} proved
Gaussian optimality and stability for the Poincar\'e--Korn constant under moment constraints. Our sliced Poincar\'e--Korn inequality is different in
nature: it has a different defect and a different null space, but it belongs to the same general circle of Korn-type spectral gaps for probability measures.
\bigskip

\begin{lemma}[Continuity of the ridge defect]
\label{lem:ridge-continuity}
Let $\mu\in\cP_2(\R^d)$. For $\theta\in\Sd$ set
\[
 \mathcal H_\theta^\mu
 :=
 \{h(\theta\cdot x):h\in L^2(\mu_\theta)\}
 \subset L^2(\mu),
\]
and let $\Pi_\theta^\mu$ be the orthogonal projection onto this closed subspace. Then, for every $u\in L^2(\mu;\R^d)$, the ridge defect defined in \eqref{eq:ridge-inf-intro} can be represented by
\begin{equation}
\label{eq:ridge-projection-formula}
 \cR_\mu(u)
 =
 \int_{\Sd}
 \bigl\|(I-\Pi_\theta^\mu)(\theta\cdot u)\bigr\|_{L^2(\mu)}^2
 \dd\sigma(\theta).
\end{equation}
Moreover $\cR_\mu$ is continuous on $L^2(\mu;\R^d)$:
\begin{equation}
\label{eq:ridge-continuity-bound}
 \bigl|\cR_\mu(u)-\cR_\mu(v)\bigr|
 \le
 \bigl(\|u\|_{L^2(\mu)}+\|v\|_{L^2(\mu)}\bigr)
 \|u-v\|_{L^2(\mu)} .
\end{equation}

\end{lemma}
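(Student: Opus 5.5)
The plan has three parts: identify the inner infimum with a projection, obtain the continuity bound from a pointwise estimate in $\theta$, and check measurability in $\theta$.

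\emph{Projection formula.} Fix $\theta\in\Sd$. The map $h\mapsto h(\theta\cdot x)$ is an isometry from $L^2(\mu_\theta)$ into $L^2(\mu)$, because $\int|h(\theta\cdot x)|^2\dd\mu=\int|h|^2\dd\mu_\theta$. Its image $\mathcal H_\theta^\mu$ is therefore complete, hence a closed subspace. For $f=\theta\cdot u\in L^2(\mu)$, the infimum over $h$ in \eqref{eq:ridge-inf-intro} is $\dist^2_{L^2(\mu)}(f,\mathcal H_\theta^\mu)$, and this equals $\|(I-\Pi_\theta^\mu)f\|^2$ by the Hilbert projection theorem. The projection $\Pi^\mu_\theta f$ is exactly $\mathbb E_\mu[f\mid\theta\cdot X]$, since $\mathcal H^\mu_\theta=L^2(\sigma(\theta\cdot X))$ by Doob--Dynkin. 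This also shows that \eqref{eq:ridge-inf-intro} agrees with the conditional-variance form \eqref{eq:ridge-def-intro}.

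\emph{Continuity bound.} Write $Q_\theta:=I-\Pi^\mu_\theta$, an orthogonal projection, so $\|Q_\theta\|\le1$. For fixed $\theta$,
\[
 \bigl|\|Q_\theta(\theta\cdot u)\|^2-\|Q_\theta(\theta\cdot v)\|^2\bigr|
 \le \bigl(\|\theta\cdot u\|+\|\theta\cdot v\|\bigr)\,\|\theta\cdot(u-v)\|,
\]
using $a^2-b^2=(a+b)(a-b)$ and the reverse triangle inequality for the norms. Since $|\theta\cdot w(x)|\le|w(x)|$, each of these norms is bounded by the corresponding $L^2(\mu;\R^d)$ norm. Integrating against the probability measure $\sigma$ then gives \eqref{eq:ridge-continuity-bound}. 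One could sharpen this with Cauchy--Schwarz in $\theta$ and $\int|\theta\cdot w|^2\dd\sigma=|w|^2/d$, but that is not needed.

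\emph{Measurability (main obstacle).} The one genuinely delicate point is that $\theta\mapsto\|Q_\theta(\theta\cdot u)\|^2$ must be measurable for the integral to make sense.

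I would first handle $u$ bounded and continuous with $\mu$ compactly supported, or more generally $u\in C_b$. I would show that $\theta\mapsto\dist(\theta\cdot u,\mathcal H^\mu_\theta)$ is upper semicontinuous. The key observation is that for continuous bounded $h$, the map $\theta\mapsto\|\theta\cdot u-h(\theta\cdot x)\|$ is continuous by dominated convergence. Since continuous bounded $h$ are dense in $L^2(\mu_\theta)$, the distance is an infimum over a countable dense family of such $h$, chosen in $C_c(\R)$ and independent of $\theta$. An infimum of continuous functions is upper semicontinuous, hence Borel.

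For general $u\in L^2(\mu;\R^d)$, I would approximate by $u_n\in C_b$ in $L^2(\mu)$. The pointwise-in-$\theta$ version of the estimate in the continuity step shows that the integrands converge uniformly in $\theta$. A uniform limit of Borel functions is Borel, so the integrand is measurable for every $u$, which completes the proof.
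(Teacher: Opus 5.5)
Your proof is correct and follows the paper's argument: the isometry $h\mapsto h(\theta\cdot x)$ gives closedness, and the projection (conditional expectation) realizes the inner infimum as a squared distance. The bound then comes from $\|I-\Pi_\theta^\mu\|\le1$, $a^2-b^2=(a+b)(a-b)$ and $|\theta\cdot w|\le|w|$, integrated against $\sigma$.

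Your measurability step addresses a point the paper leaves implicit. It actually works directly for every $u\in L^2(\mu;\R^d)$: for fixed $h\in C_c(\R)$, dominated convergence with majorant $2|u|^2+2\|h\|_\infty^2$ already gives continuity in $\theta$. So the preliminary reduction to $u\in C_b$ and the uniform-limit step are unnecessary.
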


\begin{proof}
The map $h\mapsto h(\theta\cdot x)$ is an isometry from $L^2(\mu_\theta)$ into $L^2(\mu)$, hence $\mathcal H_\theta^\mu$ is closed. The conditional expectation $\mathbb E[\theta\cdot u(X)\mid \theta\cdot X]$ is exactly the orthogonal projection $\Pi_\theta^\mu(\theta\cdot u)$, which proves \eqref{eq:ridge-projection-formula}.

For a closed subspace $H$ of a Hilbert space, the map $f\mapsto \operatorname{dist}(f,H)$ is $1$-Lipschitz. Therefore, for each $\theta$,
\[
\begin{aligned}
&\left|
 \bigl\|(I-\Pi_\theta^\mu)(\theta\cdot u)\bigr\|_2^2
 -
 \bigl\|(I-\Pi_\theta^\mu)(\theta\cdot v)\bigr\|_2^2
\right|  \\
&\qquad\le
 \bigl(\|\theta\cdot u\|_2+\|\theta\cdot v\|_2\bigr)
 \|\theta\cdot(u-v)\|_2
 \le
 \bigl(\|u\|_2+\|v\|_2\bigr)\|u-v\|_2 .
\end{aligned}
\]
Integrating in $\theta$ gives \eqref{eq:ridge-continuity-bound}. 
\end{proof}

\begin{lemma}[Lipschitz continuity of the ridge residual]
\label{lem:ridge-residual-lipschitz}
For every $u,v\in L^2(\mu;\R^d)$,
\begin{equation}
\label{eq:ridge-sqrt-triangle}
 \sqrt{\cR_\mu(u+v)}
 \leq
 \sqrt{\cR_\mu(u)}+
 \sqrt{\cR_\mu(v)}.
\end{equation}
Moreover,
\begin{equation}
\label{eq:ridge-l2-bound}
 \sqrt{\cR_\mu(v)}
 \leq
 \frac1{\sqrt d}\|v\|_{L^2(\mu)}.
\end{equation}
\end{lemma}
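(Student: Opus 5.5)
The plan is to work direction by direction with the projection formula \eqref{eq:ridge-projection-formula} from Lemma~\ref{lem:ridge-continuity}, and then integrate over $\theta$. For fixed $\theta$, write $\|\cdot\|_2$ for the $L^2(\mu)$ norm and set
\[
N_\theta(u):=\bigl\|(I-\Pi_\theta^\mu)(\theta\cdot u)\bigr\|_2 .
\]
Both $u\mapsto\theta\cdot u$ and $I-\Pi_\theta^\mu$ are linear, and $I-\Pi_\theta^\mu$ is an orthogonal projection. Hence $N_\theta$ is a seminorm on $L^2(\mu;\R^d)$ with
\[
N_\theta(u)\le\|\theta\cdot u\|_2 .
\]
By \eqref{eq:ridge-projection-formula}, $\cR_\mu(u)=\int_{\Sd}N_\theta(u)^2\dd\sigma(\theta)$, so $\sqrt{\cR_\mu(u)}$ is the $L^2(\sigma)$ norm of the function $\theta\mapsto N_\theta(u)$.

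For \eqref{eq:ridge-sqrt-triangle}, I would first use the pointwise triangle inequality $N_\theta(u+v)\le N_\theta(u)+N_\theta(v)$. Then I would apply Minkowski's inequality in $L^2(\sigma)$:
\[
\Bigl(\int N_\theta(u+v)^2\dd\sigma\Bigr)^{1/2}\le \Bigl(\int (N_\theta(u)+N_\theta(v))^2\dd\sigma\Bigr)^{1/2}\le \sqrt{\cR_\mu(u)}+\sqrt{\cR_\mu(v)}.
\]

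For \eqref{eq:ridge-l2-bound}, I would combine $N_\theta(v)^2\le\int|\theta\cdot v(x)|^2\dd\mu(x)$ with Fubini and the identity $\int_{\Sd}|\theta\cdot z|^2\dd\sigma(\theta)=|z|^2/d$ from the introduction. This gives $\cR_\mu(v)\le\frac1d\|v\|_2^2$.

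The only point needing care is measurability of $\theta\mapsto N_\theta(u)$, so that Minkowski's inequality applies. This is already implicit in the definition of $\cR_\mu$; if one wants an argument, continuity in $\theta$ can be checked via approximation of $u$ by bounded continuous fields, or one can use Minkowski for upper integrals. I do not expect any step to be a real obstacle.
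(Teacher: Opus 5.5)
Your proposal is correct and is essentially the paper's proof. The paper also treats $\theta\mapsto\dist_{L^2(\mu)}(\theta\cdot u,\mathcal H_\theta^\mu)$ as a seminorm in $u$, obtains \eqref{eq:ridge-sqrt-triangle} by Minkowski's inequality in $L^2(\Sd,\sigma)$, and gets \eqref{eq:ridge-l2-bound} by taking $h=0$ and using $\int_{\Sd}|\theta\cdot z|^2\,\dd\sigma(\theta)=|z|^2/d$.

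One caution about your measurability aside: $\theta\mapsto N_\theta(u)$ need not be continuous. For $\mu=\frac12(\delta_{x_1}+\delta_{x_2})$ it can jump up at the directions $\theta\perp(x_1-x_2)$. It is, however, upper semicontinuous and hence Borel, because $N_\theta(u)^2=\|\theta\cdot u\|_2^2-\|\Pi_\theta^\mu(\theta\cdot u)\|_2^2$, and the last term equals $\sup_{g\in C_b(\R)}\bigl(2\langle\theta\cdot u,g(\theta\cdot x)\rangle_{L^2(\mu)}-\|g(\theta\cdot x)\|_{L^2(\mu)}^2\bigr)$, a supremum of functions that are continuous in $\theta$.
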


\begin{proof}
For each $\theta$, set
\[
 r_\theta(u):=
 \inf_{h\in L^2(\mu_\theta)}
 \|\theta\cdot u-h(\theta\cdot x)\|_{L^2(\mu)}.
\]
This is the distance from $\theta\cdot u$ to the closed linear subspace
$\mathcal H_\theta^\mu\subset L^2(\mu)$. Hence $r_\theta$ is a seminorm in
$u$ and satisfies the triangle inequality. Since
$\sqrt{\cR_\mu(u)}=\|r_\theta(u)\|_{L^2(\Sd,\sigma)}$, Minkowski's inequality
gives \eqref{eq:ridge-sqrt-triangle}. For \eqref{eq:ridge-l2-bound}, choose
$h=0$ in the definition of $r_\theta$ and use the normalized spherical identity
\[
 \int_{\Sd}|\theta\cdot v(x)|^2\,\dd\sigma(\theta)=\frac1d|v(x)|^2.
\]
After integration in $x$, this gives
$\cR_\mu(v)\le d^{-1}\|v\|_{L^2(\mu)}^2$.
\end{proof}

\begin{lemma}[Vanishing of the tangent defect]
\label{lem:tan-density-criterion}
Let \(\mu=\rho\mathcal L^d\in\mathcal P_2(\mathbb R^d)\), and let
\(T=\nabla\phi\in L^2(\mu;\mathbb R^d)\) for some convex potential \(\phi\).
If \(T\) can be approximated in \(L^2(\mu;\mathbb R^d)\) by gradients
\(\nabla\psi_j\) with \(\psi_j\in C_c^\infty(\mathbb R^d)\), then
\[
 \operatorname{dist}_{L^2(\mu)}(T,\operatorname{Tan}_\mu)=0.
\]

This condition is automatic in the following standard situations:
\begin{enumerate}
\item \(\mu\) is supported on a bounded Lipschitz domain \(\Omega\),
\(0<m\le\rho\le M<\infty\) on \(\Omega\), and
\(\phi\in W^{1,2}(\Omega)\);
\item \(\mu=\gamma_d\) and \(T=\nabla p\) for a polynomial \(p\).
\end{enumerate}
\end{lemma}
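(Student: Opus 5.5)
The first assertion holds by definition: ${\rm Tan}_\mu$ is the $L^2(\mu;\R^d)$-closure of $\{\nabla\psi:\psi\in C_c^\infty(\R^d)\}$. If $\nabla\psi_j\to T$ in $L^2(\mu)$, then $T\in{\rm Tan}_\mu$, so $\dist_{L^2(\mu)}(T,{\rm Tan}_\mu)=0$. The work is in verifying the approximation hypothesis in the two situations. In each case I will build smooth compactly supported potentials whose gradients converge in $L^2(\mu)$.

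\emph{Case 1 (bounded Lipschitz domain).} Since $m\le\rho\le M$ on $\Omega$ and $\mu$ lives on $\Omega$, the norms $L^2(\mu)$ and $L^2(\Omega,\cL^d)$ are equivalent on vector fields on $\Omega$. It therefore suffices to approximate $\nabla\phi$ in unweighted $L^2(\Omega)$.
\begin{enumerate}
\item Because $\Omega$ is a bounded Lipschitz domain, there is a bounded extension operator $E:W^{1,2}(\Omega)\to W^{1,2}(\R^d)$ (Calder\'on--Stein), and we may take $E\phi$ compactly supported by multiplying with a cutoff $\chi\equiv1$ near $\overline\Omega$.
\item Mollify: $\psi_j:=(\chi E\phi)*\eta_{1/j}\in C_c^\infty(\R^d)$. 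Then $\nabla\psi_j\to\nabla(\chi E\phi)$ in $L^2(\R^d)$, and the limit equals $\nabla\phi$ a.e. on $\Omega$.
\item By the upper density bound, $\int|\nabla\psi_j-T|^2\dd\mu\le M\|\nabla\psi_j-\nabla\phi\|_{L^2(\Omega)}^2\to0$.
\end{enumerate}
The only subtlety is that $T=\nabla\phi$ $\mu$-a.e. coincides with the Sobolev gradient. This holds because a convex function is locally Lipschitz in the interior of its domain, so its a.e. gradient is its weak gradient. Convexity is not otherwise needed.

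\emph{Case 2 (Gaussian, polynomial potential).} Take a cutoff $\chi_R(x)=\chi(x/R)$ with $\chi\in C_c^\infty$, $\chi\equiv1$ on $B_1$, and set $\psi_R:=\chi_R p\in C_c^\infty$. Then
\[
\nabla\psi_R-\nabla p=(\chi_R-1)\nabla p+R^{-1}(\nabla\chi)(x/R)\,p ,
\]
and both terms vanish on $B_R$. They are bounded pointwise by $C(|\nabla p|+|p|)$, which lies in $L^2(\gamma_d)$ since Gaussian measures have all polynomial moments. Dominated convergence gives $\nabla\psi_R\to\nabla p$ in $L^2(\gamma_d)$ as $R\to\infty$.

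The main obstacle is Case 1, specifically the extension step and the identification of the Brenier gradient with the weak gradient. Case 2 and the first assertion are routine.
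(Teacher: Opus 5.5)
Your proposal is correct and follows the paper's proof. The first assertion comes straight from the definition of $\operatorname{Tan}_\mu$. Case 1 rests on the density of restrictions of $C_c^\infty(\R^d)$ in $W^{1,2}(\Omega)$ together with $\rho\le M$; the paper simply cites this density, and your extension, cutoff and mollification steps prove it. Case 2 is the same cutoff argument with $\chi_R p$, and your dominated-convergence bound makes precise what the paper only calls ``Gaussian tail and polynomial growth''. Your remark that the pointwise gradient of the convex $\phi$ coincides with its Sobolev gradient on $\Omega$ addresses a point the paper leaves implicit.
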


\begin{proof}
The first assertion is the definition of $\operatorname{Tan}_\mu$. Under the bounded-domain assumptions, restrictions of functions in $C_c^\infty(\R^d)$ are dense in $W^{1,2}(\Omega)$. Hence one can choose $\psi_j\in C_c^\infty(\R^d)$ with $\|\nabla\psi_j-\nabla\phi\|_{L^2(\Omega)}\to0$. Since $\rho\le M$, convergence also holds in $L^2(\mu)$.

For the Gaussian statement, let $p$ be a polynomial and choose cut-offs $\chi_R\in C_c^\infty(\R^d)$ with $\chi_R=1$ on $B_R$, $\chi_R=0$ outside $B_{2R}$, and $|\nabla\chi_R|\le C/R$. Then $\chi_R p\in C_c^\infty(\R^d)$ and
\[
 \nabla(\chi_R p)-\nabla p
 =
 (\chi_R-1)\nabla p+p\nabla\chi_R.
\]
Both terms converge to zero in $L^2(\gamma_d)$ by the Gaussian tail and the polynomial growth of $p$ and $\nabla p$. Thus $\nabla p\in\operatorname{Tan}_{\gamma_d}$.
\end{proof}

The following  lemma  identifies the null space of the ridge defect,   and explains why the quotient by $\cA_d$ appears in the
SPK inequality.   See also Lemma \ref{lem:grassmannian_algebra}.

\begin{lemma}[Kernel of the ridge defect on weak gradient fields]
\label{lem:ridge-kernel-gradient}
Let $\Omega\subset\R^d$ be connected and open, and let $\mu=\rho\cL^d$ be a probability measure with $\mu(\Omega)=1$. Assume that $\rho$ is locally bounded below on $\Omega$. Let $v\in L^2(\mu;\R^d)$ have symmetric distributional derivative on $\Omega$ (equivalently, be a local distributional gradient). If the ridge defect 
$ \cR_\mu(v)=0$,
then $v(x)=\lambda x+b$ for $\mu$-a.e. $x\in\Omega$.

\end{lemma}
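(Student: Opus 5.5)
The plan is to imitate the Hessian-measure argument in the proof of Theorem~\ref{thm:grassmannian-rigidity}, with $k=1$, replacing $D^2\varphi$ by the distributional derivative $Dv$.

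\emph{Step 1: ridge factorization.} Since every integrand in \eqref{eq:ridge-projection-formula} is non-negative, $\cR_\mu(v)=0$ gives $(I-\Pi_\theta^\mu)(\theta\cdot v)=0$ for $\sigma$-a.e. $\theta$. For such $\theta$ there is $h_\theta\in L^2(\mu_\theta)$ with
\[
 \theta\cdot v(x)=h_\theta(\theta\cdot x)\qquad \mu\text{-a.e. } x.
\]
Because $\rho$ is locally bounded below on $\Omega$, this holds Lebesgue-a.e. on $\Omega$. Moreover, $v\in L^2(\mu)$ and $\rho\ge c_K>0$ on compacts $K$ give $v\in L^2_{\rm loc}(\Omega)$. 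All distributional manipulations below are therefore legitimate.

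\emph{Step 2: vanishing of mixed blocks.} Fix such a $\theta$ and take $w\perp\theta$. The function $\theta\cdot v$ depends only on $\theta\cdot x$, so $\partial_w(\theta\cdot v)=0$ in $\mathcal D'(\Omega)$. To check this, pair with $\partial_w\psi$ and apply Fubini in coordinates adapted to $\theta$: the integral of $\partial_w\psi$ along lines parallel to $w$ vanishes. Writing $Dv=(\partial_j v_i)$ as a matrix of distributions, this reads $w^TDv\,\theta=0$. Symmetry of $Dv$ also gives $\theta^T Dv\,w=0$. Thus $P_{\theta^\perp}Dv\,\theta=0$ in $\mathcal D'(\Omega)$ for $\sigma$-a.e. $\theta$.

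\emph{Step 3: from a.e. directions to all directions.} The map $\theta\mapsto \langle P_{\theta^\perp}Dv\,\theta,\psi\rangle$ is a polynomial, hence continuous, in $\theta$ for each test function $\psi$. So the identity holds for every $\theta\in\Sd$. Pairing with a fixed $\psi$ produces a constant symmetric matrix $M_\psi=\langle Dv,\psi\rangle$ with $P_{\theta^\perp}M_\psi\theta=0$ for all $\theta$, that is, every $\theta$ is an eigenvector of $M_\psi$. Hence $M_\psi=\alpha(\psi)I$; this is also Lemma~\ref{lem:grassmannian_algebra} with $k=1$. Consequently $\partial_jv_i=0$ for $i\ne j$ and $\partial_iv_i=\omega$ for all $i$, where $\omega$ is a single distribution.

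\emph{Step 4: constancy of $\omega$.} As in Theorem~\ref{thm:grassmannian-rigidity}, for $j\ne i$ (possible since $d\ge2$) we get
\[
 \partial_j\omega=\partial_j\partial_iv_i=\partial_i\partial_jv_i=0 .
\]
Thus $\nabla\omega=0$, and connectedness of $\Omega$ gives $\omega=\lambda$ constant. Then $D(v-\lambda x)=0$ on the connected set $\Omega$, so $v-\lambda x=b$ a.e.

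The main obstacle is the justification in Step 2 that a ridge function has zero transverse distributional derivative when it is only $L^2_{\rm loc}$ and $\Omega$ is not convex. I would handle this locally: on small cubes aligned with $(\theta,w)$ and contained in $\Omega$, the Fubini computation is elementary, and a partition of unity then globalizes it. Unlike in Theorem~\ref{thm:grassmannian-rigidity}, no sign of $\lambda$ is claimed here, and no convexity or measure-valued Hessian is needed.
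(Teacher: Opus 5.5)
Your proof is correct and follows essentially the same route as the paper's. The steps match: ridge factorization for a.e.\ $\theta$, vanishing of the transverse distributional derivative, reduction of each pairing $\langle Dv,\psi\rangle$ to a scalar matrix, and then $\nabla\omega=0$ via $\partial_j\partial_i v_i=\partial_i\partial_j v_i$; the only variation is that for the scalar-matrix step you use continuity in $\theta$ plus the every-vector-is-an-eigenvector argument, where the paper invokes Lemma~\ref{lem:grassmannian_algebra} with $k=1$. You are also right not to claim $\lambda\ge0$: the paper's closing monotonicity remark does not follow from $\cR_\mu(v)=0$ (for instance $v(x)=-x$ has zero ridge defect), and the lemma as stated does not need it.
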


\begin{proof}
Since \(\rho\) is locally bounded below, \(v\in L^1_{\rm loc}(\Omega)\).
The identity \(\cR_\mu(v)=0\) means that for \(\sigma\)-a.e. \(\theta\)
there exists \(h_\theta\in L^2_{\rm loc}\) such that
\[
 \theta\cdot v(x)=h_\theta(\theta\cdot x)
 \qquad\text{for a.e. }x\in\Omega.
\]
Hence, for every \(\eta\perp\theta\),
\[
 \partial_\eta(\theta\cdot v)=0
 \qquad\text{in }\mathcal D'(\Omega).
\]
Equivalently,
\[
 P_\theta Dv(I-P_\theta)=0
 \qquad\text{in }\mathcal D'(\Omega),
\]
where \(P_\theta=\theta\otimes\theta\). Here \(Dv\) denotes the
distributional derivative matrix of \(v\), which is symmetric by assumption.

Let \(\zeta\in C_c^\infty(\Omega)\). Pairing the preceding identity with
\(\zeta\), we obtain, for \(\sigma\)-a.e. \(\theta\),
\[
 P_\theta M_\zeta(I-P_\theta)=0,
 \qquad
 M_\zeta:=\langle Dv,\zeta\rangle .
\]
The matrix \(M_\zeta\) is symmetric. By Lemma~\ref{lem:grassmannian_algebra}
with \(k=1\), \(M_\zeta\) is a scalar multiple of the identity. Since this
holds for every test function \(\zeta\), there exists a scalar distribution
\(\omega\) such that
\[
 Dv=\omega I.
\]

Thus, in distributions,
\[
 \partial_i v_j=0 \quad (i\ne j),
 \qquad
 \partial_1 v_1=\cdots=\partial_d v_d=\omega.
\]
Fix \(j\) and choose \(i\ne j\). Then
\[
 \partial_j\omega
 =
 \partial_j\partial_i v_i
 =
 \partial_i\partial_j v_i
 =
 0.
\]
Hence \(\nabla\omega=0\) in \(\mathcal D'(\Omega)\). Since \(\Omega\) is
connected, \(\omega=\lambda\) for some constant \(\lambda\in\mathbb R\).
Therefore
\[
 D(v-\lambda x)=0
\]
in distributions, and so \(v-\lambda x=b\) a.e. on \(\Omega\). Thus
\(v(x)=\lambda x+b\).   Since \(v\) has zero ridge defect, the
one-dimensional monotonicity along directions gives \(\lambda\ge0\).
\end{proof}

\subsection{Fenchel-gap stability in one dimension}

The following lemma is used to pass from the one-dimensional deficit to the
ridge defect.

\begin{lemma}[One-dimensional Fenchel gap]
\label{lem:fenchel-gap}
Let $A,B$ be real-valued random variables in $L^2$, and let $\tau$ be the monotone transport map from the law of $A$ to the law of $B$. If $\Lip(\tau)\le\Lambda$, then
\begin{equation}
\label{eq:fenchel-gap}
 \mathbb E|B-\tau(A)|^2
 \leq
 \Lambda\left(
 \mathbb E|A-B|^2-W_2^2(\Law(A),\Law(B))
 \right).
\end{equation}
\end{lemma}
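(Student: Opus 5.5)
The plan is a Fenchel--Young argument in one dimension. Since $\tau$ is monotone and $\Lambda$-Lipschitz, it is the derivative of a convex function $f:\R\to\R$ with $f''\le\Lambda$ in the distributional sense. We may assume $\Lambda>0$; if $\Lambda=0$, $\tau$ is constant, so $B=\tau(A)$ a.s. and there is nothing to prove. Let $f^*$ be the Legendre transform of $f$. Because $f'$ is $\Lambda$-Lipschitz, $f^*$ is $\frac1\Lambda$-strongly convex. This gives the quantitative Fenchel--Young inequality
\[
 f(a)+f^*(b)-ab\;\ge\;\frac1{2\Lambda}|b-f'(a)|^2\qquad\forall a,b\in\R .
\]
The inequality follows by writing $f(a)+f^*(b)-ab=f^*(b)-f^*(f'(a))-a\,(b-f'(a))$ and noting that $a\in\partial f^*(f'(a))$. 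Strong convexity of $f^*$ then bounds the right-hand side below by $\frac1{2\Lambda}|b-f'(a)|^2$. The range of $f'$ may be an interval rather than all of $\R$; this is handled by the fact that $f^*$ is still strongly convex where finite, and $f^*(b)=+\infty$ for $b$ outside the closed range. On that set the inequality is trivial, and the event has probability zero anyway.

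Next I take expectations with $a=A$ and $b=B$. The key identity is that the monotone coupling $(A',\tau(A'))$, with $A'\sim\Law(A)$, achieves equality in Fenchel--Young. Hence
\[
 \mathbb E[f(A)]+\mathbb E[f^*(B)]=\mathbb E[A'\tau(A')]=\sup_{\pi\in\Pi}\int ab\,\dd\pi ,
\]
where the last equality holds because the monotone coupling is optimal for the quadratic cost. Here I use that $\mathbb E f^*(B)$ depends only on $\Law(B)=\Law(\tau(A'))$.

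Subtracting $\mathbb E[AB]$ gives
\[
 \mathbb E[f(A)+f^*(B)-AB]=\sup_\pi\int ab\,\dd\pi-\mathbb E[AB]=\tfrac12\bigl(\mathbb E|A-B|^2-W_2^2\bigr),
\]
since the second moments cancel. Combining this with the pointwise inequality yields $\frac1{2\Lambda}\mathbb E|B-\tau(A)|^2\le\frac12(\mathbb E|A-B|^2-W_2^2)$, which is exactly \eqref{eq:fenchel-gap}.

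The main obstacle is integrability and domain issues. I need $f(A)$ and $f^*(B)$ to be integrable so that the splitting of expectations is legitimate. For $f$ this follows from $|f(a)|\le C(1+a^2)$, which holds because $f'$ is Lipschitz. For $f^*$ it follows from $f^*(b)\ge -f(0)$ together with the upper bound $f^*(b)\le ab-f(a)$ evaluated along the monotone coupling, using $L^2$ moments. I would also normalize $f(0)=0$ and handle atoms of $\Law(A)$: there $\tau$ should be taken as the quantile composition, a left-continuous version that is still Lipschitz on the support. To avoid all of these technicalities, the alternative is to prove the inequality first for bounded, nicely behaved laws and then conclude by approximation.
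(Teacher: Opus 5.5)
Your proposal is correct and follows the paper's proof. Both arguments write the deficit as $2\,\mathbb E[f(A)+f^*(B)-AB]$ with $f'=\tau$, and both use $\Lambda$-smoothness of $f$ to get the pointwise bound $f(a)+f^*(b)-ab\ge \tfrac{1}{2\Lambda}|b-f'(a)|^2$. The differences are minor: you derive the bound from $1/\Lambda$-strong convexity of $f^*$, whereas the paper evaluates the supremum defining $f^*$ at $x=a+(b-f'(a))/\Lambda$, and you verify the duality identity and integrability directly where the paper cites Kantorovich duality.
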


\begin{proof}
If $\Lambda=0$, then $\tau$ is constant. Hence the law of $B$ is a point mass, so $B=\tau(A)$ a.s. and the claim is trivial. Assume henceforth that $\Lambda>0$.

Let $f$ be a convex potential such that $f'=\tau$; since $\tau$ is monotone and $\Lambda$-Lipschitz, we take the continuous representative
\[
 f(t)=f(0)+\int_0^t\tau(s)\,\dd s.
\]
Let $f^*$ be the Legendre transform of $f$.   Kantorovich duality for the quadratic cost gives
\[
 \mathbb E|A-B|^2-W_2^2(\Law(A),\Law(B))
 =
 2\mathbb E\bigl[f(A)+f^*(B)-AB\bigr].
\]
The expression in brackets is called the Fenchel gap. We use the elementary consequence of convex $C^{1,1}$ smoothness
\begin{equation}
\label{eq:fenchel-pointwise-smooth}
 f(a)+f^*(b)-ab
 \geq
 \frac1{2\Lambda}|b-f'(a)|^2
 \qquad a,b\in\R.
\end{equation}
Indeed, the upper Taylor bound
\[
 f(x)\le f(a)+f'(a)(x-a)+\frac\Lambda2|x-a|^2
\]
implies, by evaluating the supremum in the definition of $f^*$ at $x=a+(b-f'(a))/\Lambda$,
\[
 f^*(b)
 \geq
 b\left(a+\frac{b-f'(a)}{\Lambda}\right)
 -f(a)-\frac{f'(a)(b-f'(a))}{\Lambda}
 -\frac{|b-f'(a)|^2}{2\Lambda}.
\]
This is exactly \eqref{eq:fenchel-pointwise-smooth}. Applying it with $a=A$ and $b=B$, and using $f'=\tau$, gives
\[
 f(A)+f^*(B)-AB
 \geq
 \frac1{2\Lambda}|B-\tau(A)|^2.
\]
Taking expectations proves \eqref{eq:fenchel-gap}. 
\end{proof}

\subsection{Stability under SPK}

We now prove Theorem~\ref{thm:spk-main}.

\begin{proof}[Proof of Theorem~\ref{thm:spk-main}]
For each direction $\theta$, applying Lemma  \ref{lem:fenchel-gap} with
\[
 A=\theta\cdot X,
 \qquad
 B=\theta\cdot T(X)
\]
gives 
\begin{equation}
\label{eq:fenchel-direction}
 \int |\theta\cdot T(x)-\tau_\theta(\theta\cdot x)|^2\,\dd\mu(x)
 \leq
\Lambda\left(\int |\theta\cdot(T(x)-x)|^2\,\dd\mu(x)
 -W_2^2(\mu_\theta,\nu_\theta) \right)= \Lambda\,g_\theta(T).
\end{equation}
whenever $\Lip(\tau_\theta)\le\Lambda$. Since $h=\tau_\theta$ is an admissible ridge function,
\[
 \inf_h\int|\theta\cdot T(x)-h(\theta\cdot x)|^2\,\dd\mu(x)
 \leq
 \Lambda g_\theta(T).
\]
Integrating in $\theta$ yields
\begin{equation}
\label{eq:ridge-by-deficit}
 \cR_\mu(T)
 \leq
 \Lambda\cD(\mu,\nu).
\end{equation}

Let $S\in {\rm Tan}_\mu$. By the triangle inequality and the SPK inequality,
\[
\begin{aligned}
 \dist_{L^2(\mu)}(T,\cA_d)
 &\leq
 \|T-S\|_{L^2(\mu)}+
 \dist_{L^2(\mu)}(S,\cA_d)  \\
 &\leq
 \|T-S\|_{L^2(\mu)}+
 \kappa_{\mathrm{SPK}}(\mu)^{-1/2}\sqrt{\cR_\mu(S)}.
\end{aligned}
\]
Using Lemma~\ref{lem:ridge-residual-lipschitz} with $S=T+(S-T)$ gives
\[
 \sqrt{\cR_\mu(S)}
 \leq
 \sqrt{\cR_\mu(T)}+
 \sqrt{\cR_\mu(S-T)}
 \leq
 \sqrt{\cR_\mu(T)}+
 \frac1{\sqrt d}\|S-T\|_{L^2(\mu)}.
\]
Combining this with \eqref{eq:ridge-by-deficit} yields
\[
 \dist_{L^2(\mu)}(T,\cA_d)
 \leq
 \sqrt{\frac{\Lambda}{\kappa_{\mathrm{SPK}}(\mu)}}\,\cD(\mu,\nu)^{1/2}
 +
 \left(1+\frac1{\sqrt{d\,\kappa_{\mathrm{SPK}}(\mu)}}\right)
 \|T-S\|_{L^2(\mu)}.
\]
Taking the infimum over $S\in {\rm Tan}_\mu$ proves \eqref{eq:spk-main-stab}. If $T\in {\rm Tan}_\mu$, the tangent defect vanishes, and squaring gives \eqref{eq:spk-main-stab-tangent}.
\end{proof}

\subsection{On the one-dimensional Lipschitz parameter}
\label{subsec:lambda}

The parameter
\[
 \Lambda=\esssup_{\theta\in\Sd}\Lip(\tau_\theta)
\]
should be regarded as a one-dimensional regularity  for the target marginals. It is not part of the high-dimensional SPK spectral gap.   Lemma~\ref{lem:fenchel-gap} uses it only to turn the Fenchel gap into an $L^2$ distance from the monotone graph.

In concrete situations $\Lambda$ can often be bounded independently of the dimension. For instance, suppose the source is the standard Gaussian and the target has density $e^{-W}$ with
\[
 \nabla^2 W\ge \alpha I_d
\]
for some $\alpha>0$. By the Pr\'ekopa theorem, each one-dimensional marginal $\nu_\theta$ is again $\alpha$-strongly log-concave on $\R$. Caffarelli's contraction theorem~\cite{Caffarelli2000,FathiGozlanProdhomme2020} in dimension one then implies that the monotone map from $N(0,1)$ to $\nu_\theta$ satisfies
\begin{equation}
\label{eq:caffarelli-lambda}
 \Lip(\tau_\theta)\le \alpha^{-1/2}
 \qquad\text{for every }\theta.
\end{equation}
Thus in the Gaussian-source case, uniformly log-concave targets have a universal projected Lipschitz scale. In particular, if $\alpha\ge1$, all projected rearrangements are contractions.

There is also a direct one-dimensional density criterion. If $p_\theta$ and $q_\theta$ are the densities of $\mu_\theta$ and $\nu_\theta$, and the monotone map $\tau_\theta$ is differentiable, then
\[
 \tau_\theta'(s)
 =
 \frac{p_\theta(s)}{q_\theta(\tau_\theta(s))}.
\]
Hence a uniform lower bound on the target projected densities along the monotone image, relative to the source projected densities, gives a bound for $\Lambda$. The uniform Lipschitz assumption in Theorem~\ref{thm:spk-main} may therefore be replaced by any condition that gives the strong convexity of the one-dimensional dual potentials required in Lemma~\ref{lem:fenchel-gap}. We use the Lipschitz form for simplicity.

\section{Stability for Gaussian measures}\label{sec:gaussian}

Let $\gamma_d=N(0,I_d)$ be the standard Gaussian and $X\sim\gamma_d$.
Let $A=A^T$ and $b\in\R^d$, and set
\[
 T(x)=Ax+b.
\]
For affine gradient maps the ridge defect can be computed exactly.

\begin{proposition}[Exact Gaussian linear calculation]
\label{prop:gaussian-linear-exact}
Let $A=A^T$. Then
\begin{equation}
\label{eq:linear-ridge-exact}
 \cR_{\gamma_d}(Ax+b)
 =
 \frac{d\Tr(A^2)-(\Tr A)^2}{d(d+2)}.
\end{equation}
Moreover,
\begin{equation}
\label{eq:linear-dist-exact}
 \inf_{\lambda,b_0}\int |Ax+b-(\lambda x+b_0)|^2\,\dd\gamma_d(x)
 =
 \Tr(A^2)-\frac{(\Tr A)^2}{d}.
\end{equation}
Consequently,
\begin{equation}
\label{eq:linear-exact-ratio}
 \cR_{\gamma_d}(Ax+b)
 =
 \frac1{d+2}
 \inf_{\lambda,b_0}\|Ax+b-(\lambda x+b_0)\|_{L^2(\gamma_d)}^2.
\end{equation}
\end{proposition}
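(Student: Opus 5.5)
For fixed $\theta$, the random vector $X\sim\gamma_d$ splits as $X=(\theta\cdot X)\theta+X^\perp$, where $X^\perp=P_{\theta^\perp}X$ is independent of $\theta\cdot X$ and standard Gaussian on $\theta^\perp$. We can therefore compute the conditional variance in \eqref{eq:ridge-def-intro} exactly. Write
\[
\theta\cdot(AX+b)=(\theta\cdot A\theta)(\theta\cdot X)+(A\theta)^{\perp}\cdot X^\perp+\theta\cdot b,
\qquad (A\theta)^\perp:=P_{\theta^\perp}A\theta,
\]
using the symmetry of $A$ to write $\theta\cdot AX=A\theta\cdot X$. The first and last terms are measurable with respect to $\theta\cdot X$. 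The middle term is independent of $\theta\cdot X$ and centered. Hence
\[
\Var(\theta\cdot(AX+b)\mid\theta\cdot X)=|P_{\theta^\perp}A\theta|^2=|A\theta|^2-(\theta\cdot A\theta)^2 .
\]
This shows that $\cR_{\gamma_d}(Ax+b)=\int_{\Sd}\bigl(|A\theta|^2-(\theta^TA\theta)^2\bigr)\dd\sigma(\theta)$, and that $b$ plays no role.

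Next we average over the sphere using the standard moment identities. The second moment is $\int\theta\otimes\theta\,\dd\sigma=I/d$, which gives $\int|A\theta|^2\dd\sigma=\Tr(A^2)/d$. For the fourth moment we use $\int\theta_i\theta_j\theta_k\theta_l\dd\sigma=\frac{\delta_{ij}\delta_{kl}+\delta_{ik}\delta_{jl}+\delta_{il}\delta_{jk}}{d(d+2)}$. Combined with the symmetry of $A$, this gives
\[
\int(\theta^TA\theta)^2\dd\sigma=\frac{(\Tr A)^2+2\Tr(A^2)}{d(d+2)}.
\]
Subtracting,
\[
\frac{\Tr(A^2)}{d}-\frac{(\Tr A)^2+2\Tr(A^2)}{d(d+2)}=\frac{d\Tr(A^2)-(\Tr A)^2}{d(d+2)},
\]
which is \eqref{eq:linear-ridge-exact}. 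The fourth-moment constant can be checked, or derived, by writing $\theta=G/|G|$ with $G$ standard Gaussian and using the independence of $|G|$ and $G/|G|$: since $\mathbb E|G|^4=d(d+2)$ and $\mathbb E G_iG_jG_kG_l$ is given by the Isserlis formula, the constant follows. This is the only step requiring care, and it is routine.

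For \eqref{eq:linear-dist-exact}, expand
\[
\mathbb E|(A-\lambda I)X+(b-b_0)|^2=\|A-\lambda I\|_{\mathrm{HS}}^2+|b-b_0|^2,
\]
using that $X$ is centered with identity covariance. The minimum is attained at $b_0=b$ and $\lambda=\Tr A/d$, which is the Hilbert--Schmidt projection of $A$ onto the span of $I$. The minimal value is $\Tr(A^2)-(\Tr A)^2/d$. Finally, \eqref{eq:linear-exact-ratio} follows by comparing the two formulas, since $\frac{d\Tr(A^2)-(\Tr A)^2}{d(d+2)}=\frac1{d+2}\bigl(\Tr(A^2)-\frac{(\Tr A)^2}{d}\bigr)$.

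As a consistency check, the conditional variance can also be written as $\|P_\theta A(I-P_\theta)\|_{\mathrm{HS}}^2$. Lemma~\ref{lem:grassmannian_algebra} with $k=1$ then gives the constant $\frac{d-1}{(d-1)(d+2)}=\frac1{d+2}$ times $\|A-\frac{\Tr A}{d}I\|_{\mathrm{HS}}^2$, which agrees with the formula above.
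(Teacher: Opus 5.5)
Your proposal is correct and matches the paper's proof step for step: the same splitting $X=(\theta\cdot X)\theta+X^\perp$ giving conditional variance $|A\theta|^2-(\theta^TA\theta)^2$, the same second- and fourth-moment spherical identities, and the same minimizers $b_0=b$, $\lambda=\Tr A/d$. Two of your additions are ones the paper only alludes to: deriving the fourth-moment constant by writing $\theta=G/|G|$ for a standard Gaussian $G$, and the cross-check against Lemma~\ref{lem:grassmannian_algebra} with $k=1$.
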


\begin{proof}
Fix $\theta\in\Sd$ and decompose
\[
 X=Z\theta+Y,
 \qquad
 Z=\theta\cdot X\sim N(0,1),
 \qquad
 Y\sim N(0,I_d-\theta\otimes\theta),
\]
with $Z$ and $Y$ independent. Then
\[
 \theta\cdot T(X)=Z\theta^TA\theta+\theta^TA Y+\theta\cdot b.
\]
Thus
\[
 \Var(\theta\cdot T(X)\mid \theta\cdot X)
 =
 \mathbb E[(\theta^TA Y)^2]
 =
 |A\theta|^2-(\theta^TA\theta)^2.
\]
Using
\[
 \int_{\Sd}|A\theta|^2\,\dd\sigma(\theta)=\frac{\Tr(A^2)}d
\]
and the fourth-moment formula
\[
 \int_{\Sd}(\theta^TA\theta)^2\,\dd\sigma(\theta)
 =
 \frac{(\Tr A)^2+2\Tr(A^2)}{d(d+2)},
\]
which is the degree-two instance of the standard spherical moment/Funk--Hecke computation recalled in Appendix~\ref{app:tensor} (see also Lemma \ref{lem:grassmannian_algebra}). We obtain \eqref{eq:linear-ridge-exact}. The optimal translation in \eqref{eq:linear-dist-exact} is $b_0=b$, and the optimal scalar is $\lambda=\Tr(A)/d$. This gives \eqref{eq:linear-dist-exact} and \eqref{eq:linear-exact-ratio}.
\end{proof}

\subsection{Sharp SPK for standard Gaussian}

Let ${\rm Tan}_{\gamma_d}$ be the tangent space at $\gamma_d$. We give the
proof in some detail because this is the explicit case in which the SPK
constant is computed.

We use probabilists' Hermite polynomials, Wick tensors, and the Wiener--It\^o chaos decomposition; these standard conventions may be found, for example, in Janson's account of Gaussian Hilbert spaces \cite{Janson1997} or Nualart's text on Malliavin calculus \cite{Nualart2006}. If $A_n\in\mathrm{Sym}^n(\R^d)$, write
\[
 \psi_n(x)=\langle A_n,\Wick{x^{\otimes n}}\rangle
\]
for the corresponding element of the $n$-th homogeneous Wiener chaos. Then
\begin{equation}
\label{eq:wick-gradient-norm}
 \|\nabla\psi_n\|_{L^2(\gamma_d)}^2
 =
 n\,n!\,\|A_n\|^2.
\end{equation}
For each $\theta\in\Sd$ let $P_\theta$ denote conditional expectation with respect to $\theta\cdot X$.

\begin{lemma}[Projection of one Gaussian chaos]
\label{lem:gaussian-chaos-projection}
Let $\psi_n=\langle A_n,\Wick{X^{\otimes n}}\rangle$ with $n\ge1$. Then
\begin{equation}
\label{eq:conditional-chaos-formula}
 P_\theta(\theta\cdot\nabla\psi_n)
 =
 n\,(A_n:\theta^{\otimes n})\,H_{n-1}(\theta\cdot X),
\end{equation}
where $H_{n-1}$ is the one-dimensional probabilists' Hermite polynomial. Consequently,
\begin{equation}
\label{eq:conditional-chaos-norm}
 \|P_\theta(\theta\cdot\nabla\psi_n)\|_{L^2(\gamma_d)}^2
 =
 n\,n!\,(A_n:\theta^{\otimes n})^2.
\end{equation}
\end{lemma}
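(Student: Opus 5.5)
The approach is to compute $\theta\cdot\nabla\psi_n$ explicitly in Wick form and then project onto functions of $Z:=\theta\cdot X$ by orthogonality of chaoses. First, the Wick gradient identity $\nabla\Wick{x^{\otimes n}}=n\,\Wick{x^{\otimes(n-1)}}\otimes I$ gives
\[
 \theta\cdot\nabla\psi_n=n\,\langle A_n\cdot\theta,\Wick{X^{\otimes(n-1)}}\rangle ,
\]
where $A_n\cdot\theta\in\mathrm{Sym}^{n-1}(\R^d)$ denotes contraction of one slot with $\theta$. This identity can be checked on the generating function $\Wick{e^{\langle s,x\rangle}}=e^{\langle s,x\rangle-|s|^2/2}$. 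So $\theta\cdot\nabla\psi_n$ lies in the $(n-1)$-th chaos with kernel $B:=n\,A_n\cdot\theta$.

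Next we decompose $X=Z\theta+Y$ with $Y\perp\theta$ independent, exactly as in the proof of Proposition~\ref{prop:gaussian-linear-exact}. For orthonormal directions Wick powers factorize, so
\[
 \Wick{X^{\otimes m}}=\sum_{j=0}^m\binom{m}{j}\mathrm{Sym}\bigl(H_j(Z)\theta^{\otimes j}\otimes\Wick{Y^{\otimes(m-j)}}\bigr).
\]
Every term with $m-j\ge1$ is a Wick polynomial in the Gaussian vector $Y$, which is independent of $Z$ and has mean zero. Its conditional expectation given $Z$ therefore vanishes. Only the $j=m$ term survives, which gives
\[
 P_\theta\langle B,\Wick{X^{\otimes m}}\rangle=(B:\theta^{\otimes m})H_m(Z).
\]
A quicker route to the same identity is to take the conditional expectation of the generating function, $\mathbb E[e^{\langle s,X\rangle-|s|^2/2}\mid Z]=e^{(s\cdot\theta)Z-(s\cdot\theta)^2/2}$, and read off coefficients. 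I would present this generating-function argument as the main step, since it avoids the symmetrization bookkeeping. With $m=n-1$ and $B:\theta^{\otimes(n-1)}=n\,A_n:\theta^{\otimes n}$, this yields \eqref{eq:conditional-chaos-formula}.

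Finally, \eqref{eq:conditional-chaos-norm} follows from $\mathbb E[H_{n-1}(Z)^2]=(n-1)!$ for $Z\sim N(0,1)$, since $n^2(n-1)!=n\,n!$.

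The only delicate point is getting the normalization of the conditional-expectation identity right. The cleanest way is to work with Wick exponentials and differentiate $n-1$ times in $s$ in the direction encoded by $A_n$. This involves no analytic difficulty, because all objects are polynomials in $L^2(\gamma_d)$.
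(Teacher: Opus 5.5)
Your proposal is correct and follows essentially the same route as the paper: write $\theta\cdot\nabla\psi_n=n\langle A_n(\theta,\cdot,\ldots,\cdot),\Wick{X^{\otimes(n-1)}}\rangle$, use $\mathbb E[\Wick{X^{\otimes(n-1)}}\mid \theta\cdot X]=H_{n-1}(\theta\cdot X)\,\theta^{\otimes(n-1)}$, and finish with $\|H_{n-1}\|_{L^2(\gamma_1)}^2=(n-1)!$. The only difference is that the paper simply asserts the conditional-expectation identity as a chaos projection, while your generating-function computation $\mathbb E[e^{\langle s,X\rangle-|s|^2/2}\mid Z]=e^{(s\cdot\theta)Z-(s\cdot\theta)^2/2}$ (followed by polarization) actually justifies it, with the normalizations consistent with the paper's.
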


\begin{proof}
The directional component of the gradient is
\[
 \theta\cdot\nabla\psi_n
 =
 n\,\bigl\langle A_n(\theta,\cdot,\ldots,\cdot),\Wick{X^{\otimes(n-1)}}\bigr\rangle,
\]
where $A_n(\theta,\cdot,\ldots,\cdot)$ denotes contraction of one tensor index against $\theta$. The conditional expectation of a Wick monomial onto the one-dimensional Gaussian variable $Z=\theta\cdot X$ is its orthogonal projection onto the chaos generated by $Z$:
\[
 \mathbb E\bigl[\Wick{X^{\otimes(n-1)}}\mid Z\bigr]
 =
 H_{n-1}(Z)\,\theta^{\otimes(n-1)}.
\]
Contracting with $A_n(\theta,\cdot,\ldots,\cdot)$ gives \eqref{eq:conditional-chaos-formula}. Since
\[
 \|H_{n-1}(Z)\|_{L^2}^2=(n-1)!,
\]
we obtain
\[
 \|P_\theta(\theta\cdot\nabla\psi_n)\|_2^2
 =
 n^2(A_n:\theta^{\otimes n})^2 (n-1)!
 =
 n\,n!\,(A_n:\theta^{\otimes n})^2.
\]
\end{proof}

\begin{lemma}[Chaos contribution to the ridge defect]
\label{lem:gaussian-chaos-ridge}
For $\psi_n=\langle A_n,\Wick{X^{\otimes n}}\rangle$,
\begin{equation}
\label{eq:chaos-ridge-expanded}
 \int_{\Sd}\|(I-P_\theta)(\theta\cdot\nabla\psi_n)\|_{L^2(\gamma_d)}^2\,\dd\sigma(\theta)
 =
 n\,n!\left[
 \frac1d\|A_n\|^2
 -
 \int_{\Sd}(A_n:\theta^{\otimes n})^2\,\dd\sigma(\theta)
 \right].
\end{equation}
\end{lemma}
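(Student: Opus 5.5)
The identity follows from Pythagoras for the orthogonal projection $P_\theta$, integrated over the sphere. Since $P_\theta$ is the conditional expectation onto $\sigma(\theta\cdot X)$, it is an orthogonal projection in $L^2(\gamma_d)$. Hence for each fixed $\theta$
\[
 \|(I-P_\theta)(\theta\cdot\nabla\psi_n)\|_{L^2(\gamma_d)}^2
 =
 \|\theta\cdot\nabla\psi_n\|_{L^2(\gamma_d)}^2
 -
 \|P_\theta(\theta\cdot\nabla\psi_n)\|_{L^2(\gamma_d)}^2 .
\]
The second term is given explicitly by \eqref{eq:conditional-chaos-norm} of Lemma~\ref{lem:gaussian-chaos-projection}: it equals $n\,n!\,(A_n:\theta^{\otimes n})^2$. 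Integrating over $\theta$ against $\sigma$ produces the second term on the right-hand side of \eqref{eq:chaos-ridge-expanded}.

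For the first term, I exchange the order of integration by Fubini and Tonelli; everything is nonnegative and $\nabla\psi_n\in L^2(\gamma_d)$ since it is a polynomial. Then I apply the spherical identity $\int_{\Sd}|\theta\cdot z|^2\,\dd\sigma(\theta)=\frac1d|z|^2$ pointwise with $z=\nabla\psi_n(x)$. This gives
\[
 \int_{\Sd}\|\theta\cdot\nabla\psi_n\|_{L^2(\gamma_d)}^2\,\dd\sigma(\theta)
 =\frac1d\|\nabla\psi_n\|_{L^2(\gamma_d)}^2
 =\frac1d\,n\,n!\,\|A_n\|^2,
\]
where the last step is the gradient norm formula \eqref{eq:wick-gradient-norm}. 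Subtracting the two integrated terms and factoring out $n\,n!$ yields \eqref{eq:chaos-ridge-expanded}.

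There is no real obstacle here. The only points to check are measurability in $\theta$ of $\theta\mapsto\|P_\theta(\theta\cdot\nabla\psi_n)\|^2$, which is clear since it is the polynomial $n\,n!(A_n:\theta^{\otimes n})^2$, and that the Pythagorean split is legitimate for every $\theta$, which holds because the integrand lies in $L^2(\gamma_d)$. All the substantive content, namely the conditional expectation of Wick tensors and the norm of $H_{n-1}$, is already contained in Lemma~\ref{lem:gaussian-chaos-projection} and in \eqref{eq:wick-gradient-norm}.
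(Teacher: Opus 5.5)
Your proposal is correct and matches the paper's proof essentially step for step. Both use Pythagoras for the orthogonal projection $P_\theta$, then the spherical identity $\int_{\Sd}|\theta\cdot z|^2\,\dd\sigma(\theta)=\frac1d|z|^2$ together with \eqref{eq:wick-gradient-norm} for the first term, and \eqref{eq:conditional-chaos-norm} for the projected term. Your added remarks on Tonelli and measurability in $\theta$ are extra care that the paper leaves implicit.
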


\begin{proof}
First,
\[
 \int_{\Sd}\|\theta\cdot\nabla\psi_n\|_{L^2(\gamma_d)}^2\,\dd\sigma(\theta)
 =
 \frac1d\|\nabla\psi_n\|_{L^2(\gamma_d)}^2
 =
 \frac{n\,n!}{d}\|A_n\|^2
\]
by \eqref{eq:wick-gradient-norm} and the normalization of $\sigma$. Second, conditional expectation is an orthogonal projection, so
\[
 \|(I-P_\theta)(\theta\cdot\nabla\psi_n)\|_2^2
 =
 \|\theta\cdot\nabla\psi_n\|_2^2
 -
 \|P_\theta(\theta\cdot\nabla\psi_n)\|_2^2.
\]
Integrating in $\theta$ and using Lemma~\ref{lem:gaussian-chaos-projection} gives \eqref{eq:chaos-ridge-expanded}.
\end{proof}

\begin{lemma}[Polynomial-gradient density for the Gaussian tangent space]
\label{lem:gaussian-polynomial-density}
The space of polynomial gradients is dense in $\operatorname{Tan}_{\gamma_d}$ with respect to the $L^2(\gamma_d;\R^d)$ norm.
\end{lemma}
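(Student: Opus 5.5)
Since ${\rm Tan}_{\gamma_d}$ is by definition the $L^2(\gamma_d;\R^d)$-closure of $\{\nabla\varphi:\varphi\in C_c^\infty(\R^d)\}$, it suffices to show that every $\nabla\varphi$ with $\varphi\in C_c^\infty(\R^d)$ can be approximated in $L^2(\gamma_d;\R^d)$ by polynomial gradients $\nabla p$. The converse inclusion, that polynomial gradients lie in ${\rm Tan}_{\gamma_d}$, is already item 2 of Lemma~\ref{lem:tan-density-criterion}. Together these give density.

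The natural tool is the Wiener--It\^o (Hermite) chaos expansion of $\varphi$ itself. Fix $\varphi\in C_c^\infty(\R^d)$ and expand $\varphi=\sum_{n\ge0}\psi_n$ in $L^2(\gamma_d)$, where $\psi_n=\langle A_n,\Wick{X^{\otimes n}}\rangle$ is the projection onto the $n$-th chaos. Let $p_N:=\sum_{n\le N}\psi_n$, which is a polynomial. I will show $\nabla p_N\to\nabla\varphi$ in $L^2(\gamma_d;\R^d)$.

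The key fact is that the gradient acts diagonally on chaoses. Each partial derivative $\partial_i$ maps the $n$-th chaos into the $(n-1)$-th, and by the Gaussian integration by parts $\partial_i^*=x_i-\partial_i$, the fields $\nabla\psi_n$ are mutually orthogonal in $L^2(\gamma_d;\R^d)$. Moreover,
\[
\sum_n\|\nabla\psi_n\|_2^2=\sum_n n\,\|\psi_n\|_2^2=\langle -L\varphi,\varphi\rangle_{L^2(\gamma_d)}=\|\nabla\varphi\|_{L^2(\gamma_d)}^2,
\]
which is consistent with \eqref{eq:wick-gradient-norm}, since $\|\psi_n\|_2^2=n!\|A_n\|^2$. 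Here $L=\Delta-x\cdot\nabla$ is the Ornstein--Uhlenbeck generator and $-L\psi_n=n\psi_n$. This identity holds because $\varphi$ is smooth and compactly supported, so $\varphi\in\mathrm{Dom}(L)$ and integration by parts against $\gamma_d$ is legitimate.

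The remaining step, which I expect to be the only real obstacle, is to justify that $\nabla p_N$ actually converges to $\nabla\varphi$, rather than merely having the right total energy. I will use commutation. For each $i$, $\partial_i\varphi\in L^2(\gamma_d)$ has a chaos expansion, and the $(n-1)$-th chaos component of $\partial_i\varphi$ equals $\partial_i\psi_n$. To see this, pair both against a Hermite polynomial $q$ of degree $n-1$ and integrate by parts:
\[
\langle\partial_i\varphi,q\rangle=\langle\varphi,(x_i-\partial_i)q\rangle,
\]
and $(x_i-\partial_i)q$ lies in the $n$-th chaos. Hence $\partial_i p_N$ is exactly the truncation at level $N-1$ of the chaos expansion of $\partial_i\varphi$. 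Such truncations converge in $L^2(\gamma_d)$ by completeness of Hermite polynomials, so $\nabla p_N\to\nabla\varphi$. A diagonal argument over an approximating sequence $\nabla\varphi_j$ then extends the density to the full closure ${\rm Tan}_{\gamma_d}$.
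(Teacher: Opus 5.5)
Your proposal is correct and follows essentially the same route as the paper: expand the compactly supported potential in Wiener chaos, truncate to a polynomial, and use Lemma~\ref{lem:tan-density-criterion} for the reverse inclusion. The only difference is how you justify $\nabla p_N\to\nabla\varphi$. The paper cites the Ornstein--Uhlenbeck identity $\|\nabla(f-S_Nf)\|_{L^2(\gamma_d)}^2=\sum_{n>N}n\|f_n\|_{L^2(\gamma_d)}^2$, whereas you identify $\partial_i p_N$ as the chaos truncation of $\partial_i\varphi$ via the creation operator $x_i-\partial_i$, which makes that step slightly more explicit.
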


\begin{proof}
Lemma~\ref{lem:tan-density-criterion} shows first that every polynomial gradient belongs to $\operatorname{Tan}_{\gamma_d}$. Conversely, it is enough by definition of $\operatorname{Tan}_{\gamma_d}$ to approximate $\nabla f$ for $f\in C_c^\infty(\R^d)$. Expand $f$ in the Hermite basis,
\[
 f=\sum_{n\ge0} f_n,
\]
where $f_n$ is the $n$-th homogeneous Wiener-chaos component. Since $f\in W^{1,2}(\gamma_d)$, the Ornstein--Uhlenbeck spectral identity gives
\[
 \|\nabla f\|_{L^2(\gamma_d)}^2
 =
 \sum_{n\ge1} n\|f_n\|_{L^2(\gamma_d)}^2<\infty.
\]
Therefore the partial sums $S_N f=\sum_{n=0}^N f_n$, which are polynomials, satisfy
\[
 \|\nabla S_N f-\nabla f\|_{L^2(\gamma_d)}^2
 =
 \sum_{n>N} n\|f_n\|_{L^2(\gamma_d)}^2
 \longrightarrow0.
\]
Thus polynomial gradients are dense in the closure of $C_c^\infty$-gradients, namely in $\operatorname{Tan}_{\gamma_d}$.
\end{proof}

Before the proof of Theorem~\ref{thm:gaussian-sk-intro}, we explain the role of the low Hermite chaoses.
The decomposition below is with respect to the homogeneous chaos of the
potential \(\psi\), so that \(u=\nabla\psi\).

\begin{center}
\renewcommand{\arraystretch}{1.2}
\begin{tabular}{lll}
\hline
\textbf{Component} & \textbf{Vector field} & \textbf{Role} \\
\hline
\(n=1\)
&
\(u=b\)
&
translations in \(\mathcal A_d\)
\\[1mm]
\(n=2\), scalar trace
&
\(u=\lambda x\)
&
dilation in \(\mathcal A_d\)
\\[1mm]
\(n=2\), trace-free
&
\(u=Ax,\ A=A^{\mathsf T},\ \operatorname{Tr}A=0\)
&
positive defect, not sharp
\\[1mm]
\(n=3\), trace component
&
\(I\odot\mathcal H_1\)
&
sharp non-rigid mode
\\[1mm]
higher components
&
remaining Hermite components
&
controlled by Appendix~\ref{app:tensor}
\\
\hline
\end{tabular}
\end{center}
\begin{proof}[Proof of Theorem~\ref{thm:gaussian-sk-intro}]
We first assume $u=\nabla\psi$ with $\psi$ a polynomial. Write its homogeneous Wiener-chaos expansion
\[
 \psi=\sum_{n\ge0}\psi_n,
 \qquad
 \psi_n\in \text{the }n\text{-th chaos}.
\]
Then
\[
 u=\sum_{n\ge1}\nabla\psi_n,
\]
and these vector fields are mutually orthogonal in $L^2(\gamma_d;\R^d)$. For fixed $\theta$, the conditional expectation $P_\theta$ preserves homogeneous chaoses, because it is the orthogonal projection onto the closed subspace generated by the one-dimensional Gaussian variable $\theta\cdot X$. Hence the quantities
\[
 (I-P_\theta)(\theta\cdot\nabla\psi_n)
\]
are orthogonal in $L^2(\gamma_d)$ for different $n$. Therefore
\begin{equation}
\label{eq:gauss-R-sum}
 \cR_{\gamma_d}(u)
 =
 \sum_{n\ge1}
 \int_{\Sd}\|(I-P_\theta)(\theta\cdot\nabla\psi_n)\|_2^2\,\dd\sigma(\theta).
\end{equation}

Let $\psi_n=\langle A_n,\Wick{X^{\otimes n}}\rangle$. Lemma~\ref{lem:gaussian-chaos-ridge} gives the contribution of the $n$-th chaos. The rigid modes are exactly the following: $n=1$ gives constant vector fields $b$, and the scalar trace part of $n=2$ gives fields $\lambda x$. These modes span $\cA_d$ and contribute zero to the ridge defect. All other chaos components are orthogonal to $\cA_d$.

For the non-rigid components, Lemma~\ref{lem:tensor-estimate} yields
\[
 \int_{\Sd}(A_n:\theta^{\otimes n})^2\,\dd\sigma(\theta)
 \leq
 \frac{3}{d(d+2)}\|A_n\|^2,
\]
with the sharper value $2/[d(d+2)]$ on the trace-free part of $n=2$. Inserting this into \eqref{eq:chaos-ridge-expanded} gives, for every non-rigid component,
\[
 \int_{\Sd}\|(I-P_\theta)(\theta\cdot\nabla\psi_n)\|_2^2\,\dd\sigma(\theta)
 \geq
 \left(\frac1d-\frac{3}{d(d+2)}\right)n\,n!\|A_n\|^2.
\]
By \eqref{eq:wick-gradient-norm}, this is
\[
 \geq
 \frac{d-1}{d(d+2)}\|\nabla\psi_n\|_{L^2(\gamma_d)}^2.
\]
Summing over the orthogonal complement of the rigid modes and using \eqref{eq:gauss-R-sum}, we obtain
\[
 \cR_{\gamma_d}(u)
 \geq
 \frac{d-1}{d(d+2)}
 \dist_{L^2(\gamma_d)}^2(u,\cA_d).
\]

For a general $u\in {\rm Tan}_{\gamma_d}$, choose polynomial gradients $u_j$ converging to $u$ in $L^2(\gamma_d;\R^d)$ by Lemma~\ref{lem:gaussian-polynomial-density}. The distance to the finite-dimensional closed space $\cA_d$ is continuous in $L^2$, and the ridge defect is continuous by Lemma~\ref{lem:ridge-continuity}. Passing to the limit gives \eqref{eq:gaussian-sk-intro}.

It remains to prove sharpness. Appendix~\ref{app:tensor} shows that in degree $3$ the trace component $I\odot \mathcal H_1\subset\mathrm{Sym}^3(\R^d)$ attains
\[
 \int_{\Sd}(A_3:\theta^{\otimes3})^2\,\dd\sigma(\theta)
 =
 \frac{3}{d(d+2)}\|A_3\|^2
\]
for every non-zero $A_3\in I\odot\mathcal H_1$. Let
\[
 \psi_3(x)=\langle A_3,\Wick{x^{\otimes3}}\rangle,
 \qquad
 u=\nabla\psi_3.
\]
Then $u$ is a polynomial gradient, hence $u\in\operatorname{Tan}_{\gamma_d}$, and it is orthogonal to $\cA_d$ because it lies in the vector-valued second Wiener chaos. Lemma~\ref{lem:gaussian-chaos-ridge} and \eqref{eq:wick-gradient-norm} give
\[
 \frac{\cR_{\gamma_d}(u)}{\dist_{L^2(\gamma_d)}^2(u,\cA_d)}
 =
 \frac{\cR_{\gamma_d}(u)}{\|u\|_{L^2(\gamma_d)}^2}
 =
 \frac1d-\frac{3}{d(d+2)}
 =
 \frac{d-1}{d(d+2)}.
\]
Thus no larger SPK constant can hold.
\end{proof}

\begin{lemma}[Spherical tensor estimate]
\label{lem:tensor-estimate}
Let $A$ be a symmetric $n$-tensor on $\R^d$. If either $n\ge3$, or $n=2$ and $A$ is trace-free, then
\[
 \int_{\Sd}(A:\theta^{\otimes n})^2\,\dd\sigma(\theta)
 \leq
 \frac{3}{d(d+2)}\|A\|^2.
\]
For $n\ge3$ this constant is optimal; equality already occurs in degree $3$ on the trace component $I\odot\mathcal H_1$. For $n=2$ and $\Tr A=0$ one has the sharper identity
\[
 \int_{\Sd}(\theta^TA\theta)^2\,\dd\sigma(\theta)
 =
 \frac{2}{d(d+2)}\|A\|_{\mathrm{HS}}^2.
\]
\end{lemma}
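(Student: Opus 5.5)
The plan is to diagonalize the quadratic form $A\mapsto\int_{\Sd}(A:\theta^{\otimes n})^2\,\dd\sigma(\theta)$ on $\mathrm{Sym}^n(\R^d)$ using the $O(d)$-decomposition of symmetric tensors into harmonic pieces. Concretely, every $A\in\mathrm{Sym}^n(\R^d)$ decomposes uniquely as
\[
 A=\sum_{0\le j\le n/2} I^{\odot j}\odot H_{n-2j},
\]
with each $H_m\in\mathrm{Sym}^m(\R^d)$ trace-free. These summands lie in pairwise inequivalent irreducible $O(d)$-modules, so they are orthogonal in the Hilbert--Schmidt norm. The form $Q(A)=\int(A:\theta^{\otimes n})^2\dd\sigma$ is $O(d)$-invariant, so by Schur's lemma it acts as a scalar $c_{n,j}(d)$ on each summand. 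The claim therefore reduces to computing these scalars and checking that
\[
 c_{n,j}(d)\le \frac{3}{d(d+2)}
\]
for all pairs $(n,j)$ except the excluded rigid ones $n=1$ and $(n,j)=(2,1)$. The trace-free case $n=2$ corresponds to $j=0$.

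The numerator of $c_{n,j}(d)$ is computed via spherical harmonics. On the sphere,
\[
 (I^{\odot j}\odot H_m):\theta^{\otimes n}=|\theta|^{2j}\,h_m(\theta)=h_m(\theta),
 \qquad h_m(\theta):=H_m:\theta^{\otimes m},
\]
and $h_m$ is a spherical harmonic of degree $m$. I would use the standard moment formula $\int_{\Sd}\theta^{\otimes 2m}\dd\sigma=\frac{(2m-1)!!}{d(d+2)\cdots(d+2m-2)}\,\mathrm{Sym}(I^{\otimes m})$. Contracting it against a trace-free $H_m$ kills all pairings except those matching the two copies of $H_m$, which gives
\[
 \int_{\Sd}h_m^2\,\dd\sigma=\frac{m!}{d(d+2)\cdots(d+2m-2)}\|H_m\|^2 .
\]
For $m=2$ this is already the sharper identity $\frac{2}{d(d+2)}\|A\|_{\mathrm{HS}}^2$ claimed for trace-free $n=2$ (consistent with Proposition~\ref{prop:gaussian-linear-exact}). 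For $m=1$ it gives $|v|^2/d$.

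The denominator requires $\|I^{\odot j}\odot H_m\|^2$ in terms of $\|H_m\|^2$. I would obtain it by writing the symmetrization explicitly as an average over index placements and computing the norm by counting which contraction patterns survive; traces of $H_m$ vanish, so only patterns where $H$-indices meet $H$-indices and $I$-blocks close into loops (each contributing a factor $d$) or chains survive. The base case is $n=3$, $j=1$, $A=I\odot v$, where one checks directly that $\|I\odot v\|^2=\frac{d+2}{3}|v|^2$. Combined with $\int(v\cdot\theta)^2\dd\sigma=|v|^2/d$, this gives ratio exactly $\frac{3}{d(d+2)}$ and hence the equality case on $I\odot\mathcal H_1$. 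A cleaner route that I would try first is a recursion in $j$: the trace map $\mathrm{tr}:\mathrm{Sym}^{n}\to\mathrm{Sym}^{n-2}$ is, up to a scalar, the adjoint of $B\mapsto I\odot B$. Computing $\mathrm{tr}(I\odot B)$ for $B=I^{\odot(j-1)}\odot H_m$ then yields $\|I\odot B\|^2=\alpha_{n,j}(d)\|B\|^2$ with an explicit positive rational $\alpha_{n,j}(d)$.

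The final step is the comparison of $c_{n,j}(d)=\dfrac{m!\,/\,[d(d+2)\cdots(d+2m-2)]}{\prod_{i\le j}\alpha_{n-2j+2i,i}(d)}$ with $\frac{3}{d(d+2)}$. For $j=0$ and $n\ge3$ the ratio is $\frac{n!}{d\cdots(d+2n-2)}$, which is decreasing in $n$ and already below $\frac{3}{d(d+2)}$ at $n=3$. For $j\ge1$ each additional factor of $I$ should inflate the norm while leaving the spherical integral unchanged, so $c_{n,j}$ should decrease in $j$ for fixed $m$. The main obstacle is this closed-form norm computation (the $\alpha$'s) and the uniform monotonicity check over all $(m,j)$, including low-$d$ cases such as $d=2$, where the harmonic spaces are small and the inequalities are closest. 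If the recursion becomes messy, the fallback is Funk--Hecke: identify $c_{n,j}(d)$ with the Funk--Hecke eigenvalue of the kernel $(\theta\cdot\eta)^n$ on degree-$m$ harmonics, normalized by the corresponding Hilbert--Schmidt factor, and compare via explicit Gegenbauer coefficients.
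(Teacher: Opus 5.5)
Your framework matches the paper's: the trace decomposition, Schur's lemma, the harmonic numerator $\frac{m!}{d(d+2)\cdots(d+2m-2)}\|H_m\|^2$, and the equality case $\|I\odot v\|^2=\frac{d+2}{3}|v|^2$. The gap is in the step meant to control the mixed components $j\ge1$. An extra factor of $I$ does not in general inflate the norm, so $c_{n,j}$ is not decreasing in $j$ for fixed $m$. Use the Fischer inner product $\|A\|^2=\frac1{n!}\,p_A(\partial)p_A$ and $\Delta(|x|^2h)=2(2m+d)h$ for harmonic $h$ of degree $m$ (your trace/adjoint recursion gives the same). One finds
\[
 \|I\odot H_m\|^2=\frac{2(2m+d)}{(m+1)(m+2)}\,\|H_m\|^2 .
\]
This is strictly smaller than $\|H_m\|^2$ whenever $m^2-m+2>2d$, which happens for every $d$ once $m$ is large. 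Concretely, take $d=2$ and $h=x_1^3-3x_1x_2^2$. Then $\|H_3\|^2=4$ and $\|I\odot H_3\|^2=\frac{16}{5}$, so $c_{5,1}=\frac{5}{32}>c_{3,0}=\frac18$. Hence your reduction of the components $I^{\odot j}\odot\mathcal H_m$ with $m\ge3$, $j\ge1$ to the $j=0$ chain fails. Separately, for $m\in\{0,1\}$ that chain starts at $c_{0,0}=1$ and $c_{1,0}=\frac1d$, both above $\frac{3}{d(d+2)}$, so those branches could not be anchored at $j=0$ anyway.

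What is true is the reverse ordering at fixed degree. Iterating the Fischer computation gives $\|I^{\odot j}\odot H_m\|^2=\frac{4^j j!\,(m+d/2)_j\,m!}{n!}\|H_m\|^2$. Hence
\[
 c_{n,j}=\frac{n!\,\Gamma(d/2)}{2^n\,j!\,\Gamma(n-j+d/2)},
 \qquad
 \frac{c_{n,j+1}}{c_{n,j}}=\frac{n-j-1+d/2}{j+1}\ge1
 \quad (0\le j<\lfloor n/2\rfloor).
\]
So in each degree the largest eigenvalue sits on the \emph{most} traced component, $m=0$ or $m=1$. It then suffices to check monotonicity along these two branches:
\[
 \frac{c_{2q+2,q+1}}{c_{2q,q}}=\frac{2q+1}{2q+d}\le1,
 \qquad
 \frac{c_{2q+3,q+1}}{c_{2q+1,q}}=\frac{2q+3}{2q+2+d}\le1,
\]
starting from $c_{4,2}=c_{3,1}=\frac{3}{d(d+2)}$. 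This fixed-degree comparison is the argument in the paper's appendix. With it, your separate $j=0$ chain and the Funk--Hecke fallback are unnecessary.
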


\begin{proof}
The trace-free $n=2$ identity is the usual fourth-moment formula on the sphere. The general estimate is a finite-dimensional Funk--Hecke, or equivalently representation-theoretic, calculation for symmetric tensors; see Appendix~\ref{app:tensor} and the references therein for the normalization. The appendix shows that the eigenvalues of the operator
\[
 A\mapsto \int_{\Sd}(A:\theta^{\otimes n})\theta^{\otimes n}\,\dd\sigma(\theta)
\]
on the irreducible trace components of $\mathrm{Sym}^n(\R^d)$ are bounded by $3/[d(d+2)]$ except for the scalar trace component in degree $n=2$, which corresponds exactly to the rigid field $x\mapsto \lambda x$. The same appendix also shows that the value $3/[d(d+2)]$ is attained on $I\odot\mathcal H_1$ in degree $3$. This proves the lemma.
\end{proof}

\begin{remark}[Sharpness and affine modes]
Proposition~\ref{prop:gaussian-linear-exact} shows that trace-free linear gradient fields have ratio $1/(d+2)$. This is larger than the sharp global value $(d-1)/[d(d+2)]$ when $d>1$. Thus the extremizers for the full Gaussian SPK constant are not affine trace-free maps. They occur already in the third Hermite chaos, on the trace component $I\odot\mathcal H_1$ described in Appendix~\ref{app:tensor}. The affine calculation is included because it identifies the exact behavior on linear Brenier maps, but it is not the worst Gaussian spectral mode.
\end{remark}

\medskip

\begin{corollary}[Gaussian stability]
\label{cor:gaussian-stability}
Let $T=\nabla\varphi\in {\rm Tan}_{\gamma_d}$  be the optimal transport map from $\gamma_d$ to $\nu$. Let $\tau_\theta$ be the monotone transport map from $(\gamma_d)_\theta$ to $\nu_\theta$, and suppose
\[
 \Lambda:=\esssup_\theta\Lip(\tau_\theta)<\infty.
\]
Then
\[
 \dist_{L^2(\gamma_d)}^2(T,\cA_d)
 \leq
 \frac{d(d+2)}{d-1}\,\Lambda\,\cD(\gamma_d,\nu).
\]
\end{corollary}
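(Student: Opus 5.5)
This corollary is the tangent-space case \eqref{eq:spk-main-stab-tangent} of Theorem~\ref{thm:spk-main} with $\mu=\gamma_d$, combined with the value $\kappa_{\mathrm{SPK}}(\gamma_d)=\frac{d-1}{d(d+2)}$ from Theorem~\ref{thm:gaussian-sk-intro}. The plan is to check the hypotheses of Theorem~\ref{thm:spk-main} and then substitute the constant.

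The hypotheses are all immediate. First, $\gamma_d\ll\cL^d$, and $T=\nabla\varphi$ is the Brenier map. Second, for $d\ge2$ Theorem~\ref{thm:gaussian-sk-intro} gives $\kappa_{\mathrm{SPK}}(\gamma_d)=\frac{d-1}{d(d+2)}>0$. Third, $\Lambda<\infty$ is assumed. Theorem~\ref{thm:spk-main} therefore applies. Since $T\in{\rm Tan}_{\gamma_d}$ by assumption, the tangent defect $\dist_{L^2(\gamma_d)}(T,{\rm Tan}_{\gamma_d})$ vanishes, and \eqref{eq:spk-main-stab-tangent} reads
\[
 \dist_{L^2(\gamma_d)}^2(T,\cA_d)\le \frac{\Lambda}{\kappa_{\mathrm{SPK}}(\gamma_d)}\,\cD(\gamma_d,\nu)=\frac{d(d+2)}{d-1}\,\Lambda\,\cD(\gamma_d,\nu).
\]

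One can also argue directly, without citing Theorem~\ref{thm:spk-main}. Apply Lemma~\ref{lem:fenchel-gap} in each direction $\theta$ with $A=\theta\cdot X$ and $B=\theta\cdot T(X)$. Integrating over $\theta$ gives $\cR_{\gamma_d}(T)\le\Lambda\,\cD(\gamma_d,\nu)$, as in \eqref{eq:ridge-by-deficit}. Since $T\in{\rm Tan}_{\gamma_d}$, inequality \eqref{eq:gaussian-sk-intro} applies to $u=T$, which yields the claim.

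The only point needing care is measurability and integrability in $\theta$ when integrating the one-dimensional Fenchel-gap bound. This is already handled in the proof of Theorem~\ref{thm:spk-main}: $g_\theta\ge0$ and $\int g_\theta\,\dd\sigma=\cD$ by \eqref{eq:deficit-average}, and $\Lip(\tau_\theta)\le\Lambda$ holds for $\sigma$-a.e. $\theta$, which is all that is needed.
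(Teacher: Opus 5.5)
Your proposal is correct and is essentially the paper's proof: the paper argues exactly as in your direct route, using the per-direction Fenchel-gap bound \eqref{eq:fenchel-direction} with $h=\tau_\theta$ to get $\cR_{\gamma_d}(T)\le\Lambda\,\cD(\gamma_d,\nu)$, and then applying Theorem~\ref{thm:gaussian-sk-intro} to $u=T\in{\rm Tan}_{\gamma_d}$. Your first route, via the tangent case of Theorem~\ref{thm:spk-main} with $\kappa_{\mathrm{SPK}}(\gamma_d)=\frac{d-1}{d(d+2)}$, is the same argument in packaged form.
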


\begin{proof}
For each $\theta$, \eqref{eq:fenchel-direction} gives
\[
 \int |\theta\cdot T(x)-\tau_\theta(\theta\cdot x)|^2\,\dd\gamma_d(x)
 \leq
 \Lambda g_\theta(T).
\]
Since $h=\tau_\theta$ is admissible in the definition of $\cR_{\gamma_d}(T)$,
\[
 \cR_{\gamma_d}(T)\leq \Lambda\int g_\theta(T)\,\dd\sigma(\theta)
 =\Lambda\cD(\gamma_d,\nu).
\]
Theorem~\ref{thm:gaussian-sk-intro} completes the proof.
\end{proof}
\subsection{Isotropic Gaussian measures}

\begin{proposition}[Isotropic Gaussian measures]
\label{prop:spherical-gaussian-spk}
Let
\[
 \gamma_{a,\sigma}:=N(a,\sigma^2 I_d),
 \qquad a\in\R^d,\quad \sigma>0.
\]
Then
\[
 \kappa_{\mathrm{SPK}}(\gamma_{a,\sigma})
 =
 \frac{d-1}{d(d+2)}.
\]
Equivalently, the normalized sliced Poincar\'e--Korn constant satisfies
\[
 \overline{\kappa}_{\mathrm{SPK}}(\gamma_{a,\sigma})
 =
 d\,\kappa_{\mathrm{SPK}}(\gamma_{a,\sigma})
 =
 \frac{d-1}{d+2}.
\]
Consequently, the same quantitative stability estimate holds as in the standard Gaussian case.
\end{proposition}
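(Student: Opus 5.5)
The plan is to reduce to the standard Gaussian by an affine change of variables $S(x)=a+\sigma x$, which pushes $\gamma_d$ to $\gamma_{a,\sigma}$. I will show that each of the three ingredients of $\kappa_{\mathrm{SPK}}$ (the tangent space, the ridge defect, and the distance to $\cA_d$) transforms by the same factor $\sigma^2$ (or not at all) under the pullback. The ratio is then unchanged, and Theorem~\ref{thm:gaussian-sk-intro} gives the value.

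First, the map $u\mapsto \tilde u:=u\circ S$ is an isometry from $L^2(\gamma_{a,\sigma};\R^d)$ onto $L^2(\gamma_d;\R^d)$. It maps ${\rm Tan}_{\gamma_{a,\sigma}}$ onto ${\rm Tan}_{\gamma_d}$. Indeed, if $\varphi\in C_c^\infty$, then $\nabla\varphi\circ S=\sigma^{-1}\nabla(\varphi\circ S)$ with $\varphi\circ S\in C_c^\infty$, and conversely. So the dense generating sets correspond, and hence so do their closures. The map also preserves $\cA_d$: $\lambda S(x)+b=\lambda\sigma x+(\lambda a+b)\in\cA_d$, and the correspondence is bijective since $\sigma>0$. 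Consequently
\[
\dist_{L^2(\gamma_{a,\sigma})}(u,\cA_d)=\dist_{L^2(\gamma_d)}(\tilde u,\cA_d).
\]

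Second, the ridge defect. For fixed $\theta$ we have $\theta\cdot S(x)=\theta\cdot a+\sigma\,\theta\cdot x$, which is an injective affine function of $\theta\cdot x$. So the $\sigma$-algebras generated by $\theta\cdot S(X)$ and by $\theta\cdot X$ coincide, and ridge functions $h(\theta\cdot y)$ under $\gamma_{a,\sigma}$ correspond exactly to ridge functions $\tilde h(\theta\cdot x)$ under $\gamma_d$. Hence, using the infimum formulation \eqref{eq:ridge-inf-intro} for each $\theta$,
\[
\inf_h\int|\theta\cdot u(y)-h(\theta\cdot y)|^2\dd\gamma_{a,\sigma}(y)=\inf_{\tilde h}\int|\theta\cdot\tilde u(x)-\tilde h(\theta\cdot x)|^2\dd\gamma_d(x).
\]
Integrating over $\theta$ gives $\cR_{\gamma_{a,\sigma}}(u)=\cR_{\gamma_d}(\tilde u)$. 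The factor $\sigma^2$ never even appears, because we keep the field $u$ itself and do not rescale it.

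Combining the two steps, the Rayleigh quotients defining $\kappa_{\mathrm{SPK}}(\gamma_{a,\sigma})$ and $\kappa_{\mathrm{SPK}}(\gamma_d)$ range over the same set of values, with the zero-denominator terms matching. The two infima are therefore equal, and Theorem~\ref{thm:gaussian-sk-intro} yields $(d-1)/(d(d+2))$. The normalized identity follows by multiplying by $d$. The stability consequence then follows by inserting this constant into Theorem~\ref{thm:spk-main}, exactly as in Corollary~\ref{cor:gaussian-stability}.

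The only step requiring care is the tangent-space correspondence. The naive pullback of a gradient is $\sigma^{-1}$ times a gradient, so I must note explicitly that ${\rm Tan}$ is a linear space closed under scalar multiples, and that closures correspond under the isometry. Otherwise the argument is a direct bookkeeping check.
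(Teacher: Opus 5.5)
Your proposal is correct and follows essentially the same route as the paper. The isometry $u\mapsto u(a+\sigma\,\cdot)$ identifies the tangent spaces, preserves $\cA_d$ via $\lambda(a+\sigma z)+b=(\lambda\sigma)z+(\lambda a+b)$, and leaves the ridge defect unchanged because conditioning on $\theta\cdot X$ is the same as conditioning on $\theta\cdot Z$, so Theorem~\ref{thm:gaussian-sk-intro} gives the value. The only minor difference is in the stability consequence. You insert the constant directly into Theorem~\ref{thm:spk-main}, while the paper rescales $T$ to $S(z)=(T(a+\sigma z)-a)/\sigma$ and checks that $\dist^2$ and $\cD$ both scale by $\sigma^2$ with $\Lambda$ unchanged; both routes are valid.
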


\begin{proof}
The case \(a=0\) and \(\sigma=1\) is exactly the Gaussian sliced Poincar\'e--Korn
inequality proved in Theorem~\ref{thm:gaussian-sk-intro}. We reduce the general
spherical case to the standard one.

Let \(U:L^2(\gamma_{a,\sigma};\R^d)\to L^2(\gamma_d;\R^d)\) be the isometry
\[
 (Uu)(z):=u(a+\sigma z).
\]
This map identifies the two tangent spaces. Indeed, if \(u=\nabla_x\psi\) with \(\psi\in C_c^\infty(\R^d)\), then
\[
 (Uu)(z)
 =
 \nabla_z\left(\frac1\sigma\psi(a+\sigma z)\right),
\]
and the converse follows by applying the inverse affine change of variables. Thus the identities below, first checked on smooth compactly supported gradients, extend by closure to all tangent fields because both \(\cR\) and the distance to the finite-dimensional space \(\cA_d\) are continuous in \(L^2\).

Let \(X\sim\gamma_{a,\sigma}\). Write
\[
 X=a+\sigma Z,
 \qquad Z\sim\gamma_d.
\]
For \(u\in {\rm Tan}_{\gamma_{a,\sigma}}\), define
\[
 v(z):=u(a+\sigma z).
\]
Then \(v\in {\rm Tan}_{\gamma_d}\).

For every \(\theta\in\Sd\),
\[
 \theta\cdot X
 =
 \theta\cdot a+\sigma\,\theta\cdot Z.
\]
Thus conditioning on \(\theta\cdot X\) is equivalent to conditioning on
\(\theta\cdot Z\). Hence
\[
 \cR_{\gamma_{a,\sigma}}(u)
 =
 \cR_{\gamma_d}(v).
\]

Moreover,
\[
 \dist_{L^2(\gamma_{a,\sigma})}^2(u,\cA_d)
 =
 \dist_{L^2(\gamma_d)}^2(v,\cA_d).
\]
Indeed,
\[
 \lambda(a+\sigma z)+b
 =
 (\lambda\sigma)z+(\lambda a+b),
\]
and as \(\lambda\in\R\) and \(b\in\R^d\) vary, the pair
\[
 \alpha=\lambda\sigma,\qquad \beta=\lambda a+b
\]
runs over all \(\alpha\in\R\) and \(\beta\in\R^d\).

Applying Theorem~\ref{thm:gaussian-sk-intro} to \(v\) under \(\gamma_d\), we obtain
\[
 \cR_{\gamma_d}(v)
 \geq
 \frac{d-1}{d(d+2)}
 \dist_{L^2(\gamma_d)}^2(v,\cA_d).
\]
Using the two identities above gives
\[
 \cR_{\gamma_{a,\sigma}}(u)
 \geq
 \frac{d-1}{d(d+2)}
 \dist_{L^2(\gamma_{a,\sigma})}^2(u,\cA_d).
\]
Therefore translations and scalar dilations do not
change the sliced Poincar\'e--Korn constant. Since Theorem~\ref{thm:gaussian-sk-intro} gives the sharp value for \(\gamma_d\),  we have
\[
 \kappa_{\mathrm{SPK}}(\gamma_{a,\sigma})
 =
 \frac{d-1}{d(d+2)}.
\]

For the Wasserstein stability estimate, if
\[
S(z)=\frac{T(a+\sigma z)-a}{\sigma},
\]
then
\[
\dist_{L^2(\gamma_{a,\sigma})}^2(T,\cA_d)
=
\sigma^2
\dist_{L^2(\gamma_d)}^2(S,\cA_d),
\]
and
\[
\cD\bigl(\gamma_{a,\sigma},T_\#\gamma_{a,\sigma}\bigr)
=
\sigma^2\cD\bigl(\gamma_d,S_\#\gamma_d\bigr).
\]
The projected Lipschitz scale \(\Lambda\) is also unchanged under this affine
normalization.  Thus the stability constant is unchanged.
\end{proof}

\section{Examples and obstructions}\label{example}
\subsection{Perturbations of the Gaussian}

\begin{proposition}[Bounded Gaussian perturbations]
\label{prop:bounded-perturbation}
Assume
\[
 \dd\mu=\rho\dd\gamma_d,
 \qquad
 0<m\leq \rho\leq M<\infty.
\]
Then
\[
 \kappa_{\mathrm{SPK}}(\mu)
 \geq
 \frac{m}{M}\frac{d-1}{d(d+2)}.
\]
Consequently, for Brenier maps $T:\mu\to\nu$ with $T\in{\rm Tan}_\mu$ and satisfying $\esssup_\theta\Lip(\tau_\theta)\le\Lambda$,
\[
 \dist_{L^2(\mu)}^2(T,\cA_d)
 \leq
 \frac{M}{m}\frac{d(d+2)}{d-1}\,\Lambda\,\cD(\mu,\nu).
\]
\end{proposition}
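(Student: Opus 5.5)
The argument is a direct comparison of both sides of the SPK ratio between $\mu$ and $\gamma_d$. Both the ridge defect and the distance to $\cA_d$ are variational quantities, so bounded densities let us move between the two measures.

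\textbf{Distance to $\cA_d$.} For any $u\in L^2(\mu;\R^d)$ and any $a\in\cA_d$ we have
\[
 \|u-a\|_{L^2(\mu)}^2\le M\|u-a\|_{L^2(\gamma_d)}^2 .
\]
Taking the infimum over $a$ gives
\[
 \dist_{L^2(\mu)}^2(u,\cA_d)\le M\dist_{L^2(\gamma_d)}^2(u,\cA_d).
\]

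\textbf{Ridge defect.} We use the infimum form \eqref{eq:ridge-inf-intro}. Fix $\theta$ and let $h$ be optimal (or near-optimal) for $\mu$. Since $\rho\ge m$, the function $h(\theta\cdot x)$ lies in $L^2(\gamma_d)$ whenever it lies in $L^2(\mu)$, and
\[
 \inf_{h}\int|\theta\cdot u-h(\theta\cdot x)|^2\dd\gamma_d
 \le \frac1m\int|\theta\cdot u-h(\theta\cdot x)|^2\dd\mu .
\]
Integrating in $\theta$ yields $\cR_{\gamma_d}(u)\le m^{-1}\cR_\mu(u)$.

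\textbf{Tangent spaces.} Because $m\le\rho\le M$, the norms of $L^2(\mu;\R^d)$ and $L^2(\gamma_d;\R^d)$ are equivalent. Hence the two spaces coincide as sets, and the closures of $\{\nabla\varphi:\varphi\in C_c^\infty\}$ agree, so ${\rm Tan}_\mu={\rm Tan}_{\gamma_d}$.

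\textbf{Conclusion.} Take $u\in{\rm Tan}_\mu$. Theorem~\ref{thm:gaussian-sk-intro} applies to $u$ as an element of ${\rm Tan}_{\gamma_d}$, and chaining the three facts gives
\[
 \dist_{L^2(\mu)}^2(u,\cA_d)\le M\dist_{L^2(\gamma_d)}^2(u,\cA_d)\le M\frac{d(d+2)}{d-1}\cR_{\gamma_d}(u)\le \frac Mm\frac{d(d+2)}{d-1}\cR_\mu(u).
\]
This gives the claimed lower bound on $\kappa_{\mathrm{SPK}}(\mu)$. The stability estimate then follows from \eqref{eq:spk-main-stab-tangent} of Theorem~\ref{thm:spk-main}.

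The only point needing care is that the Gaussian result is applied to a field that a priori lies only in ${\rm Tan}_\mu$. The norm equivalence in the tangent-space step settles this. The rest is routine.
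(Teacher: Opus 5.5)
Your proposal is correct and follows the paper's proof essentially step for step: you identify ${\rm Tan}_\mu={\rm Tan}_{\gamma_d}$ by norm equivalence, compare the ridge defects with factor $m$ and the distances to $\cA_d$ with factor $M$, and then invoke the sharp Gaussian SPK inequality. Your explicit appeal to \eqref{eq:spk-main-stab-tangent} for the stability consequence fills in the step that the paper leaves implicit.
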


\begin{proof}
Since $m\le\rho\le M$, the $L^2(\mu)$ and $L^2(\gamma_d)$ norms are equivalent. Hence the closures of $\{\nabla\phi:\phi\in C_c^\infty(\R^d)\}$ in these two norms coincide, so ${\rm Tan}_\mu={\rm Tan}_{\gamma_d}$ as sets. For each \(\theta\), if \(Z=\theta\cdot X\) with \(X\sim\gamma_d\), then
 \(m\le \dd\mu_\theta/\dd\gamma_1\le M\). Thus
\(L^2(\mu_\theta)\) and \(L^2(\gamma_1)\) coincide as sets with equivalent norms.

For every $u\in{\rm Tan}_\mu={\rm Tan}_{\gamma_d}$, every direction, and every ridge function $h(\theta\cdot x)$,
\[
 \int|\theta\cdot u-h(\theta\cdot x)|^2\,\dd\mu
 \geq
 m\int|\theta\cdot u-h(\theta\cdot x)|^2\,\dd\gamma_d.
\]
Taking the infimum over $h$ and integrating in $\theta$ gives
\[
 \cR_\mu(u)\geq m\cR_{\gamma_d}(u).
\]
On the other hand,
\[
 \dist_{L^2(\mu)}^2(u,\cA_d)
 \leq
 M\dist_{L^2(\gamma_d)}^2(u,\cA_d).
\]
The Gaussian sliced Poincar\'e--Korn inequality gives the claim.
\end{proof}

\subsection{Fixed measures and compact classes}

The next criterion gives a fixed-measure SPK bound on compact classes. The
constant is not dimension-free and is generally non-explicit.

\begin{proposition}[Compactness criterion]
\label{prop:compactness-spk}
Let $\mu=\rho\cL^d$ on a connected open set $\Omega$ with $\mu(\Omega)=1$, and assume that $\rho$ is locally bounded below on $\Omega$. Let $\mathcal C\subset {\rm Tan}_\mu$ be a class of gradient fields. For each $u\in\mathcal C$, let $a_u\in\cA_d$ be a nearest point to $u$ in $\cA_d$. Assume that the normalized residual set
\[
 \mathcal K
 :=
 \left\{
 \frac{u-a_u}{\dist_{L^2(\mu)}(u,\cA_d)}:
 u\in\mathcal C,\quad u\notin\cA_d
 \right\}
\]
is relatively compact in $L^2(\mu;\R^d)$. Then there exists $\kappa(\mu,\mathcal C)>0$ such that
\[
 \dist_{L^2(\mu)}^2(u,\cA_d)
 \leq
 \frac1{\kappa(\mu,\mathcal C)}\cR_\mu(u)
 \qquad
 \forall u\in\mathcal C.
\]
\end{proposition}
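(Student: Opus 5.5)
We argue by contradiction, in the standard compactness style. If no such $\kappa(\mu,\mathcal C)>0$ exists, there is a sequence $u_j\in\mathcal C\setminus\cA_d$ with
\[
 \cR_\mu(u_j)\le \tfrac1j\,\dist_{L^2(\mu)}^2(u_j,\cA_d).
\]
Elements of $\cA_d$ have vanishing ridge defect, since $\theta\cdot(\lambda x+b)$ is a function of $\theta\cdot x$. Hence, writing each $\theta\cdot u$ as its projection onto $\mathcal H^\mu_\theta$ plus a residual as in Lemma~\ref{lem:ridge-continuity}, we get $\cR_\mu(u_j)=\cR_\mu(u_j-a_{u_j})$. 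The residual map $u\mapsto(I-\Pi_\theta^\mu)(\theta\cdot u)$ is linear and annihilates $\cA_d$, so $\cR_\mu$ is a quadratic form invariant under adding elements of $\cA_d$ and $2$-homogeneous. Setting $w_j:=(u_j-a_{u_j})/\dist_{L^2(\mu)}(u_j,\cA_d)\in\mathcal K$, we therefore obtain $\|w_j\|_{L^2(\mu)}=1$ and $\cR_\mu(w_j)\le 1/j$.

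By the relative compactness of $\mathcal K$, a subsequence $w_j\to w$ strongly in $L^2(\mu;\R^d)$, and $\|w\|_{L^2(\mu)}=1$. The continuity estimate \eqref{eq:ridge-continuity-bound} gives $\cR_\mu(w)=0$. We also track two closed conditions.
\begin{itemize}
\item Since $a_{u_j}$ is a nearest point, $w_j\perp\cA_d$ in $L^2(\mu)$. Because $\cA_d$ is finite dimensional, the orthogonal complement is closed and $w\perp\cA_d$.
\item Each $w_j$ lies in ${\rm Tan}_\mu$, because $u_j\in{\rm Tan}_\mu$ and $a_{u_j}=\nabla(\tfrac\lambda2|x|^2+b\cdot x)$ is a polynomial gradient. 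The latter belongs to ${\rm Tan}_\mu$ by a cutoff argument using $\mu\in\cP_2$, as in Lemma~\ref{lem:tan-density-criterion}. Since ${\rm Tan}_\mu$ is closed, $w\in{\rm Tan}_\mu$.
\end{itemize}

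The key step, and the main obstacle, is to invoke Lemma~\ref{lem:ridge-kernel-gradient}. For this we must show that $w$ has symmetric distributional derivative on $\Omega$. Take $\psi_k\in C_c^\infty$ with $\nabla\psi_k\to w$ in $L^2(\mu)$. Because $\rho$ is locally bounded below, on each compact $K\subset\Omega$ we have $\|f\|_{L^2(K)}\le (\inf_K\rho)^{-1/2}\|f\|_{L^2(\mu)}$. Hence $\nabla\psi_k\to w$ in $L^2_{\rm loc}(\Omega)$, and thus in $\mathcal D'(\Omega)$. Each $D\nabla\psi_k$ is symmetric, and distributional derivatives commute with distributional limits, so $\partial_iw_j=\partial_jw_i$ in $\mathcal D'(\Omega)$. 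Lemma~\ref{lem:ridge-kernel-gradient} then yields $w=\lambda x+b$ $\mu$-a.e., that is, $w\in\cA_d$.

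Combined with $w\perp\cA_d$, this forces $w=0$, contradicting $\|w\|_{L^2(\mu)}=1$. Hence the infimum of $\cR_\mu(u)/\dist^2_{L^2(\mu)}(u,\cA_d)$ over $u\in\mathcal C\setminus\cA_d$ is positive, and it serves as $\kappa(\mu,\mathcal C)$. The inequality is trivial for $u\in\cA_d$. The constant is non-explicit, as the preceding paragraph warned.

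One small point deserves care. The orthogonality $w_j\perp\cA_d$ uses that $a_u$ is an \emph{orthogonal} nearest point, which holds because $\cA_d$ is a linear subspace of $L^2(\mu)$. Lemma~\ref{lem:ridge-kernel-gradient} also uses that $\Omega$ is connected, which we have by hypothesis.
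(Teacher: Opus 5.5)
Your proof is correct and is essentially the paper's argument. Both proceed by contradiction, normalize by the nearest point in $\cA_d$, extract an $L^2(\mu)$-convergent subsequence from $\mathcal K$, and use continuity of $\cR_\mu$. Both then use $L^2_{\rm loc}(\Omega)$ convergence, which follows from the local lower bound on $\rho$, to get a symmetric distributional derivative, so that Lemma~\ref{lem:ridge-kernel-gradient} forces the limit into $\cA_d$.

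The differences are cosmetic. You close the contradiction via $w\perp\cA_d$ and $\|w\|_{L^2(\mu)}=1$ rather than $\dist_{L^2(\mu)}(w,\cA_d)=1$. You also pass symmetry to the limit by first placing $w$ in ${\rm Tan}_\mu$, which costs you the extra (correct) check $\cA_d\subset{\rm Tan}_\mu$. The paper instead notes that each normalized residual already has symmetric derivative.
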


\begin{proof}
For every $a\in\cA_d$, the scalar function $\theta\cdot a(x)$ has the form $\lambda\theta\cdot x+\theta\cdot b$ and is therefore a ridge function of $\theta\cdot x$. Hence
\[
 \cR_\mu(u+a)=\cR_\mu(u),
 \qquad
 \cR_\mu(cu)=c^2\cR_\mu(u).
\]
Suppose the conclusion fails. Then there are $u_n\in\mathcal C\setminus\cA_d$ such that
\[
 \frac{\cR_\mu(u_n)}{\dist_{L^2(\mu)}^2(u_n,\cA_d)}\longrightarrow0.
\]
Set $\delta_n:=\dist_{L^2(\mu)}(u_n,\cA_d)$ and
\[
 v_n:=\frac{u_n-a_{u_n}}{\delta_n}.
\]
Then $v_n\in\mathcal K$ and
\[
 \dist_{L^2(\mu)}(v_n,\cA_d)=1,
 \qquad
 \cR_\mu(v_n)=\frac{\cR_\mu(u_n)}{\delta_n^2}\longrightarrow0.
\]
By the relative compactness of $\mathcal K$, a subsequence converges in $L^2(\mu)$ to some $v$.

By Lemma~\ref{lem:ridge-continuity}, the ridge defect is continuous on $L^2(\mu;\R^d)$, and the distance to the finite-dimensional closed space $\cA_d$ is continuous. Hence
\[
 \cR_\mu(v)=0,
 \qquad
 \dist_{L^2(\mu)}(v,\cA_d)=1.
\]
Since \(\rho\) is locally bounded below on \(\Omega\), convergence in
\(L^2(\mu)\) implies convergence in \(L^2_{\rm loc}(\Omega)\). Hence the
\(L^2(\mu)\)-closure of smooth gradients consists, locally in \(\Omega\),
of vector fields with symmetric distributional derivative.
By approximation with smooth gradients, each \(v_n\), and hence the limit
\(v\), has symmetric distributional derivative. Then Lemma~\ref{lem:ridge-kernel-gradient} gives $v\in\cA_d$, a contradiction.
\end{proof}

\begin{remark}[Compact classes]
The proposition is a fixed-measure compactness device, not a uniform geometric
estimate. It applies to classes whose normalized residual directions are
precompact in \(L^2(\mu)\), for instance finite-dimensional ansatz classes or
classes with uniform compactness estimates after \(L^2\)-normalization. The
resulting constant \(\kappa(\mu,\mathcal C)\) is generally non-explicit and
depends on the chosen class.
\end{remark}

\subsection{An anisotropic Gaussian counterexample}
\label{sec:counterexamples}

We now prove Proposition~\ref{prop:aniso-obstruction-intro}. Let
\[
 \mu_\varepsilon=N\left(0,
 \begin{pmatrix}
 \varepsilon^2&0\\
 0&1
 \end{pmatrix}
 \right),
 \qquad
 0<\varepsilon<1.
\]
Then $\mu_\varepsilon=e^{-V_\varepsilon}\dd x/Z_\varepsilon$ with
\[
 V_\varepsilon(x_1,x_2)=\frac{x_1^2}{2\varepsilon^2}+\frac{x_2^2}{2},
 \qquad
 \nabla^2V_\varepsilon=\begin{pmatrix}
 \varepsilon^{-2}&0\\0&1
 \end{pmatrix}
 \geq I.
\]
Thus the Bakry--\'Emery curvature lower bound is uniform. The Poincar\'e constant is also uniformly bounded by $1$.

Consider the gradient field
$
 u(x_1,x_2)=(x_2,x_1)=\nabla(x_1x_2).
$ By the same Gaussian cut-off argument as in Lemma~\ref{lem:tan-density-criterion},
the polynomial gradient \(u=\nabla(x_1x_2)\) belongs to
\({\rm Tan}_{\mu_\varepsilon}\).

Since $\mathbb E u=0$,  and for any $X=(X_1, X_2)\sim \mu_\varepsilon$ it holds
\[
 \mathbb E[X\cdot u(X)]=2\mathbb E[X_1X_2]=0,
\]
this field is orthogonal to $\cA_2$ in $L^2(\mu_\varepsilon)$. Therefore
\begin{equation}
\label{eq:aniso-left}
 \dist_{L^2(\mu_\varepsilon)}^2(u,\cA_2)
 =
 \int |u|^2\,\dd\mu_\varepsilon
 =
 1+\varepsilon^2.
\end{equation}
Let $\theta=(\cos t,\sin t)$. Then
\[
 \theta\cdot X=\cos t\,X_1+\sin t\,X_2,
 \qquad
 \theta\cdot u(X)=\cos t\,X_2+\sin t\,X_1.
\]
For jointly Gaussian variables,
\[
 \Var(Y\mid Z)=\Var(Y)-\frac{\Cov(Y,Z)^2}{\Var(Z)}.
\]
A direct calculation gives
\begin{equation}
\label{eq:aniso-condvar}
 \Var\bigl(\theta\cdot u(X)\mid\theta\cdot X\bigr)
 =
 \frac{\varepsilon^2\cos^2(2t)}{\varepsilon^2\cos^2t+\sin^2t}.
\end{equation}
Hence
\begin{align}
 \cR_{\mu_\varepsilon}(u)
 &=
 \frac1{2\pi}\int_0^{2\pi}
 \frac{\varepsilon^2\cos^2(2t)}{\varepsilon^2\cos^2t+\sin^2t}\,\dd t \\
 &\leq
 \frac1{2\pi}\int_0^{2\pi}
 \frac{\varepsilon^2}{\varepsilon^2\cos^2t+\sin^2t}\,\dd t
 =
 \varepsilon.
\end{align}
Together with \eqref{eq:aniso-left}, this gives
\[
 \frac{\cR_{\mu_\varepsilon}(u)}{\dist_{L^2(\mu_\varepsilon)}^2(u,\cA_2)}
 \leq
 \frac{\varepsilon}{1+\varepsilon^2}\to0.
\]
Thus $\kappa_{\mathrm{SPK}}(\mu_\varepsilon)\to0$.

\bigskip

The preceding SPK degeneration gives a stability counterexample. The
computation is explicit because all projected marginals are one-dimensional
Gaussians.
Let
\[
 S=\begin{pmatrix}0&1\\1&0\end{pmatrix},
 \qquad
 T_\delta(x)=(I+\delta S)x.
\]
For $|\delta|<1$, the matrix $I+\delta S$ is positive definite, so
\[
 T_\delta=\nabla\left(\frac12|x|^2+\delta x_1x_2\right)
\]
is a Brenier map. Let $\nu_{\varepsilon,\delta}=(T_\delta)_\#\mu_\varepsilon$. Then
\[
 T_\delta(x)-x=\delta u(x),
 \qquad
 \dist_{L^2(\mu_\varepsilon)}^2(T_\delta,\cA_2)
 =\delta^2(1+\varepsilon^2).
\]
Fix $\theta=(\cos t,\sin t)$ and put
\[
 Z=\theta\cdot X,
 \qquad
 Y=\theta\cdot u(X),
 \qquad X\sim \mu_\varepsilon.
\]
Set
\[
 a=\Var Z=\varepsilon^2\cos^2t+\sin^2t,
 \quad
 c=\Cov(Z,Y)=(1+\varepsilon^2)\sin t\cos t,
 \quad
 q=\Var(Y\mid Z).
\]
Then the variance of $\theta\cdot T_\delta(X)=Z+\delta Y$ is
\[
 b_\delta=a+2\delta c+\delta^2\Var Y.
\]
Since the Wasserstein distance between centered one-dimensional Gaussians of variances $a$ and $b_\delta$ is $(\sqrt{b_\delta}-\sqrt a)^2$, the directional deficit is
\begin{equation}
\label{eq:explicit-gaussian-directional-deficit}
 g_{\theta}(\delta)
 =\delta^2 \Var Y
 -(\sqrt{b_\delta}-\sqrt a)^2.
\end{equation}
Because $a\ge \varepsilon^2>0$, Taylor expansion in $\delta$ is uniform in $t$ for each fixed $\varepsilon$. From \eqref{eq:explicit-gaussian-directional-deficit},
\[
 \lim_{\delta\to0}
 \frac{g_{\theta}(\delta)}{\delta^2}
 =
 \Var Y-\frac{c^2}{a}
 =\Var(Y\mid Z).
\]
The last quantity is exactly the integrand in \eqref{eq:aniso-condvar}. Dominated convergence therefore gives the exact second variation
\begin{equation}
\label{eq:aniso-second-variation}
 \lim_{\delta\to0}
 \frac{\cD(\mu_\varepsilon,\nu_{\varepsilon,\delta})}{\delta^2}
 =
 \cR_{\mu_\varepsilon}(u).
\end{equation}
Consequently, for every fixed $\varepsilon$ and all sufficiently small $\delta$,
\[
 \cD(\mu_\varepsilon,\nu_{\varepsilon,\delta})
 \leq
 2\delta^2 \cR_{\mu_\varepsilon}(u)
 \leq
 2\delta^2\varepsilon.
\]
It follows that
\[
 \frac{\dist_{L^2(\mu_\varepsilon)}^2(T_\delta,\cA_2)}
 {\cD(\mu_\varepsilon,\nu_{\varepsilon,\delta})}
 \geq
 \frac{1+\varepsilon^2}{2\varepsilon}
 \longrightarrow \infty.
\]
Moreover, if $\delta=o(\varepsilon)$, the projected one-dimensional Gaussian maps have Lipschitz constants uniformly bounded. Indeed, their Lipschitz factors are $\sqrt{b_\delta/a}$, and the lower bound $a\ge \varepsilon^2$ gives $b_\delta/a=1+O(\delta/\varepsilon)+O(\delta^2/\varepsilon^2)$ uniformly in $t$. Thus the obstruction persists even when the one-dimensional scale parameter $\Lambda$ is kept bounded.

Therefore no stability theorem of the form
\[
 \dist^2(T,\cA_d)
 \leq C\cD(\mu,\nu)
\]
can hold with $C$ depending only on a Bakry--\'Emery lower curvature bound or on a usual Poincar\'e constant.

\appendix

\section{The spherical tensor estimate}
\label{app:tensor}

We prove the tensor estimate used in Lemma~\ref{lem:tensor-estimate}. The
argument decomposes symmetric tensors into trace components and applies
Schur's lemma to \(O(d)\)-invariant quadratic forms. Related diagonalizations
appear in \cite{Higuchi1987,JamesConstantine1974}. Let
$\mathrm{Sym}^n(\R^d)$ be the space of symmetric $n$-tensors with the
Hilbert--Schmidt inner product. For $A\in\mathrm{Sym}^n(\R^d)$ define
\[
 Q_n(A):=
 \int_{\Sd}(A:\theta^{\otimes n})^2\,\dd\sigma(\theta).
\]
The quadratic form $Q_n$ is invariant under the orthogonal group $O(d)$. The standard irreducible decomposition of symmetric tensors is
\begin{equation}
\label{eq:trace-decomp-app}
 \mathrm{Sym}^n(\R^d)
 =
 \bigoplus_{j=0}^{\lfloor n/2\rfloor}
 I^{\odot j}\odot \mathcal H_{n-2j},
\end{equation}
where $\mathcal H_m$ denotes the trace-free symmetric $m$-tensors, equivalently harmonic homogeneous polynomials of degree $m$, and $\odot$ is normalized so that
\[
 (I^{\odot j}\odot H):\theta^{\otimes n}
 =
 H:\theta^{\otimes m}
 \qquad (m=n-2j,\ |\theta|=1).
\]
By Schur's lemma, $Q_n$ is diagonal on \eqref{eq:trace-decomp-app}. If
\[
 A=I^{\odot j}\odot H,
 \qquad H\in\mathcal H_m,
 \qquad n=m+2j,
\]
then a direct contraction calculation gives the eigenvalue
\begin{equation}
\label{eq:tensor-eigenvalue}
 \frac{Q_n(A)}{\|A\|^2}
 =
 \lambda_{m,j}^{(d)}
 :=
 \frac{(m+2j)!\,\Gamma(d/2)}
 {2^{m+2j}j!\,\Gamma(m+j+d/2)}.
\end{equation}

\medskip
We use two ingredients in this calculation. First, for $H\in\mathcal H_m$,
\begin{equation}
\label{eq:harmonic-sphere-norm}
 \int_{\Sd}(H:\theta^{\otimes m})^2\,\dd\sigma(\theta)
 =
 \frac{m!\Gamma(d/2)}{2^m\Gamma(m+d/2)}\|H\|^2.
\end{equation}
Second, with the above normalization of $I^{\odot j}\odot H$,
\begin{equation}
\label{eq:trace-norm-factor}
 \|I^{\odot j}\odot H\|^2
 =
 \frac{m!\,2^{2j}j!\,\Gamma(m+j+d/2)}
 {(m+2j)!\,\Gamma(m+d/2)}\|H\|^2.
\end{equation}
Dividing \eqref{eq:harmonic-sphere-norm} by \eqref{eq:trace-norm-factor} gives \eqref{eq:tensor-eigenvalue}.
For completeness we recall the normalization calculation. Let
\(h(x)=H:x^{\otimes m}\). Since \(H\) is trace-free, \(h\) is harmonic and
\[
 \mathbb E h(G)^2=m!\|H\|^2,
 \qquad G\sim N(0,I_d).
\]
Writing \(G=R\Theta\), with \(\Theta\sim\sigma\) and
\(\mathbb E R^{2m}=2^m\Gamma(m+d/2)/\Gamma(d/2)\), gives
\[
 \int_{\mathbb S^{d-1}} h(\theta)^2\,d\sigma(\theta)
 =
 \frac{m!\Gamma(d/2)}{2^m\Gamma(m+d/2)}\|H\|^2.
\]

Next set
\[
 A=I^{\odot j}\odot H,
 \qquad
 p_A(x)=A:x^{\otimes(m+2j)}=|x|^{2j}h(x).
\]
Using the Fischer inner product,
\[
 \|A\|^2=\frac1{(m+2j)!}\,p_A(\partial)p_A(x)\big|_{x=0}.
\]
Since \(h\) is harmonic,
\[
 \Delta^j\bigl(|x|^{2j}h(x)\bigr)
 =
 4^j j!\,(m+d/2)_j\,h(x).
\]
Therefore
\[
 \|A\|^2
 =
 \frac{4^j j!(m+d/2)_j}{(m+2j)!}
 h(\partial)h(x)\big|_{x=0}
 =
 \frac{m!\,2^{2j}j!\,\Gamma(m+j+d/2)}
 {(m+2j)!\,\Gamma(m+d/2)}\|H\|^2.
\]

\medskip

The rigid scalar mode in degree two is $(m,j)=(0,1)$; its eigenvalue is $1/d$ and corresponds to the homothetic affine field. It is excluded from the non-rigid part of the Gaussian SPK inequality. The trace-free quadratic mode is $(m,j)=(2,0)$, and
\[
 \lambda_{2,0}^{(d)}
 =
 \frac{2}{d(d+2)}.
\]
For later reference, the fixed-$m$ ratio is
\[
 \frac{\lambda_{m,j+1}^{(d)}}{\lambda_{m,j}^{(d)}}
 =
 \frac{(m+2j+2)(m+2j+1)}{4(j+1)(m+j+d/2)}.
\]
To locate the maximum in each fixed degree $n$, however, it is convenient to
write $m=n-2j$ and
\[
 \lambda_{n-2j,j}^{(d)}
 =
 \frac{n!\,\Gamma(d/2)}{2^n j!\,\Gamma(n-j+d/2)}.
\]
For fixed $n$, consecutive admissible trace components satisfy
\[
 \frac{\lambda_{n-2(j+1),j+1}^{(d)}}{\lambda_{n-2j,j}^{(d)}}
 =
 \frac{n-j-1+d/2}{j+1}.
\]
If $0\le j<\lfloor n/2\rfloor$ and $d\ge2$, this ratio is at least one. Thus the largest component in degree $n$ is the most traced one: $(m,j)=(0,n/2)$ for even $n$ and $(m,j)=(1,(n-1)/2)$ for odd $n$.

Along the even branch $n=2q$,
\[
 \frac{\lambda_{0,q+1}^{(d)}}{\lambda_{0,q}^{(d)}}
 =
 \frac{2q+1}{2q+d}
 \le1,
\]
so for even $n\ge4$ the largest value occurs at $q=2$, namely
\[
 \lambda_{0,2}^{(d)}=\frac{3}{d(d+2)}.
\]
Along the odd branch $n=2q+1$,
\[
 \frac{\lambda_{1,q+1}^{(d)}}{\lambda_{1,q}^{(d)}}
 =
 \frac{2q+3}{2q+2+d}
 \le1,
\]
so for odd $n\ge3$ the largest value occurs at $q=1$, namely
\[
 \lambda_{1,1}^{(d)}=\frac{3}{d(d+2)}.
\]
Consequently all components with $n\ge3$ have eigenvalue at most $3/[d(d+2)]$, with equality attained by $(m,j)=(1,1)$ in degree $3$ and by $(m,j)=(0,2)$ in degree $4$. Together with the trace-free $n=2$ value $2/[d(d+2)]$, this gives
\[
 Q_n(A)
 \leq
 \frac{3}{d(d+2)}\|A\|^2
\]
for every $n\ge3$, and for $n=2$ on the trace-free subspace. This proves Lemma~\ref{lem:tensor-estimate}.

\section{Grassmannian projection moments}\label{app:grassmannian}

In this appendix we give the computation used in
Lemma~\ref{lem:grassmannian_algebra}. The argument uses the irreducible
decomposition of symmetric two-tensors under the orthogonal group. The
remaining scalar is fixed by evaluating one trace-free test matrix. Related
formulae for Grassmannians may be found in
James--Constantine~\cite{JamesConstantine1974}.

Let \(G_{d,k}\) be the Grassmannian of \(k\)-dimensional linear subspaces
of \(\mathbb R^d\), equipped with its \(O(d)\)-invariant probability
measure \(\pi_{d,k}\). For \(E\in G_{d,k}\), let \(P_E\) denote the
orthogonal projection onto \(E\). For a symmetric \(d\times d\) matrix
\(M\), set
\[
Q(M):=\int_{G_{d,k}}\|P_E M(I-P_E)\|_{\mathrm{HS}}^2\,\dd\pi_{d,k}(E).
\]

\paragraph{Step 1: invariant reduction.}
For every \(R\in O(d)\), one has \(P_{RE}=RP_E R^{\mathsf T}\). Since
\(\pi_{d,k}\) is \(O(d)\)-invariant, it follows that
\[
Q(RMR^{\mathsf T})=Q(M).
\]
Thus \(Q\) is an \(O(d)\)-invariant quadratic form on the space
\(S_d\) of real symmetric \(d\times d\) matrices.

Moreover,
\[
Q(\alpha I)=0
\]
for every \(\alpha\in\mathbb R\),   In fact \(Q(M+\alpha I)=Q(M)\) for every \(\alpha\in\mathbb R\), since
\(P_E I(I-P_E)=0\).   Writing
\[
S_d^0:=\{M\in S_d:\operatorname{Tr}M=0\},
\]
we have the \(O(d)\)-orthogonal decomposition
\[
S_d=S_d^0\oplus \mathbb RI.
\]
Since \(Q\) vanishes on the scalar summand, it remains to understand
its restriction to \(S_d^0\).

The \(O(d)\)-representation \(S_d^0\) is irreducible. By Schur's lemma,
or equivalently by the uniqueness of the \(O(d)\)-invariant inner
product on this irreducible representation, the polarization of
\(Q|_{S_d^0}\) must be a scalar multiple of the Hilbert--Schmidt inner
product. Therefore there exists a constant \(C(d,k)\ge 0\) such that
\begin{equation}\label{eq:grassmannian-quadratic-form}
Q(M)=C(d,k)\left\|M-\frac{\operatorname{Tr}M}{d}I\right\|_{\mathrm{HS}}^2
\qquad\text{for all }M\in S_d.
\end{equation}
Since \(1\le k\le d-1\), the constant computed below is positive.
Consequently, if \(Q(M)=0\), then \(M\) is a scalar matrix.

\paragraph{Step 2: evaluation of the constant.}
We evaluate \eqref{eq:grassmannian-quadratic-form} at the normalized
trace-free matrix
\[
M_0:=\frac1{\sqrt2}\operatorname{diag}(1,-1,0,\ldots,0).
\]
Then \(\operatorname{Tr}M_0=0\) and \(\|M_0\|_{\mathrm{HS}}^2=1\), hence
\[
C(d,k)=Q(M_0).
\]

Write \(P_E=(p_{ab})_{a,b=1}^d\). Since \(P_E=P_E^{\mathsf T}=P_E^2\),
we have
\[
\|P_E M_0(I-P_E)\|_{\mathrm{HS}}^2
=\operatorname{Tr}\bigl(P_E M_0^2 P_E\bigr)
-\operatorname{Tr}\bigl(P_E M_0 P_E M_0 P_E\bigr).
\]
Using the fact that \(M_0\) has only the diagonal entries
\(1/\sqrt2\) and \(-1/\sqrt2\), this gives
\begin{equation}\label{eq:grassmannian-expansion}
\|P_E M_0(I-P_E)\|_{\mathrm{HS}}^2
=
\frac12\bigl(p_{11}-p_{11}^2+p_{22}-p_{22}^2\bigr)+p_{12}^2.
\end{equation}

We now compute the required moments. First, by invariance,
\[
\int_{G_{d,k}} P_E\,\dd\pi_{d,k}(E)=\frac{k}{d}I,
\]
and therefore
\begin{equation}\label{eq:grassmannian-first-moment}
\int_{G_{d,k}}p_{ii}\,\dd\pi_{d,k}(E)=\frac{k}{d}.
\end{equation}

For the second moments, \(O(d)\)-invariance implies that there are
constants \(A_{d,k}\) and \(B_{d,k}\) such that
\begin{equation}\label{eq:grassmannian-fourth-tensor}
\int_{G_{d,k}}p_{ab}p_{cd}\,\dd\pi_{d,k}(E)
=
A_{d,k}\delta_{ab}\delta_{cd}
+
B_{d,k}\bigl(\delta_{ac}\delta_{bd}+\delta_{ad}\delta_{bc}\bigr).
\end{equation}
The two constants are determined by the identities
\[
\operatorname{Tr}P_E=k,
\qquad
\operatorname{Tr}(P_E^2)=\operatorname{Tr}P_E=k.
\]
Indeed, summing \eqref{eq:grassmannian-fourth-tensor} over \(a,c\)
with \(b=a\), \(d=c\), we get
\[
k^2
=
\int(\operatorname{Tr}P_E)^2\,\dd\pi_{d,k}(E)
=
d^2A_{d,k}+2dB_{d,k}.
\]
Similarly, summing \eqref{eq:grassmannian-fourth-tensor} over \(a,b\)
with \(c=a\), \(d=b\), we get
\[
k
=
\int\operatorname{Tr}(P_E^2)\,\dd\pi_{d,k}(E)
=
dA_{d,k}+d(d+1)B_{d,k}.
\]
Solving these two equations yields
\[
B_{d,k}
=
\frac{k(d-k)}{d(d-1)(d+2)},
\qquad
A_{d,k}
=
\frac{k((d+1)k-2)}{d(d-1)(d+2)}.
\]
Consequently,
\begin{equation}\label{eq:grassmannian-second-moments}
\int_{G_{d,k}}p_{ii}^2\,\dd\pi_{d,k}(E)
=
A_{d,k}+2B_{d,k}
=
\frac{k(k+2)}{d(d+2)},
\end{equation}
and, for \(i\ne j\),
\begin{equation}\label{eq:grassmannian-offdiag-moment}
\int_{G_{d,k}}p_{ij}^2\,\dd\pi_{d,k}(E)
=
B_{d,k}
=
\frac{k(d-k)}{d(d-1)(d+2)}.
\end{equation}

Inserting
\eqref{eq:grassmannian-first-moment},
\eqref{eq:grassmannian-second-moments}, and
\eqref{eq:grassmannian-offdiag-moment}
into \eqref{eq:grassmannian-expansion}, we obtain
\begin{align*}
C(d,k)
&=
\frac12\cdot 2
\left(
\frac{k}{d}
-
\frac{k(k+2)}{d(d+2)}
\right)
+
\frac{k(d-k)}{d(d-1)(d+2)}
\\
&=
\frac{k(d-k)}{d(d+2)}
+
\frac{k(d-k)}{d(d-1)(d+2)}
\\
&=
\frac{k(d-k)}{(d-1)(d+2)}.
\end{align*}
Therefore
\begin{equation}\label{eq:grassmannian-coefficient}
\int_{G_{d,k}}\|P_E M(I-P_E)\|_{\mathrm{HS}}^2\,\dd\pi_{d,k}(E)
=
\frac{k(d-k)}{(d-1)(d+2)}
\left\|M-\frac{\operatorname{Tr}M}{d}I\right\|_{\mathrm{HS}}^2.
\end{equation}
This proves Lemma~\ref{lem:grassmannian_algebra}.
\addcontentsline{toc}{section}{References}

\providecommand{\href}[2]{#2}

\bigskip

\noindent\textbf{Declaration.} The author declares that he has no conflict of interest and that the manuscript has no associated data. 
During the preparation of this manuscript, the author used Kimi (Moonshot AI) and ChatGPT (OpenAI) for  language polishing and stylistic refinement. 
All mathematical statements remain the responsibility of the author.

\medskip

\noindent \textbf{Funding}:   This  work was supported by  the National key R \& D programs of China (2021YFA1000900, 2021YFA1002200),  National Natural Science Foundation of China  (12201596),  Shandong Provincial Natural Science Foundation (ZR2025QB05) and Taishan Scholars Program of Shandong Province (tsqn202408059).

\end{document}